\documentclass[11pt]{amsart}
\usepackage[margin=1in]{geometry}
\usepackage{comment}
\usepackage{amsmath,amssymb,mathtools,booktabs,array}
\usepackage[T1]{fontenc}
\usepackage{lmodern}
\usepackage[colorlinks=true,linkcolor=blue,citecolor=blue,urlcolor=blue]{hyperref}
\hypersetup{pdftitle={Multiplicity one for unitary similitude Ginzburg--Rallis models over local fields}}
\newtheorem{theorem}{Theorem}[section]
\newtheorem{proposition}[theorem]{Proposition}
\newtheorem{lemma}[theorem]{Lemma}
\theoremstyle{definition}
\newtheorem{definition}[theorem]{Definition}
\theoremstyle{remark}
\newtheorem{remark}[theorem]{Remark}
\newcommand{\R}{\mathbb R}
\newcommand{\C}{\mathbb C}
\newcommand{\HH}{\mathcal H}
\newcommand{\g}{\mathfrak g}
\newcommand{\s}{\mathfrak s}
\newcommand{\uu}{\mathfrak u}
\newcommand{\mm}{\mathfrak m}
\newcommand{\kk}{\mathfrak k}
\newcommand{\cgen}{\mathcal C^{-\infty}}
\newcommand{\ctemp}{\mathcal C^{-\xi}}
\newcommand{\Tr}{\operatorname{Tr}_{E/F}}
\DeclareMathOperator{\GU}{GU}
\DeclareMathOperator{\GL}{GL}
\DeclareMathOperator{\Hom}{Hom}
\DeclareMathOperator{\tr}{tr}
\DeclareMathOperator{\Ree}{Re}
\DeclareMathOperator{\Imm}{Im}
\DeclareMathOperator{\diag}{diag}
\DeclareMathOperator{\rank}{rank}
\DeclareMathOperator{\Ad}{Ad}
\DeclareMathOperator{\Lie}{Lie}
\DeclareMathOperator{\supp}{supp}
\DeclareMathOperator{\SO}{SO}
\DeclareMathOperator{\OO}{O}
\title[Unitary Ginzburg--Rallis multiplicity one]{Multiplicity one for unitary similitude\ Ginzburg--Rallis models over local fields}
\author{Yuxuan Ge, Sagnik Mukherjee, and Junyi Zhai}
\date{}
\numberwithin{equation}{section}
\begin{document}
\begin{abstract}
We prove multiplicity at most one for the unitary similitude
Ginzburg--Rallis model over local fields of characteristic zero,
for both the quasi-split and non-quasi-split groups. The theorem
holds for all irreducible smooth representations over
non-Archimedean fields and all irreducible Casselman--Wallach
representations over the real field. We first describe the
decomposition into parabolic double cosets and compute the
required stabilizers.
Over a non-Archimedean field we use character obstructions,
localization, and transpose symmetry. Over the real field we use
a Casimir identity and explicit normal vectors to prove the
corresponding statement for distributions with prescribed
infinitesimal character. The Gelfand--Kazhdan criterion and a
dualizing involution then give the multiplicity bound.
\end{abstract}
\maketitle
\tableofcontents

\clearpage
\section{Introduction}\label{sec:introduction}

Multiplicity one problems concern the uniqueness of equivariant
linear functionals on irreducible admissible representations.
For a reductive group $G_0$ over a local field $F$ of
characteristic zero, a closed subgroup $J\subset G_0(F)$, and a
smooth character $\theta$ of $J$, the question is whether
\[
 \dim\Hom_J(\pi,\theta)\leq 1
\]
for every irreducible admissible representation $\pi$ of $G_0(F)$.
Here $\Hom_J(\pi,\theta)$ consists of the linear functionals
$\ell$ satisfying $\ell(\pi(j)v)=\theta(j)\ell(v)$ for all
$j\in J$ and $v\in V_\pi$.\footnote{In the Archimedean case
we work in the Casselman--Wallach category and require the
functionals to be continuous.}
Such uniqueness is a basic input in factoring automorphic periods
into local functionals and relating them to special values of
automorphic $L$-functions. This connection motivates the study of
multiplicity one for a range of local models. One such example
is the
Ginzburg--Rallis model for $\GL_6$, introduced in~\cite[Section~4]{GR} to
relate period integrals to the nonvanishing of central values of
the exterior cube $L$-function. Its defining subgroup combines
the unipotent radical of the parabolic of type $(2,2,2)$ with
the diagonal copy of $\GL_2$ stabilizing a generic character
of that radical. The corresponding unitary similitude model has
a similar structure. We give the precise
definitions of the subgroup and its character $\theta$ in
Section~\ref{subsec:main-results}. Its expected relation to
exterior cube central values is discussed
in~\cite[Remark~1.5]{WZunitary}. In this paper we prove
multiplicity one for this unitary similitude model attached to
a quadratic field extension of local fields of characteristic
zero, for both the quasi-split and the non-quasi-split group.

To place this theorem in context, we first recall the conjectures
relating Ginzburg--Rallis periods to central $L$-values and the
local multiplicity results they motivate, beginning with the
general linear model and then passing to its unitary analogues.

\subsection{Related work}

The Ginzburg--Rallis model was introduced
in~\cite[Section~4]{GR} to relate the nonvanishing of central
exterior cube $L$-values to automorphic periods. The resulting
conjecture, restated in~\cite[Conjecture~1.1]{JSZ}, predicts
that, under the prescribed central-character condition, a
partial twisted exterior cube $L$-value at $s=\tfrac12$ is
nonzero precisely when the corresponding period is nonzero
on a unique quaternionic inner form, including the split form.
A first local step toward this global problem is the uniqueness
of the corresponding equivariant functional, formulated
in~\cite[Conjecture~1.2]{JSZ}.

The local uniqueness statement is now known over
all local fields of characteristic zero. The non-Archimedean
quaternionic case is proved in~\cite{NienThesis}, the
Archimedean cases for both forms in~\cite[Theorem~1.3]{JSZ},
and the split non-Archimedean case in~\cite[Theorem~1.1]{JLX}.
The distributional arguments in the latter two papers provide
the main methodological precedents for our proofs.

Multiplicity formulas then determine when the unique functional
exists. The multiplicity formula proved for supercuspidal
representations over non-Archimedean fields
in~\cite[Theorem~1.3]{WanGeometric} is extended
in~\cite[Theorem~1.2]{WanTempered} to every irreducible tempered
representation over non-Archimedean fields and $\R$ with the
prescribed compatible central character:
\[
 m(\pi)+m(\pi_D)=1.
\]
Here $\pi_D$ is the Jacquet--Langlands transfer to $\GL_3(D)$,
where $D$ is the quaternion division algebra, and its multiplicity
is interpreted as zero when no transfer exists. This is the
tempered case of the local conjecture
in~\cite[Conjecture~4.6]{Jiang2008}, formulated for generic
representations in~\cite[Conjecture~1.1]{WanTempered}.
The proof uses a local relative trace formula in the
non-Archimedean case and parabolic induction and trilinear models
over $\R$~\cite[Section~6.2]{WanTempered}. Beyond the tempered
case, the identity holds for generic representations when
$\pi_D\ne0$~\cite[Theorem~1.2]{WanGeneric}; the full generic
statement in~\cite[Theorem~1.7 and Appendix~B]{WanGeneric}
assumes holomorphic continuation of generalized Jacquet
integrals. The generic results over $\C$
in~\cite[Theorem~1.4]{WanComplex} hold under the stated
conditions on the inducing parabolic and trilinear data.

The link with exterior cube $L$-values also leads to the unitary
similitude model considered here, through the integral
representation in~\cite[Propositions~2.2 and~3.3]{ZhangExterior}.
The direct local predecessors of our work are the tempered
multiplicity formulas for the unitary similitude and unitary
Ginzburg--Rallis models over non-Archimedean fields
in~\cite[Theorem~4.9 and Proposition~5.5]{WZunitary}.
The total multiplicity of a tempered Vogan packet is one for
the similitude model, assuming the packet classification
in~\cite[Conjecture~2.5 and Theorem~1.1]{WZunitary}, whereas
the unitary packet total is two~\cite[Theorem~1.2]{WZunitary}.
Uniqueness for all irreducible smooth representations of the
similitude group is stated as an unnumbered conjecture
in~\cite[Remark~1.4]{WZunitary}.

The unitary similitude model also belongs to the family of ten
strongly tempered spherical varieties in~\cite[Table~1]{WZ},
which includes the general linear Ginzburg--Rallis model.
In this broader setting, geometric multiplicity formulas for
all tempered representations are predicted
in~\cite[Conjecture~7.4(3)]{WanSpherical}.
Tempered multiplicity formulas are
proved for Models~1--9 over non-Archimedean fields and $\C$,
and for Models~1--4 over $\R$~\cite[Section~1.2 and Chapter~9]{WZ}.
A description of multiplicities using epsilon factors is proposed
in~\cite[Conjecture~2.10]{WZConjecture} for strongly tempered
spherical varieties without type $N$ roots.
For the split forms of Models~6--10, the recent
preprint~\cite[Theorem~1.1]{GeStronglyTempered} proves
multiplicity one for all irreducible smooth admissible
representations over non-Archimedean fields of characteristic
zero. The present paper treats Model~5, the unitary similitude
Ginzburg--Rallis model, for both Hermitian forms over
non-Archimedean fields and $\R$, without a temperedness
assumption.

\subsection{Main results}\label{subsec:main-results}

To state the theorem for both forms at once, we describe the
subgroup and character in terms of a Hermitian plane. The notations we use in this subsection, are defined with more details in Section \ref{sec:model}. Let $E/F$
be a quadratic
field extension of local fields of characteristic zero, and fix a
nontrivial unitary additive character $\psi$ of $F$. Let $H$ be a
nondegenerate Hermitian matrix of rank two, either hyperbolic or
anisotropic. With $w_3$ denoting the anti-diagonal permutation
matrix, put
\[
 W=w_3\otimes H,\qquad G=\GU(W)(F),\qquad K=\GU(H)(F).
\]
Here $\GU$ denotes the unitary similitude group defined in
Section~\ref{sec:model}.

The unipotent part of the model comes from the parabolic subgroup
$P=MU$ stabilizing a two-dimensional totally isotropic subspace, whose Levi factor is
$M\simeq\GL_2(E)\times K$. In the block coordinates of
Section~\ref{sec:model}, write $X$ for the first adjacent block of
$u(X,Y)\in U$ and set
\[
 \xi(u(X,Y))=\psi\bigl(\operatorname{Tr}_{E/F}(\tr X)\bigr).
\]
This character determines the reductive part as well: its
stabilizer in $M$ is
$D(K)=\{\diag(k,k,k):k\in K\}$. Thus the model subgroup is
$S=D(K)U$. To include the determinant and similitude twists, take
smooth multiplicative characters $\chi_E$ of $E^\times$ and
$\chi_F$ of $F^\times$ and define
\[
 \eta(D(k)u)=\chi_E(\det k)\chi_F(\lambda(k))\xi(u),
\]
where $\lambda$ is the similitude character. In the notation of
the opening paragraph, we have $G_0(F)=G$, $J=S$, and
$\theta=\eta$. With these data, our main result is the following.

\begin{theorem}\label{thm:main}
Suppose that $H$ is either hyperbolic or anisotropic. If $F$ is
non-Archimedean, let $\pi$ be an irreducible smooth complex
representation of $G$. If $F=\R$, let $\pi$ be an irreducible
Casselman--Wallach representation of $G$. Then
\[
                         \dim\Hom_S(\pi,\eta)\leq1.
\]
\end{theorem}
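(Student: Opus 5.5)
The plan follows the Gelfand--Kazhdan method as adapted in \cite{JSZ} and \cite{JLX}. First I fix an anti-holomorphic involution $\tau$ of $G$: transpose followed by conjugation by a fixed element, composed with Galois conjugation if necessary. I choose it so that it preserves $S$, satisfies $\eta\circ\tau=\eta$ on $S$, and has the property that $\pi\circ\tau\simeq\pi^\vee$ for every irreducible $\pi$. This last property is the dualizing involution: over $p$-adic fields it comes from the MVW involution of unitary similitude groups, and over $\R$ the analogous statement holds for Casselman--Wallach representations. With $\tau$ in hand, the Gelfand--Kazhdan criterion reduces Theorem~\ref{thm:main} to a statement about distributions. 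Namely, every distribution $T$ on $G$ (generalized functions $\cgen$ in the real case, possibly restricted to those with prescribed infinitesimal character) satisfying
\[
T(s_1 g s_2)=\eta(s_1)\eta(s_2)T(g)\quad (s_1,s_2\in S)
\]
must satisfy $T\circ\tau=T$.

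The geometric core is the double coset decomposition $P\backslash G/P$, which is indexed by a small relative Weyl set because $H$ has rank two; for anisotropic $H$ it is essentially the Bruhat decomposition for the rank-one relative root system. I refine this to $S$-orbits. On each cell $PwP$, the orbit of $S\times S$ is analyzed through the stabilizer $M\cap wMw^{-1}$ acting on the $M$-quotient, and I compute the stabilizers explicitly. The key point is that the open dense $S\times S$-orbits (or families) can be chosen $\tau$-stable pointwise. On every remaining orbit I want one of two things: either $\eta\otimes\eta$, twisted by the modular character, is nontrivial on the stabilizer, so no invariant distribution lives there; or the orbit is itself $\tau$-fixed.

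Over non-Archimedean $F$, I argue by Bernstein localization. Suppose $T-T\circ\tau$ is nonzero. Its support is then an $S\times S$-stable closed set avoiding the good open orbits. Stratifying by the cells, I use character obstructions, namely nontriviality of $\xi$ on $U\cap wUw^{-1}$ conjugated appropriately, to kill most strata. On the few strata carrying invariant distributions, I use transpose symmetry: the distribution restricted to the transversal slice is $\tau$-invariant by a smaller-rank Gelfand pair argument for $D(K)$ inside $M$. Over $\R$, localization fails, so I follow \cite{JSZ}. I restrict to distributions that are eigen for the center of $U(\g)$ and take the transversal derivatives along each singular orbit. The Casimir identity then shows that transversal degrees are bounded. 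Explicit normal vectors on which the Euler-type operator acts by a scalar incompatible with $\eta$-equivariance then force those transversal derivatives to vanish.

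I expect the main obstacle to be the singular strata in the real case for the non-quasi-split (anisotropic) form. There the orbits of $K$ on the normal spaces are compact, so the sign-character tricks are unavailable and the Casimir computation must be done by hand. I would first try the normal vector field given by the semisimple element generating the split center of $M$, and verify its eigenvalues on the conormal bundle.
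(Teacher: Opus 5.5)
Your outer framework matches the paper's: the Gelfand--Kazhdan criterion with a dualizing anti-involution, $P$-double cosets, character obstructions, and the Casimir equation over $\R$. But the principle you apply inside each cell is false for this model, and the real-case boundary mechanism you propose cannot start.

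\textbf{The orbit dichotomy fails.} You want every non-generic orbit either to carry a stabilizer element with nontrivial character or to be $\tau$-stable.
\begin{itemize}
\item \emph{Open cell.} Restriction to the slice $\{L_z\}$ turns an odd distribution into a $K$-conjugation-invariant, $\tau_2$-odd distribution on $\GL_2(E)$. The $\HH$-stabilizer of $L_z$ is $\iota(K_z)$, on which $\widetilde\chi$ is trivial. For $H=J$ one has $K=E^\times P_0\GL_2(F)P_0^{-1}$ and $z=P_0(A+\delta B)P_0^{-1}$. So $K$-orbits are simultaneous $\GL_2(F)$-orbits of pairs $(A,B)$, and $\tau_2$ becomes $(A,B)\mapsto(A^t,B^t)$ up to conjugation. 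Take $A=\diag(1,2)$, $B=e_{12}$. Their unique common eigenline carries the eigenvalue pair $(1,0)$, whereas for $(A^t,B^t)$ it carries $(2,0)$. Hence $\tau$ moves this orbit, and neither of your alternatives holds.
\item \emph{The layer $C\setminus C_0\subset G_{3221}$.} The normal form reduces it to $F^\times$ acting on $F^2$ by $(tx,t^{-1}y)$, with trivial character, and $\tau$ swaps the two punctured axes. The odd measure $d^\times x-d^\times y$ lives on their union. Only a computation at the origin excludes it: the even combination extends invariantly across $0$, the odd one does not.
\item \emph{Stable orbits.} Even where an orbit is $\tau$-stable, you still need a transporter $t\in\HH$ with $t\cdot x=\tau(x)$ and $\chi(t)=1$, or a $\tau$-fixed point. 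Stability alone does not determine the sign.
\end{itemize}
What is missing is a genuinely non-orbitwise step. The paper uses Bernstein localization along $\det$ and along $xy$, Prasad's invariance theorem for $\GL_2(F)$-conjugation on $\GL_2(F)^2$ (after the Hilbert~90 identification of $K$), and his lemma on the two axes~\cite{Prasad}. Your ``smaller-rank Gelfand pair argument'' would have to consist of exactly these statements, and you do not identify them.

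\textbf{The real case lacks both key inputs.}
\begin{itemize}
\item \emph{Open cell.} Stability of the generic orbits says nothing about non-closed $K$-conjugacy classes in $\GL_2(\C)$ like the ones above, or about normal derivatives along them. For $H=J$ the paper uses~\cite[Proposition~1.5]{JSZ}: $\SO(V)$-invariant tempered generalized functions on $V^2$, with $V=\mathfrak{sl}_2(\R)$, are $\OO(V)$-invariant. This is combined with tempered localization along $\det_\C$. For $H=I_2$ it averages over the compact group $K/E^\times$. The cited input is stated for tempered generalized functions, which is why temperedness must be kept.
\item \emph{Why your boundary mechanism fails.} The Euler-type argument you suggest has nothing to act with. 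The split center of $M$ is not in $\HH$, since $S\cap M=D(K)$. Any vector field generated by $\mm$ through translation or conjugation is tangent to every $P$-double coset (compare the first half of Lemma~\ref{lem:casimir-transverse}). For $H=I_2$ every split torus of $\HH$ acts through central scalars, so no stabilizer element rescales normal directions.
\item \emph{The missing idea.} Right $\tau(U)$-equivariance with the \emph{generic} character $\xi$ turns $\Omega f=cf$ into $(2i\kappa\mathcal X+D_M^L-c)f=0$, where $\mathcal X=X_0^L$ and $X_0=N(I_2,0)$ (Lemma~\ref{lem:casimir}). The coefficient $2i\kappa\ne0$ comes from $\psi$. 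The field $\mathcal X$ is transverse to $G_{3111}$, $G_{2110}$ and $G_{2000}$, while $D_M^L$ is tangential. So Lemma~\ref{lem:transverse} kills normal derivatives of every order, with no need to bound transversal degrees.
\item \emph{Where the difficulty actually is.} The transverse-field argument disposes of the anisotropic boundary at once, so that case is the easy one. The hard stratum is the quasi-split $G_{3221}$, where $\mathcal X$ is not transverse; it is tangent at $w=(1\ 4)(3\ 6)$. There one needs the filtration $T\supset C\supset C_0\supset Z$, whose layers are alternately unipotently $\chi$-incompatible and metrically proper. Metric properness is detected by the explicit normal vectors $Q_y$ and $Q_Z$ of nonzero length.
\end{itemize}

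Two smaller corrections:
\begin{itemize}
\item For anisotropic $H$ the Witt index is $2$, so the relative rank is two, not one.
\item The anti-involution must be $\C$-linear (weighted) transpose. Composing with Galois conjugation gives $g\mapsto\lambda(g)Wg^{-1}W^{-1}$, whose associated automorphism is inner up to a central twist and does not dualize.
\end{itemize}
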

Notice that, in the real case the functionals are continuous. Also, no temperedness
assumption is imposed on $\pi$, and the multiplicative characters
$\chi_E$ and $\chi_F$ need not be unitary.

The two choices of $H$ give the quasi-split and non-quasi-split
Hermitian groups; $E$ remains a field in both cases. The
non-Archimedean assertion has no restriction on the residue
characteristic. In the Archimedean case a quadratic field extension
is necessarily $\C/\R$, and the theorem concerns the full groups
$\GU(3,3)$ and $\GU(4,2)$. In particular, $\GU(3,3)$ includes
the component with negative similitude multiplier.

We deduce Theorem~\ref{thm:main} from the distributional symmetry
stated in Theorem~\ref{thm:distribution} below, by applying the
Gelfand--Kazhdan criterion together with the contragredient
identification in Lemma~\ref{lem:dual}. To formulate this
symmetry, let $\tau$ be the anti-involution in
\eqref{eq:tau-common}: ordinary transpose in the quasi-split
coordinates and weighted transpose in the non-quasi-split
coordinates. The relevant equivariance under left and right
translation is encoded by
\[
 \HH=S\times S,\qquad (s_1,s_2)\cdot g=s_1g\tau(s_2),
 \qquad \chi=\eta\boxtimes\eta.
\]
In the real case, our symmetry statement also imposes an
eigenvalue condition for the Casimir operator $\Omega$ associated
with the trace form $B(X,Y)=\Ree\tr(XY)$ on $\Lie(G)$.
The distribution spaces and this operator are defined in
Section~\ref{sec:tools}.

\begin{theorem}\label{thm:distribution}
If $F$ is non-Archimedean, every $(\HH,\chi)$-equivariant
distribution on $G$ is $\tau$-invariant. If $F=\R$, every
$(\HH,\chi)$-equivariant tempered generalized function $f$ on
$G$ satisfying $\Omega f=cf$ for some $c\in\C$ is
$\tau$-invariant.
\end{theorem}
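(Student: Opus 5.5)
We follow the standard Gelfand--Kazhdan strategy, as in the $\GL_6$ treatments. Let $f$ be an $(\HH,\chi)$-equivariant distribution (or, over $\R$, a tempered generalized function with $\Omega f=cf$). Put $g=f-\tau(f)$. Since $\tau(S)=S$ and $\eta\circ\tau=\eta$, the function $g$ is again $(\HH,\chi)$-equivariant, and $\tau(g)=-g$. Over $\R$ it also satisfies $\Omega g=cg$, because $\tau$ preserves $B$ and hence commutes with $\Omega$. So it suffices to prove that such a $g$ vanishes.

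\textbf{Step 1: stratification.} We use the decomposition of $G$ into double cosets $P\backslash G/\tau(P)$, equivalently $S\backslash G/\tau(S)$ after refining by $M$-orbits. This is a finite union of locally closed pieces $PwP$, with $w$ running over a set of Weyl-type representatives relative to the isotropic subspace; in the anisotropic case there are fewer pieces. On each piece we then refine according to the $D(K)\times D(K)$-orbits on the Levi part.

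\textbf{Step 2: orbits carrying no distribution.} For each stratum, we compute the stabilizer $\HH_x$ of a representative $x$. When $\chi|_{\HH_x}$ is nontrivial, or more precisely differs from the modular character twisted by the normal-bundle characters, Bernstein localization kills the distribution on that orbit. These stabilizers were computed earlier. In most strata, some unipotent element $u(X,0)\in U$ with $\Tr(\tr X)\ne0$ becomes conjugate to an element of the other copy of $U$ on which $\xi$ is trivial, and this gives the character obstruction.

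\textbf{Step 3: $\tau$-stable orbits.} On the remaining orbits (essentially the open cell and the cell through $D(K)$-type elements) the representatives can be chosen with $\tau(x)=x$, or with $x$ in the same $\HH$-orbit as $\tau(x)$. Localizing near such an orbit reduces the problem, through a slice, to a distribution on a smaller symmetric pair, namely $(K\times K)$ acting on $K$ or on a Lie algebra slice. There $\tau$-invariance follows from the known result for $\GU(H)$: the classical fact that transpose fixes conjugacy classes in $\GU_2$.

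Over $\R$, Bernstein localization alone is insufficient. Wherever transversal derivatives can survive, we argue instead through the order of transversal derivatives. The Casimir identity $\Omega g=cg$, written in coordinates adapted to an orbit, shows that the highest transversal term is annihilated by a second-order operator built from explicit normal vectors. This operator acts on the leading part by a nonzero scalar, homogeneous in the transversal degree, and that forces the leading part to vanish. Finally, the inductive descent on strata, ordered by dimension, gives $g=0$.

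\textbf{Main obstacle.} The hardest step is Step 3 together with the Archimedean transversal analysis near the non-open $\tau$-stable orbits of dimension gap at least two. There the character obstruction fails, and one must both choose the normal vectors explicitly and verify that the eigenvalues of the relevant Euler-type operators avoid the values allowed by $\chi$. I would first try the normal directions given by the $\s$-part of $\Lie(G)$ under $\tau$, computing $\Omega$ restricted to these directions. If the resulting eigenvalue degenerates, the fallback is to use the transversal Fourier transform together with the fact that $\xi$ is generic.
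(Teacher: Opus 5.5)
There is a genuine gap in Step~3. You assume that wherever the character obstruction fails, every orbit is $\tau$-stable, so that an orbitwise argument plus the ``classical'' conjugacy fact in $\GU_2$ finishes. Even on a $\tau$-stable orbit you would need a transporter $t$ with $t\cdot x=\tau(x)$ and $\chi(t)=1$; a priori one only gets $\chi(t)=\pm1$. More seriously, the $\tau$-stability itself fails in two places, and the paper needs genuinely distributional inputs there.

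\textbf{The open cell.} The reduced problem is not $K\times K$ acting on $K$. The Gauss decomposition and the slice $\{L_z\}$, with $z=ak^{-1}$, leave $K$ acting by conjugation on all of $\GL_2(E)$. For hyperbolic $H$, write $K=E^\times P_0\GL_2(F)P_0^{-1}$ and $z=P_0(A+\delta B)P_0^{-1}$. The problem becomes simultaneous $\GL_2(F)$-conjugation of pairs $(A,B)\in M_2(F)^2$, compared with $(A,B)\mapsto(A^t,B^t)$. Transpose does not preserve all orbits. For example, $(e_{12},\diag(1,-1))$ is not conjugate to $(e_{21},\diag(1,-1))$: a conjugator commutes with $\diag(1,-1)$, hence is diagonal, and then only rescales $e_{12}$. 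The corresponding $z$ has determinant $-\delta^2\neq0$, so it lies in $\GL_2(E)$. Thus odd invariant functionals exist orbitwise on unions of swapped orbits. To kill them the paper uses \cite[Proposition~4.5]{Prasad}, with scalar translation and localization along $\det_E$, over $p$-adic $F$. Over $\R$ it uses the $\SO(V)$-versus-$\OO(V)$ theorem \cite[Proposition~1.5]{JSZ} on $\mathfrak{sl}_2(\R)^2$. Only for anisotropic $H$ does an orbitwise route work, and even then it needs two things: an explicit proof that $z$ and $\tau_2(z)$ are $K$-conjugate for every $z\in\GL_2(E)$, and averaging over the compact group $K/E^\times$.

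\textbf{The piece $C\setminus C_0\subset G_{3221}$.} Here the normal form reduces the problem to $t\cdot(x,y)=(tx,t^{-1}y)$ on $F^2$, with $\tau$ the swap. Transpose exchanges the two punctured axes, which are distinct orbits with trivial stabilizer character. Only localization along $xy$ together with the test-function computation in Lemma~\ref{lem:padic-plane} kills the odd part.

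\textbf{The real boundary.} Over $\R$ your boundary step is not yet an argument. For $G_{3111}$, $G_{2110}$, $G_{2000}$ the paper uses right $U^t$-equivariance by the generic character to turn $\Omega$ into $2i\kappa\mathcal X+D_M^L$. Here $\mathcal X$ is a first-order vector field transverse to these strata and $D_M^L$ is tangential, so Lemma~\ref{lem:transverse} applies; $\kappa\neq0$ is exactly where genericity enters. On $G_{3221}$ one needs two further ingredients:
\begin{itemize}
\item smooth unipotent stabilizer families with nontrivial character on $T\setminus C$ and $C_0\setminus Z$;
\item explicit normal vectors of nonzero length, $Q_y$ and $Q_Z$, on $C\setminus C_0$ and $Z$, giving metric properness.
\end{itemize}
Your ``second-order operator acting on the leading part by a nonzero scalar'' and the Fourier-transform fallback identify neither mechanism. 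Producing those normal vectors is the actual content of that step.
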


In the real case, the generalized matrix coefficients to which
the Gelfand--Kazhdan criterion~\cite[Proposition~10.1]{JSZ}
is applied are tempered generalized functions and joint
eigenvectors for the bi-invariant differential operators. They
therefore satisfy the Casimir eigenvalue condition in
Theorem~\ref{thm:distribution}. Here temperedness concerns these
functions and imposes no temperedness assumption on the
representations in Theorem~\ref{thm:main}. The next subsection
outlines the proof of Theorem~\ref{thm:distribution} and the
deduction of Theorem~\ref{thm:main}.

\subsection{Outline of the proof}

We prove the Theorem \ref{thm:distribution} by a vanishing argument. The
decomposition of $G$ into $P$--$\tau(P)$ double cosets allows us
to prove vanishing on successively larger open subsets, beginning
with the open cell. Once vanishing there is established,
the orbits on the boundary requires different arguments over non-Archimedean
fields and over $\R$; these are the two branches described in
Steps~4 and~5. After either branch, the final step deduces
multiplicity one from the resulting distributional symmetry.

\paragraph{\textbf{1. Reduction to odd equivariant distributions.}}
The failure of transpose symmetry is measured by the difference
$f^-=f-f\circ\tau$ for an equivariant distribution $f$. Extend the
$\HH$-action to $\widetilde\HH=\HH\rtimes\langle\tau\rangle$,
where $\tau$ acts on $\HH$ by interchanging its two factors,
and extend $\chi$ to $\widetilde\chi$ by setting
$\widetilde\chi(\tau)=-1$, as in Section~\ref{sec:model}.
Then $f^-$ is $(\widetilde\HH,\widetilde\chi)$-equivariant.
We call such distributions odd. It suffices to prove that every
odd distribution under consideration vanishes. Over $\R$, the
Casimir commutes with $\tau$, so passing to $f^-$ preserves the
equation $\Omega f=cf$. In the remaining steps, $f$ denotes
an odd distribution in the required category.

\paragraph{\textbf{2. Parabolic double coset decomposition.}}
We control the support of an odd distribution by decomposing $G$
into finitely many $P$--$\tau(P)$ double cosets. By
Lemma~\ref{lem:rank-common}, these double cosets are smooth and
are classified by the ranks of the lower-right block submatrices
of sizes $4\times4$, $4\times2$, $2\times4$, and $2\times2$.
Writing $G_{abcd}$ for the double coset with these four ranks
equal to $a,b,c,d$, respectively, we obtain
\[
 \begin{array}{c|c}
 \text{hyperbolic }H&4222,\ 3221,\ 3111,\ 2110,\ 2000\\
 \text{anisotropic }H&4222,\ 3111,\ 2000.
 \end{array}
\]
The classification uses the dimension of the intersection of
two isotropic planes and the rank of the Hermitian pairing
between them. Witt extension then shows that these invariants
classify the double cosets over $F$. The open double coset is $O=G_{4222}=UM\tau(U)$.
Each parabolic double coset can contain many $\HH$-orbits,
whose stabilizers enter the subsequent vanishing arguments.

The rank conditions also give an increasing sequence of
invariant open subsets, starting with $O$. Once a distribution
vanishes on one of these open subsets, its restriction to the
next is supported on a single smooth stratum. This provides the
induction used in the boundary arguments. The open-cell
calculation supplies its starting point.

\paragraph{\textbf{3. The open cell.}}
On $O$, the Gauss decomposition first removes the two equal
unipotent character factors. Descent on
$M\simeq\GL_2(E)\times K$, with coordinate $z=ak^{-1}$,
then reduces the problem to $K$-conjugation-invariant
distributions on $\GL_2(E)$ when $F$ is non-Archimedean, and to
$K$-conjugation-invariant tempered generalized functions on
$\GL_2(\C)$ when $F=\R$. In the real case, we start with a
tempered generalized function on $G$, that is, a continuous
linear functional on Schwartz densities. The Gauss isomorphism
and equivariant restriction to the Nash slice preserve
temperedness (Sections~\ref{subsubsec:nash-functions}
and~\ref{subsubsec:real-localization}). For this smaller problem,
the two Hermitian forms lead to different arguments.

For hyperbolic $H$, Hilbert 90 gives $K=E^\times\GL_2(F)$
after conjugation. In the coordinates $Z=A+\delta B$, with
$\delta\in E^\times$ satisfying $\bar\delta=-\delta$, it suffices
to prove that every distribution on $X=M_2(F)^2$ (a tempered
generalized function when $F=\R$) invariant under simultaneous
$\GL_2(F)$-conjugation of $A$ and $B$ is invariant under
$(A,B)\mapsto(A^t,B^t)$. For non-Archimedean $F$, Bernstein
localization (Theorem~\ref{thm:localization}), applied to $\det_E$,
reduces the assertion on $\GL_2(E)$ to that on $X$.
The scalar translates of $Y=\GL_2(F)^2$ form an invariant
open cover of $X$, reducing the assertion to $Y$, where it
follows from~\cite[Proposition~4.5]{Prasad}.
Over $\R$, conjugation on $V=\mathfrak{sl}_2(\R)$, equipped
with the trace form, has image $\SO(V)$. By
\cite[Proposition~1.5]{JSZ}, every $\SO(V)$-invariant tempered
generalized function on $V^2$ is $\mathrm O(V)$-invariant,
hence transpose-invariant. Since both actions fix the scalar
coordinates, this proves the assertion on $X$. Since $\det_{\C}$
is an invariant Nash map and $\C^\times$ is Nash open, the
localization statement~\eqref{eq:real-localization} then gives
the assertion on $\GL_2(\C)$; see~\cite[Lemma~2.8]{JSZ}. 

For anisotropic $H$, we must prove that every
$K$-conjugation-invariant distribution $T$ on $\GL_2(E)$ is
invariant under $\tau_2(z)=H^{-1}z^tH$. Let $\mathcal A\varphi$
denote the average of a test function $\varphi$ over the compact
group $K/E^\times$. Since $T(\varphi)=T(\mathcal A\varphi)$,
it suffices to show $\mathcal A(\varphi\circ\tau_2)=\mathcal A\varphi$.
This holds because averaging commutes with $\tau_2$, while
$\mathcal A\varphi$ is constant on the conjugacy orbits
preserved by $\tau_2$.

Thus odd distributions vanish on $O$ in both settings, and it
remains to eliminate their support on the boundary.

\paragraph{\textbf{4. The boundary for $F$-non-Archimedean.}}
Over a non-Archimedean field, the boundary argument uses
character obstructions and transpose-invariance. Frobenius
reciprocity implies that an orbit carries no nonzero equivariant
distribution if its stabilizer contains a unipotent element on
which the character is nontrivial; see~\cite[\S6.11]{BZ} and
Lemma~\ref{lem:padic-orbits}. On most remaining pieces, explicit
elements carry each point to its transpose with character one.
By~\eqref{eq:closed-support-tools}, a distribution vanishing
outside a closed subset is a distribution on that subset.
This lets us prove vanishing successively along a closed
invariant filtration. However, one slice requires a further
support calculation.

This slice occurs in $T=G_{3221}$ in the quasi-split case.
Lemma~\ref{lem:3221-filtration} gives a closed filtration
$T\supset C\supset C_0\supset Z\supset\varnothing$,
defined by the stabilizer characters.
By Lemma~\ref{lem:padic-orbits}, the nontriviality of $\chi$
on suitable unipotent stabilizers forces every odd equivariant
distribution on $T\setminus C$ or $C_0\setminus Z$ to vanish.
Explicit transpose symmetry gives the same vanishing on $Z$.
By Lemma~\ref{lem:3221-normal-form}, the remaining piece
$C\setminus C_0$ has a normal form with parameters
$(\zeta,x,y)\in E^\times\times F^2$. The induced action is
\[
 (x,y)\longmapsto(tx,t^{-1}y),\quad t\in F^\times,
 \qquad \tau:(x,y)\longmapsto(y,x).
\]
We prove directly that every distribution invariant under this
scaling is invariant under interchange of the coordinates. The
proof uses Bernstein localization (Theorem~\ref{thm:localization})
for the product $xy$, first treating its nonzero fibers and then
the two axes and the origin. Transpose exchanges distinct scaling orbits on
the punctured axes, so this step is not an orbitwise symmetry
argument. This completes the remaining support calculation
in the quasi-split boundary argument.

The non-quasi-split
case has no $3221$ or $2110$ stratum and requires only the
character and transpose calculations on $3111$ and $2000$.
Together with the open-cell result, this proves the
non-Archimedean distribution theorem.

\paragraph{\textbf{5. The boundary for $F=\R$.}}
The argument for $F=\R$ also starts from $f|_O=0$, but a
distribution supported on a stratum may involve derivatives in
directions normal to it. To rule out such support, we use the
condition $\Omega f=cf$ in Theorem~\ref{thm:distribution},
where $\Omega$ is the Casimir differential operator for the
trace form $B(X,Y)=\Ree\tr(XY)$, together with the support
lemmas in Section~\ref{subsec:real-tools}.

We first apply these tools to $T=G_{3221}$ in the quasi-split
case. The restriction of $f$ to the open set $G'=O\cup T$
is supported on $T$. On $T\setminus C$ and $C_0\setminus Z$,
we construct smooth families of unipotent stabilizing elements
on which $\chi$ is nontrivial. Their unipotent action on normal
directions lets Lemma~\ref{lem:real-unipotent} exclude equivariant
distributions supported there, including all normal derivatives.
On $C\setminus C_0$ and $Z$, explicit normal vectors of nonzero
squared length allow Lemma~\ref{lem:metric-support} to exclude
such support using $(\Omega-c)f=0$.
Lemma~\ref{lem:real-filtration} combines these conclusions along
the filtration~\eqref{eq:filtration}, giving $f|_{G'}=0$.

It remains to exclude support on the parabolic strata
$G_{3111}$, $G_{2110}$, and $G_{2000}$.
The Casimir identity~\eqref{eq:casimir} rewrites $\Omega f=cf$ as
\[
                  (2i\kappa\mathcal X+D_M^L-c)f=0,
\]
where $\psi(t)=e^{i\kappa t}$ with $\kappa\ne0$, $\mathcal X$
is a vector field transverse to these strata, and $D_M^L-c$ is a tangential
differential operator. The transverse term raises the highest order of
normal differentiation, whereas the tangential term does not.
Thus Lemma~\ref{lem:transverse} forces vanishing successively
on the larger open sets in~\eqref{eq:open-chain}.
For anisotropic $H$, the boundary consists only of $G_{3111}$
and $G_{2000}$, so this last argument applies directly after
vanishing on $O$.

\paragraph{\textbf{6. From distribution symmetry to multiplicity one.}}
The preceding steps establish transpose symmetry in the
distribution spaces required by the Gelfand--Kazhdan criterion.
Applying the appropriate criterion gives
\[
 \dim\Hom_S(\pi,\eta)\,
 \dim\Hom_{\tau(S)}(\pi^\vee,\eta\circ\alpha)\leq1,
 \qquad \alpha(g)=\tau(g)^{-1}.
\]
To turn this product bound into the desired uniqueness
statement, we show that $\pi^\vee\simeq\pi\circ\alpha$, using
the contragredient identifications in~\cite[Theorem~B]{RV} in the
non-Archimedean case and~\cite[Theorem~1.2 and Corollary~1.4]{Adams}
in the real case. Since
$\alpha(\tau(S))=S$, the two period spaces have the same
dimension. The product bound is thus the square of the desired
multiplicity, which proves Theorem~\ref{thm:main}.

\subsection{Organization of the paper}

The paper follows this separation between common geometry and
the two support arguments. Section~\ref{sec:tools} fixes the
groups, characters, and distribution conventions, states the
support and descent results used in each setting, and identifies
the contragredient. With these tools in place,
Section~\ref{sec:orbits} determines the parabolic double cosets,
proves the open-cell symmetry, and gives the common stabilizer
calculations and normal forms on $G_{3221}$. The resulting calculations are
then used in Section~\ref{sec:padic}, which treats the common
stratum $G_{2000}$, followed by the remaining quasi-split and
non-quasi-split strata, and then deduces the multiplicity bound
from the non-Archimedean distribution theorem.
Section~\ref{sec:real} combines the same geometry with
the Casimir and support arguments to prove the real cases and
explains the role of the full similitude group.

\section{Acknowledgements}
The authors are grateful to their advisor, Prof. Dihua Jiang, for suggesting this problem, for many valuable discussions, and for directing them to relevant references that helped guide its solution.

The authors used ChatGPT Astra and Claude to find references, explore ideas, and obtain limited assistance with \LaTeX{} typesetting and proof checking. Nevertheless, we take full responsibility for all mathematical results, computations, and exposition in this paper.

\section{Notation and background}\label{sec:tools}

We fix one set of block coordinates for both Hermitian forms.
We then state the distribution results used in the proofs. The
non-Archimedean argument uses closed support, localization, and
stabilizers. The Archimedean argument also requires differential
operators to control normal derivatives. The final paragraph
identifies the contragredient, which completes the multiplicity
argument in each case.

\subsection{Notation}\label{sec:model}

Throughout, all unlabelled ranks are over $E$. Write
$E^-=\{z\in E:\bar z=-z\}$ and $N(z)=z\bar z$.
We use $e_{ij}$ for $2\times2$ matrix units and $E_{ij}$ for
$6\times6$ matrix units. Write $a^\dagger=\bar a^t$, let $w_n$ be the anti-diagonal
permutation matrix, and set $J=w_2$. Choose
\[
 H=J\quad\hbox{in the quasi-split case},\qquad
 H=\diag(1,d),\quad-d\notin N_{E/F}(E^\times)
       \quad\hbox{in the anisotropic case}.
\]
Over $\R$ the anisotropic choice will always be $H=I_2$.
For a nondegenerate Hermitian matrix $Q$ of size $n$, write
$\GU(Q)$ for its unitary similitude group, so that
\[
 \GU(Q)(F)=\{g\in\GL_n(E):g^\dagger Qg=\lambda(g)Q
                  \text{ for some }\lambda(g)\in F^\times\}.
\]
The scalar $\lambda(g)$ is called the \emph{similitude factor}
of $g$; the map $\lambda:\GU(Q)(F)\to F^\times$ is the
similitude character. Put
\[
 W=w_3\otimes H,\qquad G=\GU(W)(F),\qquad K=\GU(H)(F).
\]
For $a\in\GL_2(E)$ set $a^*=H^{-1}(a^\dagger)^{-1}H$,
the inverse of its adjoint with respect to $H$.
Put $A_0=\langle e_1,e_2\rangle$ and $B_0=\langle e_5,e_6\rangle$.
The parabolic subgroup stabilizing $A_0$ is $P=MU$. Its Levi
factor consists of the matrices
\[
 m(a,k)=\diag(a,k,\lambda(k)a^*),\qquad
 a\in\GL_2(E),\quad k\in K.
\]
The elements of $U$ are the matrices
\[
 u(X,Y)=\begin{pmatrix}I_2&X&Y\\0&I_2&X'\\0&0&I_2\end{pmatrix},
 \qquad
 \begin{aligned}
 &X'=-H^{-1}X^\dagger H,\\
 &HY+Y^\dagger H+(X')^\dagger HX'=0,
 \end{aligned}
\]
where $X,Y\in M_2(E)$.
Let $L_a=m(a,I_2)$, $D(k)=m(k,k)=\diag(k,k,k)$, and $S=D(K)U$.
Fix a nontrivial unitary additive character $\psi$ of $F$ and smooth
multiplicative characters $\chi_E$ of $E^\times$ and $\chi_F$ of
$F^\times$. Define
\begin{align*}
 \xi(u(X,Y))&=\psi(\Tr(\tr X)),\\
 \omega(k)&=\chi_E(\det k)\chi_F(\lambda(k)),\qquad
 \eta(D(k)u)=\omega(k)\xi(u).
\end{align*}
The trace is invariant under conjugation by $k$, so $\eta$ is a
character of $S$. In the real sections write $\psi(t)=e^{i\kappa t}$,
$\kappa\ne0$.

For the two choices of $H$, define the anti-involution
\begin{equation}\label{eq:tau-common}
 \tau(g)=
 \begin{cases}
 g^t,&H=J,\\[2pt]
 C^{-1}g^tC,&H=\diag(1,d),\quad C=\diag(H,H,H).
 \end{cases}
\end{equation}
On $2\times2$ blocks denote the corresponding operation by $\tau_2$.
These maps preserve $G$ and $K$, respectively. For the weighted
formula this follows from $W=(w_3\otimes I_2)C$; for ordinary
transpose use $W^2=I_6$. They preserve determinants and multipliers.
In both real cases $\tau(g)=g^t$.
Put
\[
 \HH=S\times S,\quad (s_1,s_2)g=s_1g\tau(s_2),\quad
 \chi=\eta\boxtimes\eta,
 \qquad\widetilde\HH=\HH\rtimes\langle\tau\rangle.
\]
The involution swaps the two factors. Extend the character by
$\widetilde\chi(\tau)=-1$. Equivariance is written
\begin{equation}\label{eq:equiv}
                 f(sg)=f(g\tau(s))=\eta(s)f(g).
\end{equation}

We call a $(\widetilde\HH,\widetilde\chi)$-equivariant distribution
\emph{odd}; equivalently it is $(\HH,\chi)$-equivariant and is
negated by $\tau$. Subtracting the $\tau$-transform of an
equivariant distribution produces an odd distribution. Thus
transpose invariance is equivalent to the vanishing of the odd
part of the distribution space under consideration. In the
non-Archimedean argument, this formulation allows
Lemma~\ref{lem:padic-orbits} to apply both to stabilizer elements
in $\HH$ and to elements involving $\tau$.

The stabilizer of $\xi$ in
$M$ is precisely $D(K)$: conjugation sends $X$ to $aXk^{-1}$,
and invariance of $\operatorname{Tr}_{E/F}\tr X$ for all $X$
forces $k^{-1}a=I_2$ by the nondegenerate trace pairing.

Write $\uu=\Lie(U)$. The block action of $m(a,k)$ gives
\begin{equation}\label{eq:unipotent-density}
 \left|\det_F\!\left(\Ad(m(a,k))|_{\uu}\right)\right|_F
      =|N(\det a)|_F^4|\lambda(k)|_F^{-8}.
\end{equation}
For $a=k$, this equals one because $N(\det k)=\lambda(k)^2$,
so $D(K)$ preserves Haar measure on $U$. Since $K$ is reductive
and $U$ is unipotent, both are unimodular; hence so are
$S=D(K)U$ and $\HH=S\times S$.

\subsection{Background over a non-Archimedean local field}
\label{subsec:padic-tools}

In this subsection $F$ is non-Archimedean. We recall Bernstein
localization, Frobenius descent, and restriction to a slice, and
state the Gelfand--Kazhdan criterion used in
Section~\ref{sec:padic}. All characters are smooth.

\subsubsection{Distributions and closed support.}
An $l$-space is a locally compact, totally disconnected Hausdorff
space. Write $\mathcal D(X)=C_c^\infty(X)$ for its locally
constant compactly supported functions and $\mathcal D'(X)$ for
their complex linear dual. For an $l$-group $L$ acting on $X$,
write
\[
 (\ell_*T)(\varphi)=T(\varphi\circ\ell),\qquad
 \mathcal D'_\theta(X)=
 \{T:\ell_*T=\theta(\ell)^{-1}T\text{ for all }\ell\in L\}.
\]
On a manifold with an invariant density this convention agrees
with $f(\ell x)=\theta(\ell)f(x)$ for generalized functions.
We also write $\mathcal D'(X)^{L,\theta}$ for this space.

For a closed subset $Z\subset X$ with complement $V$, restriction
of test functions and extension by zero give
\begin{equation}\label{eq:closed-support-tools}
\begin{gathered}
 0\longrightarrow\mathcal D(V)\longrightarrow\mathcal D(X)
     \longrightarrow\mathcal D(Z)\longrightarrow0,\\
 0\longrightarrow\mathcal D'(Z)\longrightarrow\mathcal D'(X)
     \longrightarrow\mathcal D'(V)\longrightarrow0.
\end{gathered}
\end{equation}
If the subsets are $L$-stable, taking equivariants is left exact.
Consequently vanishing on the successive differences of a finite
closed invariant filtration implies vanishing on $X$.
In particular, pushforward under the closed inclusion $Z\hookrightarrow X$
identifies distributions on $Z$ with distributions on $X$
supported on $Z$.

For later use, $\mathcal D(X\times Y)=\mathcal D(X)\otimes
\mathcal D(Y)$. If $L$ acts trivially on $Y$ and
$\mathcal D'_\theta(X)=0$, evaluating a distribution on products
of test functions gives $\mathcal D'_\theta(X\times Y)=0$.
See~\cite[\S1.22]{BZ} for this identity for test functions.

\subsubsection{Frobenius descent and character obstructions.}
Let an $l$-group $L$ act continuously on an $l$-space $X$, let
$L_0\subset L$ be a closed subgroup, and let $\theta$ be a smooth
character of $L$. Write $\Delta_L$ for the modular character, with
the convention
$\int_L\varphi(x\ell)\,dx=\Delta_L(\ell)^{-1}\int_L\varphi(x)\,dx$
for left Haar measure. Suppose $p:X\to L/L_0$ is continuous and
$L$-equivariant, and put $X_0=p^{-1}(L_0)$. Frobenius descent gives
\begin{equation}\label{eq:frobenius-tools}
 \mathcal D'_\theta(X)\simeq
 \mathcal D'_{\theta|_{L_0}\nu}(X_0),
 \qquad \nu=\Delta_L|_{L_0}\Delta_{L_0}^{-1};
\end{equation}
see~\cite[\S1.5]{Bernstein}. In particular an orbit through $x$
can carry a nonzero equivariant distribution only if
\begin{equation}\label{eq:orbit-modulus}
 \theta^{-1}|_{L_x}=\Delta_L|_{L_x}\Delta_{L_x}^{-1}.
\end{equation}
When this condition holds, the space of equivariant distributions
on the orbit is one-dimensional. Keeping this modulus is essential unless unimodularity has
been verified.

\begin{lemma}[Character obstruction]\label{lem:padic-orbits}
Let $\mathbf L$ be a linear algebraic $F$-group with an
$F$-rational action on an $F$-variety $\mathbf Y$.
Put $L=\mathbf L(F)$, let $X\subset\mathbf Y(F)$ be an
$L$-stable locally closed subset, and let
$\theta:L\to\C^\times$ be a smooth character.
For $x\in X$, write $L_x=\{\ell\in L:\ell x=x\}$.
If $L_x$ contains a unipotent element $u$ with $\theta(u)\ne1$,
or an element $a$ with $\theta(a)=-1$, then
$\mathcal D'_\theta(L\cdot x)=0$.
If one of these conditions holds for every $x\in X$, then
$\mathcal D'_\theta(X)=0$.
\end{lemma}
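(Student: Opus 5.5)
The plan is to reduce first to a single orbit via Frobenius descent, and then to handle a general $X$ by Bernstein--Zelevinsky stratification into finitely many locally closed orbit-unions.

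\textbf{Single orbit.} Take $X=L\cdot x$. By the orbit criterion \eqref{eq:orbit-modulus}, a nonzero element of $\mathcal D'_\theta(L\cdot x)$ exists only if $\theta^{-1}|_{L_x}=\Delta_L|_{L_x}\Delta_{L_x}^{-1}$ on $L_x$. Here Frobenius descent \eqref{eq:frobenius-tools} is applied with $L_0=L_x$ and the orbit map $L\cdot x\to L/L_x$. This map is a homeomorphism onto the orbit because $L$ is $\sigma$-compact and the orbit is locally closed, hence an $l$-space.

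So it suffices to show the modulus character is trivial on the offending element while $\theta$ is not.

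For a unipotent $u\in L_x$, every algebraic character of an $F$-group is trivial on unipotent elements. The modular characters $\Delta_L$ and $\Delta_{L_x}$ are absolute values of determinants of the adjoint action on Lie algebras. The stabilizer $L_x$ is the $F$-points of the algebraic stabilizer $\mathbf L_x$, up to finite index issues that do not affect $\Delta$ locally. Hence both modular characters are trivial on $u$, because $\Ad(u)$ is unipotent and has determinant one. Thus the criterion would force $\theta(u)=1$, a contradiction.

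For an element $a$ with $\theta(a)=-1$: the modulus character $\nu=\Delta_L\Delta_{L_x}^{-1}$ takes positive real values. Hence $\theta^{-1}(a)=-1$ cannot equal $\nu(a)>0$.

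In the intended application $L=\widetilde\HH$ is not connected, with $\tau$ an outer component. I would simply note that $\widetilde\HH$ is still the $F$-points of a linear algebraic group, namely a semidirect product with $\mathbb Z/2$, so the same argument applies.

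\textbf{General $X$.} By Bernstein--Zelevinsky (\cite[\S6.9, \S6.15]{BZ}), an algebraic action of $\mathbf L(F)$ on $\mathbf Y(F)$ has a finite filtration of $X$ by $L$-stable open subsets. Each successive difference is an $l$-space on which the orbits have a uniform structure. More precisely, there is a stratification by locally closed $L$-stable subsets $X_i$ admitting a geometric quotient, i.e.\ constructible orbit spaces.

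On each stratum one applies the single-orbit vanishing fibrewise, via the Bernstein localization principle. Concretely, one uses \cite[\S6.9]{BZ} (Bernstein's theorem on constructible actions): if $\mathcal D'_\theta(L\cdot y)=0$ for all $y$ in an $L$-stable locally closed subset, then the whole space of equivariant distributions vanishes. Gluing across the filtration is then done with the exact sequences \eqref{eq:closed-support-tools} and left exactness of invariants, giving $\mathcal D'_\theta(X)=0$.

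\textbf{Main obstacle.} The delicate point is precisely this passage from orbits to $X$. It relies on the constructibility (finiteness of the stratification) supplied by algebraicity of the action, which is why the hypothesis that $\mathbf L$ and $\mathbf Y$ are algebraic is imposed. I would cite \cite[Theorem~6.9]{BZ} directly rather than reprove it.
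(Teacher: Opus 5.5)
Your proposal is correct and follows essentially the same route as the paper: the orbit criterion from Frobenius descent, using that the modulus $\Delta_L|_{L_x}\Delta_{L_x}^{-1}$ is positive and trivial on unipotent elements, and then constructivity of algebraic actions (\cite[\S6.15, Theorem~A]{BZ}) together with \cite[Theorem~6.9]{BZ}, applied to the constant sheaf with equivariant structure twisted by $\theta^{-1}$, to pass from orbits to $X$. Your argument via $|\det\Ad(u)|=1$ spells out the triviality on unipotent elements that the paper states in one line.
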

\begin{proof}
By~\eqref{eq:orbit-modulus}, a nonzero equivariant distribution
on $L\cdot x$ would require
$\theta^{-1}|_{L_x}=\Delta_L|_{L_x}\Delta_{L_x}^{-1}$.
The character on the right takes positive real values and is
trivial on unipotent elements. Either stabilizer condition
therefore contradicts this equality.

By~\cite[\S6.15, Theorem~A]{BZ}, the action of $L$ on
$\mathbf Y(F)$ is constructive, and the same holds on the
invariant locally closed subset $X$.
The result in~\cite[Theorem~6.9]{BZ}, applied to the constant
rank-one sheaf with its equivariant structure twisted by
$\theta^{-1}$, now gives $\mathcal D'_\theta(X)=0$.

\end{proof}

In the boundary calculations, we use Lemma~\ref{lem:padic-orbits}
to deduce vanishing from elements that carry a point to its transpose.
Let $\sigma(s_1,s_2)=(s_2,s_1)$. If $x\in G$ and $t\in\HH$
satisfy
\begin{equation}\label{eq:padic-involutive-transporter}
 t\cdot x=\tau(x),\qquad\chi(t)=1,\qquad\sigma(t)=t^{-1},
\end{equation}
then $a_x=t^{-1}\tau$ fixes $x$, has order two, and satisfies
$\widetilde\chi(a_x)=-1$. Lemma~\ref{lem:padic-orbits}
therefore excludes nonzero odd equivariant distributions on
$\widetilde\HH\cdot x$. At a $\tau$-fixed point, we may take $t=1$.

It suffices to perform this calculation on chosen orbit
representatives. Indeed, if $y=h\cdot x$ with $h\in\HH$
and $a_y\in\widetilde\HH$ fixes $y$, then
$a_x=h^{-1}a_yh$ fixes $x$ and
$\widetilde\chi(a_x)=\widetilde\chi(a_y)$.
Thus the element $h$ used to change representatives need not
have character one.

\subsubsection{Bernstein localization.}
\begin{theorem}[Localization]\label{thm:localization}
Let $p:X\to Y$ be a continuous map of $l$-spaces, and suppose an
$l$-group $L$ acts on $X$ and preserves every fiber. If
$\mathcal D'_\theta(p^{-1}(y))=0$ for every $y\in Y$, then
$\mathcal D'_\theta(X)=0$.
\end{theorem}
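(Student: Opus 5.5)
The plan is to reduce to the classical Bernstein localization principle, the sheaf-theoretic statement of~\cite[\S1.4]{Bernstein}; see also~\cite[\S6.9]{BZ}. Regard $\mathcal D(X)$ as a module over the commutative algebra $\mathcal D(Y)$, acting by $(\phi\cdot\varphi)(x)=\phi(p(x))\varphi(x)$. Since $L$ preserves every fiber, we have $p(\ell x)=p(x)$, so this action commutes with the $L$-action: $(\phi\cdot\varphi)\circ\ell=\phi\cdot(\varphi\circ\ell)$. The first step is therefore to observe that $\mathcal D'_\theta(X)$ is the dual of the coinvariant space
\[
 Q=\mathcal D(X)\big/\operatorname{span}\{\varphi\circ\ell-\theta(\ell)^{-1}\varphi\},
\]
and that $Q$ is again a $\mathcal D(Y)$-module. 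Moreover, $Q$ is nondegenerate, meaning $\mathcal D(Y)Q=Q$, because every $\varphi$ equals $\phi\varphi$ for $\phi$ the indicator function of the compact open set $p(\supp\varphi)$ or a compact open neighbourhood of it.

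The second step is to identify the localization of $Q$ at a point $y$. Let $\mathfrak m_y$ be the ideal of functions vanishing at $y$. Using the exact sequence~\eqref{eq:closed-support-tools} applied to the closed fiber $Z=p^{-1}(y)$, one sees that $\mathcal D(X)/\mathfrak m_y\mathcal D(X)\simeq\mathcal D(p^{-1}(y))$. The key point is that $\mathfrak m_y\mathcal D(X)$ is exactly the space of test functions vanishing on the fiber. Indeed, if $\varphi$ vanishes on $p^{-1}(y)$, then, since $\varphi$ is locally constant with compact support, it vanishes on $p^{-1}(N)$ for some compact open neighbourhood $N$ of $y$, and then $\varphi=(1-1_N)\circ p\cdot\varphi$. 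Because taking coinvariants is right exact, we obtain $Q/\mathfrak m_yQ\simeq\mathcal D(p^{-1}(y))_{L,\theta}$. The dual of this space is $\mathcal D'_\theta(p^{-1}(y))$, which vanishes by hypothesis, and hence $Q=\mathfrak m_yQ$ for every $y$.

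The third step, which I expect to be the main obstacle, is to show that a nondegenerate $\mathcal D(Y)$-module $Q$ with $Q=\mathfrak m_yQ$ for all $y$ must be zero. Fix $q\in Q$ and write $q=\phi q$ with $\phi=1_C$ for some compact open $C\subset Y$. For each $y\in C$, the relation $q=\mathfrak m_yq'$ needs to be sharpened into the statement that $q=(1-1_{N_y})q$ for some compact open neighbourhood $N_y$ of $y$. I would first try to do this by working directly with test functions. The relation says that $\varphi$ representing $q$ equals $\sum_i\phi_i\varphi_i$ modulo coinvariant relations, with $\phi_i(y)=0$. Each $\phi_i$ is locally constant, so all of them vanish on a common compact open $N_y$, and multiplying by $1_{N_y}$ gives $1_{N_y}q=0$. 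Compactness then lets us cover $C$ by finitely many such $N_y$, which can be refined to be disjoint. Summing over them gives $q=1_Cq=\sum 1_{N_{y_j}}q=0$. So $Q=0$, and therefore $\mathcal D'_\theta(X)=Q^*=0$.

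In writing this up I would cite~\cite[\S1.4]{Bernstein} for the general principle and include only the identification of the fiberwise quotient above, since that is where the fiber-preservation hypothesis enters.
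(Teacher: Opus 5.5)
Your argument is correct and is essentially the standard proof of the Bernstein localization principle, which the paper does not prove but simply cites from \cite[\S1.4]{Bernstein}: you view $\mathcal D(X)$ as a nondegenerate $\mathcal D(Y)$-module, identify $Q/\mathfrak m_yQ$ with $\mathcal D(p^{-1}(y))_{L,\theta}$, and show that a nondegenerate module all of whose fibers vanish is zero. One small repair is needed in your second step: $1-1_N$ does not lie in $\mathcal D(Y)$ when $Y$ is noncompact, so write $\varphi=\bigl(1_{C\setminus N}\circ p\bigr)\varphi$ instead, where $C$ is a compact open set containing $N\cup p(\supp\varphi)$.
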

This is the localization principle of~\cite[\S1.4]{Bernstein}.
It applies to odd distributions by including the involution in
$L$, provided that the involution preserves $p$.
For example, the determinant on $M_2(E)$ is invariant under
conjugation and transpose. On $F^2$, the product map
$(x,y)\mapsto xy$ is invariant under the action
$t\cdot(x,y)=(tx,t^{-1}y)$ of $F^\times$ and the interchange
$(x,y)\mapsto(y,x)$.
Both maps will occur below. We use localization on fibers, rather
than assume that a distribution on an arbitrary invariant open
set extends equivariantly to its ambient space.

\subsubsection{Restriction to a slice.}
Let $L$ act analytically on a manifold $X$. A submanifold $M_0$
is an $L$-slice if
\[
 \rho:L\times M_0\longrightarrow X,\qquad(\ell,m)\longmapsto\ell m
\]
is a surjective submersion. For generalized functions, which act
on locally constant test densities, pullback by $\rho$ is defined
and injective. Equivariance factors the pullback into a character
on $L$ and a generalized function on $M_0$. A subgroup preserving
$M_0$ imposes the corresponding restricted equivariance. This is
the slice restriction of~\cite[Lemma~2.13]{JLX}.

To express it for distributions on test functions, one must choose
densities and account for their transformation characters. For the
open cell, Proposition~\ref{prop:open} uses bi-invariant Haar
densities on $G$ and $\GL_2(E)$, which are also preserved by
$\tau$ and $\tau_2$, respectively. On the plane
family in Section~\ref{subsec:plane-padic}, we give the full
fiber action and check its density explicitly, including at the
origin. These checks show that no additional character occurs
when we restrict to these slices.
If a submanifold of the source of a surjective submersion is a
union of fibers, its image is a submanifold; if it is closed, its
image is closed. This follows in local product coordinates and
yields the statement for orbit saturations in~\cite[Lemma~2.12]{JLX},
which we use in Section~\ref{sec:orbits}.

\subsubsection{The Gelfand--Kazhdan criterion.}
Let $G_0$ be a unimodular $l$-group, let $S_1,S_2$ be closed
subgroups, and let $\eta_i:S_i\to\C^\times$ be smooth characters.
Fix a continuous anti-involution $\sigma$ of $G_0$. Suppose every
generalized function on $G_0$ satisfying
\[
 f(s_1g)=\eta_1(s_1)f(g),\qquad
 f(gs_2)=\eta_2(s_2)^{-1}f(g)
\]
is $\sigma$-invariant. Then the Gelfand--Kazhdan criterion gives,
for every irreducible admissible smooth complex representation
$\pi$ of $G_0$,
\begin{equation}\label{eq:GK-product-general}
 \dim\Hom_{S_1}(\pi,\eta_1)\,
 \dim\Hom_{S_2}(\pi^\vee,\eta_2)\leq1.
\end{equation}
By~\cite[Theorem 2.21]{JLX}, it is enough to prove the required
symmetry for distributions on which the Bernstein center acts
through a character. Our hypothesis gives this symmetry for every
distribution with the stated equivariance, so that theorem applies.
In our application
\begin{equation}\label{eq:GK-characters}
 S_1=S,\quad S_2=\tau(S),\quad\eta_1=\eta,\quad
 \eta_2(b)=\eta(\tau(b)^{-1}).
\end{equation}
Thus the two equations for generalized functions are exactly
\eqref{eq:equiv}. The contragredient identification in
Lemma~\ref{lem:dual} below identifies the two factors in
\eqref{eq:GK-product-general}.

\subsection{Background over an Archimedean local field}
\label{subsec:real-tools}

Here $E/F=\C/\R$ and $H=J$ or $I_2$. We begin with the
Gelfand--Kazhdan criterion, which reduces the multiplicity problem
to symmetry of tempered generalized functions. We then introduce
the function spaces and the restriction and localization results
used on the open cell in Section~\ref{sec:orbits}, followed by
the differential operators and support lemmas needed for the
boundary argument in Section~\ref{sec:real}. We conclude by
identifying the contragredient representations needed to deduce
multiplicity one from the criterion.

\subsubsection{The Archimedean Gelfand--Kazhdan criterion.}
\label{subsubsec:real-GK}
For a real reductive group $G_0$, put
$\mathfrak g_{0,\C}=\Lie(G_0)\otimes_\R\C$ and
\[
 \mathcal Z_{G_0}=U(\mathfrak g_{0,\C})^{G_0},
\]
where $U(\mathfrak g_{0,\C})$ is the universal enveloping algebra.
We identify $\mathcal Z_{G_0}$ with the algebra of bi-invariant
differential operators on $G_0$. Write $\ctemp(G_0)$ for the
space of tempered generalized functions, namely continuous
linear functionals on Schwartz densities; the precise definitions
are given in Section~\ref{subsubsec:nash-functions}.

\begin{proposition}[Gelfand--Kazhdan criterion]\label{prop:real-GK}
Let $S_1,S_2$ be closed subgroups of a real reductive group $G_0$,
let $\eta_i:S_i\to\C^\times$ be continuous characters, and let
$\sigma$ be a continuous anti-automorphism of $G_0$. Suppose
every $f\in\ctemp(G_0)$ that is a joint eigenvector for
$\mathcal Z_{G_0}$ and satisfies
\[
 f(s_1gs_2)=\eta_1(s_1)\eta_2(s_2)^{-1}f(g),
 \qquad s_i\in S_i,\quad g\in G_0,
\]
also satisfies $f\circ\sigma=f$. Then every irreducible
Casselman--Wallach representation $\pi$ of $G_0$ satisfies
\[
 \dim\Hom_{S_1}(\pi,\eta_1)\,
 \dim\Hom_{S_2}(\pi^\vee,\eta_2)\leq1.
\]
Here $\pi^\vee$ is the smooth contragredient, the intertwining
maps are continuous, and the identities for $f$ are interpreted
in the sense of generalized functions.
\end{proposition}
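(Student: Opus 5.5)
This is the classical Gelfand--Kazhdan argument in the Archimedean setting, and I would follow the route of \cite[Proposition~10.1]{JSZ}. Suppose both Hom spaces are nonzero. Fix a nonzero $\ell_1\in\Hom_{S_1}(\pi,\eta_1)$ and a nonzero $\ell_2\in\Hom_{S_2}(\pi^\vee,\eta_2)$. It then suffices to show that $\ell_1$ is unique up to scalar. The symmetric argument, with $\sigma$ and the roles of $\pi,\pi^\vee$ exchanged, gives the same for $\ell_2$; the product bound follows.

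\textbf{Step 1: the generalized matrix coefficient.} For a Schwartz density $\phi$ on $G_0$, the operator $\pi^\vee(\phi)$ maps the continuous dual of $\pi^\vee$ into $\pi^\vee$. This uses the Casselman--Wallach property: $\pi$ is the smooth dual of the smooth dual, up to moderate growth. So the pairing $f_{\ell_1,\ell_2}(\phi)=\ell_2(\pi^\vee(\phi)\ell_1)$ makes sense. Moderate growth of Casselman--Wallach representations, together with continuity of $\ell_i$, shows that $f_{\ell_1,\ell_2}$ is continuous on Schwartz densities, hence lies in $\ctemp(G_0)$. The equivariance of $\ell_1,\ell_2$ gives the bi-equivariance $f(s_1gs_2)=\eta_1(s_1)\eta_2(s_2)^{-1}f(g)$ formally. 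Since $\mathcal Z_{G_0}$ acts on $\pi$ by the infinitesimal character (Schur's lemma for irreducible Casselman--Wallach representations), $f$ is a joint eigenvector.

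\textbf{Step 2: injectivity.} The map $\ell_1\mapsto f_{\ell_1,\ell_2}$ is injective. If $f_{\ell_1,\ell_2}=0$, then $\ell_2$ annihilates the span of $\pi^\vee(\phi)\ell_1$. That span is a nonzero $G_0$-invariant subspace, and its closure is all of the irreducible $\pi^\vee$. This forces $\ell_2=0$, a contradiction.

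\textbf{Step 3: using $\sigma$.} By hypothesis, $f\circ\sigma=f$ for every such $f$. Let $\tilde\pi=\pi^\vee\circ\sigma$, which is irreducible Casselman--Wallach with $\tilde\pi^\vee\cong\pi\circ\sigma$. Rewriting the identity $f(\phi)=f(\phi\circ\sigma)$ shows that $\phi\mapsto\ell_2(\pi^\vee(\phi)\ell_1)$ equals a matrix coefficient in which the roles are swapped. This yields a well-defined kernel argument: if $\ell_1,\ell_1'$ both lie in $\Hom_{S_1}(\pi,\eta_1)$, choose a combination $\ell=a\ell_1+b\ell_1'$ and construct $\phi$-families separating them. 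Concretely, I would follow Gelfand--Kazhdan by showing that the left kernels $\{\phi:\ \ell(\pi(\phi)\cdot)=0\}$, computed via $f$, coincide for all $\ell$. Then the functionals $\ell_1,\ell_1'$ factor through the same irreducible quotient and are proportional.

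\textbf{Main obstacle.} The main difficulty is the analytic part of Step 1: proving that $f_{\ell_1,\ell_2}$ is genuinely tempered, and that the manipulations with $\sigma$ are legitimate on Schwartz densities rather than only compactly supported ones. For this I would invoke the Casselman--Wallach globalization (uniqueness of the smooth moderate-growth globalization) and the automatic continuity results used in \cite{JSZ}, rather than reprove them.
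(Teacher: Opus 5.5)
Your proposal follows the same route as the paper, which gives no argument of its own and simply cites \cite[Proposition~10.1]{JSZ}; your outline is the standard Gelfand--Kazhdan argument behind that result, and like the paper you take the analytic input (temperedness of generalized matrix coefficients, Schur's lemma for Casselman--Wallach representations) from the literature. A few slips should be fixed: $\pi^\vee(\phi)$ sends the continuous dual of $\pi$ (not of $\pi^\vee$) into $\pi^\vee$; as defined, $f_{\ell_1,\ell_2}$ is left $(S_2,\eta_2)$- and right $(S_1,\eta_1^{-1})$-equivariant, so the hypothesis should be applied to $g\mapsto f_{\ell_1,\ell_2}(g^{-1})$, which is harmless because $\sigma(g^{-1})=\sigma(g)^{-1}$; $\pi^\vee\circ\sigma$ is only an anti-representation, so use $g\mapsto\pi^\vee(\sigma(g)^{-1})$; and the reason the annihilators in Step~3 coincide is that the annihilator of $\ell_1$ in the Schwartz algebra is one radical of the form $(\phi_1,\phi_2)\mapsto f(\phi_1*\phi_2)$, the other radical depends only on $\ell_2$, and $f\circ\sigma=f$ together with $\sigma_*(\phi_1*\phi_2)=\sigma_*\phi_2*\sigma_*\phi_1$ carries one radical onto the other, after which Schur's lemma on the Harish-Chandra module gives proportionality.
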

This is~\cite[Proposition~10.1]{JSZ}.

For our model, take $G_0=G$, $\sigma=\tau$, and the subgroups
and characters in~\eqref{eq:GK-characters}. The equivariance
condition is then exactly~\eqref{eq:equiv}. The Casimir operator
$\Omega$, defined in Section~\ref{subsubsec:real-lie}, belongs
to $\mathcal Z_G$, so every joint eigenvector in the criterion
satisfies $\Omega f=cf$ for some $c\in\C$.
Theorem~\ref{thm:distribution} therefore supplies the required
symmetry. Lemma~\ref{lem:dual} identifies the two Hom spaces
in the resulting product bound and yields Theorem~\ref{thm:main}.

\subsubsection{Nash manifolds and generalized functions.}\label{subsubsec:nash-functions}
We now describe the generalized functions appearing in the
criterion. Their definition uses Nash manifolds and Schwartz
sections, which also allow the restriction to slices and
localization used in the real open-cell argument of
Proposition~\ref{prop:open}.

A subset of $\R^n$ is \emph{semialgebraic} if it is a finite
Boolean combination of sets defined by polynomial equalities and
inequalities. A \emph{Nash manifold} is a smooth manifold with a
finite atlas whose coordinate images are open semialgebraic sets
and whose transition maps have semialgebraic graphs. A \emph{Nash
map} is a smooth map whose graph is semialgebraic in these charts;
a real-valued Nash map is a \emph{Nash function}. A \emph{Nash open
subset} is an open subset that is semialgebraic in the charts.
The real matrix groups and smooth semialgebraic matrix strata used
here have their natural Nash structures. Their algebraic actions
and the map $\det_\C:M_2(\C)\to\C$, viewed over $\R$, are Nash.

For an open semialgebraic set $V\subset\R^n$, the Schwartz space
$\mathcal S(V)$ consists of the smooth complex-valued functions
$\varphi$ satisfying
\[
 p_{a,\alpha}(\varphi):=
 \sup_{x\in V}|a(x)\partial^\alpha\varphi(x)|<\infty
\]
for every Nash function $a$ on $V$ and every multi-index $\alpha$,
with the topology defined by these seminorms. For $V=\R^n$ this
recovers the usual Schwartz space. In general, extension by zero
identifies $\mathcal S(V)$ with the Schwartz functions on $\R^n$
whose derivatives of every order vanish on $\R^n\setminus V$
\cite[Definition~4.1.1 and Theorem~5.4.1]{AGSchwartz}.

To define densities intrinsically, let $\operatorname{Dens}_X\to X$
be the complex line bundle of densities on an $n$-dimensional
smooth manifold $X$. Its local frames $|dx_1\cdots dx_n|$ satisfy
\[
 |dx_1\cdots dx_n|
 =\left|\det\left(\frac{\partial x}{\partial y}\right)\right|
   |dy_1\cdots dy_n|
\]
under a change of coordinates. Smooth densities are smooth
sections of this line bundle, and
$C_c^\infty(X,\operatorname{Dens}_X)$ denotes its compactly supported
smooth sections; see~\cite[Appendix~A.1.1]{AGSchwartz}.

If $X$ is Nash, choose a finite Nash atlas $x_i:U_i\to V_i\subset\R^n$.
We define the space of \emph{Schwartz sections}
\[
 \mathcal S(X,\operatorname{Dens}_X)
 \subset C^\infty(X,\operatorname{Dens}_X)
\]
to consist of sums of sections obtained by extending
$(\varphi_i\circ x_i)|dx_{i,1}\cdots dx_{i,n}|$ by zero from $U_i$
to $X$, where $\varphi_i\in\mathcal S(V_i)$. These sections are
also called \emph{Schwartz densities}. We give this space the
quotient topology induced by the resulting surjection
$\bigoplus_i\mathcal S(V_i)\to\mathcal S(X,\operatorname{Dens}_X)$.
Both the space and its topology are independent of the atlas
\cite[Definition~5.1.2 and Proposition~5.1.2]{AGSchwartz}.

We define the spaces of generalized functions by
\[
 \begin{aligned}
 \cgen(X)&=C_c^\infty(X,\operatorname{Dens}_X)',\\
 \ctemp(X)&=\mathcal S(X,\operatorname{Dens}_X)'
             \subset\cgen(X),
 \end{aligned}
\]
where a prime denotes the continuous dual with its strong topology.
The first space is defined on every smooth manifold; the second
is the space of tempered generalized functions on a Nash manifold.
The superscript $-\xi$ in $\ctemp$ follows the convention of \cite[\S2.3]{JSZ}
and is unrelated to the character $\xi$ of $U$.

If a Lie group $L$ acts smoothly on $X$ and $\theta:L\to\C^\times$
is a continuous character, write $\cgen_\theta(X)$ for the
subspace satisfying $f(\ell x)=\theta(\ell)f(x)$ in the sense
of generalized functions. When $X$ and the action are Nash,
put $\ctemp_\theta(X)=\ctemp(X)\cap\cgen_\theta(X)$.

\subsubsection{Slices and tempered localization.}\label{subsubsec:real-localization}
With these function spaces fixed, we recall the restriction and
localization results used in Proposition~\ref{prop:open}. Let
$L$ act smoothly on $X$, let $\theta$ be a continuous character
of $L$, and let $M_0\subset X$ be a smooth submanifold.
If the action map $L\times M_0\to X$ is a surjective submersion,
pullback and removal of the character factor give an injective
restriction map for equivariant generalized functions. When the
action and the slice are Nash, this restriction preserves
temperedness. These are
\cite[Lemma 2.7 and the paragraph following Lemma 2.8]{JSZ}.
This restriction is defined by the submersion; it is not an
arbitrary pullback to an embedded submanifold.

We also use the following particular localization statement.
If $p:X\to Y$ is an $L$-invariant Nash map and
$\ctemp_\theta(X)=0$, then
\begin{equation}\label{eq:real-localization}
 \ctemp_\theta(p^{-1}(Y_0))=0
 \quad\text{for every Nash open }Y_0\subset Y.
\end{equation}
This is~\cite[Lemma 2.8]{JSZ}. It will be applied to
$p=\det_E:M_2(E)\to E$ and $Y_0=E^\times$.

In the hyperbolic case, the resulting matrix problem also uses
the following result about the orthogonal group:
for a nondegenerate real quadratic space $V_0$ and $r<\dim V_0$,
an $\SO(V_0)$-invariant tempered generalized function on $V_0^r$
is $\OO(V_0)$-invariant; see~\cite[Proposition 1.5]{JSZ}.
Our application has $V_0=\mathfrak{sl}_2(\R)$,
$q(A)=\tr(A^2)$, and $r=2$. Scalar matrix coordinates are
fixed and can be included as extra parameters.

\subsubsection{Lie algebras and the Casimir operator.}
\label{subsubsec:real-lie}
To pass from the open cell to the boundary, we need to take
account of the difference between the two local-field settings.
Over a non-Archimedean
field, \eqref{eq:closed-support-tools} identifies distributions
supported on a closed subset with distributions on that subset;
but over $\R$, a distribution supported on a smooth boundary piece
may also involve normal derivatives, so orbitwise vanishing alone
is insufficient. The generalized matrix coefficients in the
Gelfand--Kazhdan criterion satisfy the Casimir equation
$\Omega f=cf$. To use this equation, the Lie algebra calculation
in Section~\ref{subsec:casimir} differentiates equivariance and
yields a transverse derivative term which cannot be cancelled
by the tangential terms (Lemma~\ref{lem:transverse}). For the
remaining quasi-split stratum $G_{3221}$, we also need the support
results for unipotent stabilizers and metrically proper
submanifolds recalled below from~\cite[\S\S2--3]{JSZ}.

All Lie algebras in this paragraph are real. The Lie algebra of $G$ is
\begin{equation}\label{eq:lie-algebra}
 \g=\{Z\in M_6(\C):Z^\dagger W+WZ=\ell(Z)W,
                    \ \ell(Z)\in\R\}.
\end{equation}
Write $\kk=\Lie(K)$, $\mm=\Lie(M)$ and $\uu=\Lie(U)$. Put $\s=\Lie(S)$.
For $Z\in\kk$, the scalar $\ell(Z)\in\R$ is defined by
$Z^\dagger H+HZ=\ell(Z)H$.
The differential of the diagonal embedding $D$ is $D_*(Z)=\diag(Z,Z,Z)$, so
$\s=D_*(\kk)\ltimes\uu$.
In particular,
\[
 \mm=\left\{\diag(A,Z,\ell(Z)I_2-H^{-1}A^\dagger H):
              A\in M_2(\C),\ Z\in\kk\right\},
\]
and
\[
 \uu=\left\{N(A,C_0)=
 \begin{pmatrix}0&A&C_0\\0&0&-H^{-1}A^\dagger H\\0&0&0\end{pmatrix}:
             HC_0+C_0^\dagger H=0\right\}.
\]
The notation $\mm_\C=\mm\otimes_\R\C$ means complexification;
$U(\mm_\C)$ denotes its universal enveloping algebra. We use the same convention
for $\g_\C$ and $U(\g_\C)$.

For $Z\in\g$, the left-invariant differential operator is
\[
 Z^Lf(g)=\left.\frac{d}{dt}\right|_{t=0}f(g\exp(tZ)).
\]
Thus its tangent vector at $g$ is $gZ$. Extend this action
complex-linearly and multiplicatively to $U(\g_\C)$, and by
duality to generalized functions. This convention determines
the signs in the Casimir calculation.

Use the real bilinear form and the associated metric
\begin{equation}\label{eq:metric}
 B(Z,Z')=\Ree\tr(ZZ'),\qquad
 \langle V,V'\rangle_g=B(g^{-1}V,g^{-1}V').
\end{equation}
It is nondegenerate: $\g=\mathfrak u(W)\oplus\R I_6$ is an
orthogonal decomposition, the second summand has norm $6$, and
the trace form on the first is nondegenerate by complexification.
It is $\Ad(G)$-invariant, so the metric is bi-invariant.

Choose a real basis $(Z_j)$ of $\g$, and let $(Z^j)$ be its
$B$-dual basis, characterized by $B(Z_i,Z^j)=\delta_{ij}$.
The Casimir element associated with $B$ is
\[
 \Omega=\sum_j Z_jZ^j\in U(\g_\C),
\]
where the products are taken in the universal enveloping algebra.
This element is independent of the chosen basis, and the
$\Ad(G)$-invariance of $B$ implies that it is central.
Under the differential action defined above, the same symbol
denotes the second-order operator
\[
 \Omega f=\sum_j Z_j^L\bigl((Z^j)^L f\bigr),
\]
extended by duality to generalized functions. This operator is
bi-invariant, has the inverse metric as its principal symbol,
and commutes with $\tau$. The same construction with the
nondegenerate restriction $B|_{\mm}$ defines $\Omega_M$.
The decomposition
\[
 \g=\uu\oplus\mm\oplus\uu^t
\]
has isotropic summands $\uu$ and $\uu^t$, which are paired
nondegenerately by $B$. The summand $\mm$ is orthogonal to both. Section~\ref{subsec:casimir} uses this
decomposition and right $U^t$-equivariance to express $\Omega$
as a transverse vector field plus an operator from $U(\mm_\C)$.

\subsubsection{Generalized functions supported on submanifolds.}
\label{subsubsec:real-support}
After vanishing on the open cell has been proved, the remaining
task is to exclude generalized functions supported on the boundary.
The lemmas below allow us to use the Casimir equation and stabilizer
actions for this purpose in Section~\ref{sec:real}, while accounting
for normal derivatives of every order. These local statements
apply to all generalized functions on smooth manifolds. For a
closed submanifold $Z\subset X$, write $\cgen(X;Z)$ for the
generalized functions on $X$ supported on $Z$.

\begin{lemma}[Transverse differential operator]\label{lem:transverse}
Let $Z$ be a smooth closed submanifold of $X$. Suppose $V$ is
a smooth vector field with $V_z\notin T_zZ$ for every $z\in Z$,
and $D$ is tangential to $Z$. Then
\[
 f\in\cgen(X;Z),\qquad(V+D)f=0\quad\Longrightarrow\quad f=0.
\]
Here tangential means locally a sum of products of vector fields
tangent to $Z$, with smooth coefficients; $D$ may have any finite
order.
\end{lemma}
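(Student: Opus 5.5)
The statement is local, so I would first reduce to a coordinate neighbourhood of a point $z_0\in Z$. Choose local coordinates $(x,y)\in\R^k\times\R^m$ with $Z=\{y=0\}$. Since $V_{z_0}\notin T_{z_0}Z$, the flow-box theorem lets me straighten $V$. Concretely, I take a hypersurface through $Z$ transverse to $V$ and flow along $V$. After shrinking the neighbourhood and reordering the normal coordinates, this gives $V=\partial_{y_1}$, with $Z$ still contained in $\{y_1=0\}$ and equal to $\{y=0\}$ in suitable coordinates. Tangential vector fields remain tangential under any change of coordinates preserving $Z$. Therefore $D$ is still a finite sum of products of fields of the form $a\,\partial_{x_i}$ and $b\,y_j\partial_{y_l}$, with smooth coefficients. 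Vanishing of $f$ is a local property, so it suffices to prove $f=0$ on each such chart.

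Next I would use the structure theorem for generalized functions supported on a submanifold. Locally, on a relatively compact chart, $f$ has finite order, so
\[
 f=\sum_{|\alpha|\le N}\partial_y^\alpha\delta_Z\otimes g_\alpha(x),
\]
with $g_\alpha$ generalized functions on $Z$ and this representation unique. Suppose $f\ne0$. Let $N$ be the largest value of $\alpha_1$ among the nonzero terms; among the terms with $\alpha_1=N$, take one of maximal total order $|\alpha|$.

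The key step is a filtration argument on the order of normal differentiation. Consider the filtration of $\cgen(X;Z)$ by transverse order, together with a refinement that weights $\alpha_1$ first. The operator $\partial_{y_1}$ sends $\partial_y^\alpha\delta\otimes g$ to $\partial_y^{\alpha+e_1}\delta\otimes g$, which strictly raises the order. I would then check that each tangential generator does not raise it:
\begin{itemize}
\item $a(x,y)\partial_{x_i}$ acts on the $g_\alpha$ and, through the Taylor expansion of $a$ in $y$, produces only terms with fewer $y$-derivatives;
\item $y_j\partial_{y_l}$ acting on $\partial_y^\alpha\delta$ gives $-\partial_{y_l}\bigl(\alpha_j\partial_y^{\alpha-e_j}\delta\bigr)$ up to lower terms, so the total order is preserved.
\end{itemize}
The second item is where the delicacy lies. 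The term $y_1\partial_{y_l}$ preserves total order but can change $\alpha_1$, and products of such terms can reshuffle indices. So I would instead use total normal order $|\alpha|$ as the filtration and compare top-order symbols. In the top total order $M$, the operator $D$ acts on the leading coefficient vector $(g_\alpha)_{|\alpha|=M}$ by an operator $D_0$. This $D_0$ is built from linear vector fields in $y$ (an action of $\mathfrak{gl}_m$ on homogeneous polynomials of degree $M$ in $\partial_y$) together with tangential differential operators in $x$. The contribution of $V$ lands in order $M+1$.

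The equation $(V+D)f=0$ in order $M+1$ then reads $\partial_{y_1}\cdot(\text{top part of }f)=0$. This uses that $D$ contributes nothing in order $M+1$. Multiplication by $\partial_{y_1}$ on the polynomial ring $\cgen(Z)[\partial_{y_1},\dots,\partial_{y_m}]$ is injective. Hence the top part vanishes, contradicting the choice of $M$, and therefore $f=0$.

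I expect the main obstacle to be justifying that $D$, of arbitrary finite order, never raises total normal order. This needs the careful computation of $y_j\partial_{y_l}$ on $\partial^\alpha\delta$. It also needs control of the smooth coefficients depending on $y$: multiplication by $y_j$ lowers order, and the Taylor remainder terms only lower it further. I would verify this once for a single tangential vector field and then induct on the number of factors in each product.
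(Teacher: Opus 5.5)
Your argument is correct and is essentially the one the paper relies on: it cites the first-order case of \cite[Lemma~2.2]{JSZ} and sketches the same mechanism, namely that in adapted coordinates with $V=\partial_{y_1}$ a tangential operator preserves the filtration by total normal order $|\alpha|$, while $V$ raises it by exactly one with injective leading term, which forces the top coefficients $g_\alpha$ to vanish. (A minor slip: $y_j\partial_{y_l}\partial_y^\alpha\delta=-\alpha_j\partial_y^{\alpha-e_j+e_l}\delta$ plus an extra $-\partial_y^\alpha\delta$ when $j=l$, so the correction is of the same order rather than lower; since you only need that the order does not increase, this does not affect the proof.)
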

This is the first-order case of~\cite[Lemma 2.2]{JSZ}. One may
see the mechanism in normal coordinates: $V$ raises the largest
normal order with nonzero leading coefficient, whereas a
tangential operator does not.

The Casimir equation also gives a vanishing criterion through the
geometry of the trace metric. Metrical properness, as defined below, ensures that the
principal symbol of $\Omega-c$ has a nonzero component in the normal
directions, so Lemma~\ref{lem:metric-support} excludes nonzero
solutions supported on the submanifold. We use this criterion on
$C\setminus C_0$ and $Z$ in the quasi-split case
(Lemmas~\ref{lem:Cmetric} and~\ref{lem:Zmetric}).

\begin{definition}\label{def:metric-proper}
A submanifold $Z$ of a manifold $X$ with a nondegenerate metric is
\emph{metrically proper} if, at each $z$, $T_zZ$ is contained in
a proper nondegenerate subspace of $T_zX$.
\end{definition}
It is enough to exhibit $v\perp T_zZ$ with $\langle v,v\rangle\ne0$:
then $v^\perp$ is such a subspace.

A second-order differential operator $D$ on $X$ is \emph{of
Laplacian type} with respect to this metric if its principal
symbol is the inverse metric; see~\cite[Definition~3.1(b)]{JSZ}.
Equivalently, in local coordinates $(x_1,\ldots,x_n)$,
\[
 D=\sum_{i,j=1}^n b^{ij}(x)
       \frac{\partial^2}{\partial x_i\,\partial x_j}+D_1,
 \qquad (b^{ij})=(b_{ij})^{-1},
\]
where $b_{ij}=\langle\partial_{x_i},\partial_{x_j}\rangle$
and $D_1$ has order at most one. Its principal symbol has a
nonzero normal component along every metrically proper
submanifold, giving the following vanishing criterion.

\begin{lemma}[Metrically proper support]\label{lem:metric-support}
Let $Z$ be metrically proper and closed in an open set $V\subset X$.
If $D$ is of Laplacian type, then
\[
 f\in\cgen(V;Z),\quad Df=0\quad\Longrightarrow\quad f=0.
\]
In particular this applies to $D=\Omega-c$ on $G$.
\end{lemma}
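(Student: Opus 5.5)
The argument is local and reduces to a statement about distributions on a vector space with a nondegenerate quadratic form. Fix $z\in Z$ and suppose $f\neq 0$ near $z$. By the structure theorem for distributions supported on a closed submanifold (Schwartz), we may choose local coordinates $(y,x)=(y_1,\dots,y_k,x_1,\dots,x_m)$ near $z$ with $Z=\{x=0\}$. In these coordinates $f$ has the form
\[
 f=\sum_{|\alpha|\le N} a_\alpha(y)\,\partial_x^\alpha\delta(x),
\]
where the $a_\alpha$ are generalized functions on $Z$, at least one $a_\alpha$ with $|\alpha|=N$ is nonzero, and $N$ is finite on a neighbourhood of $z$. The plan is to show that $Df=0$ forces the top-order part to vanish. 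This contradicts the choice of $N$.

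Write $D=\sum b^{ij}\partial_i\partial_j+D_1$. When $D$ is applied to $f$, the pieces that can raise the normal order to $N+2$ are exactly the terms $b^{ij}\partial_{x_i}\partial_{x_j}$ in the pure normal directions. After expanding the smooth coefficients at $x=0$ (a coefficient that vanishes on $Z$ lowers the normal order), the order-$(N+2)$ component of $Df$ is
\[
 \sum_{|\alpha|=N}\ \sum_{i,j\le m} b^{x_ix_j}(y,0)\,a_\alpha(y)\,\partial_x^{\alpha+e_i+e_j}\delta(x).
\]
The other terms have normal order at most $N+1$. The tangential second derivatives, the mixed derivatives and $D_1$ raise it by at most one. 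So $Df=0$ forces this order-$(N+2)$ component to vanish. Equivalently, for the polynomial
\[
 P_y(\xi)=\sum_{|\alpha|=N}a_\alpha(y)\xi^\alpha,
\]
the product $q_y(\xi)P_y(\xi)$ is zero. Here $q_y(\xi)=\sum_{i,j\le m} b^{x_ix_j}(y,0)\xi_i\xi_j$ is the restriction of the inverse metric to the conormal directions. Both factors are polynomials in $\xi$ with coefficients generalized functions in $y$, and the identity holds coefficientwise.

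Metrical properness is what guarantees that $q_y$ is not identically zero. The cotangent covectors annihilating $T_zZ$ correspond under the metric to the vectors orthogonal to $T_zZ$. By assumption there is such a vector $v$ with $\langle v,v\rangle\neq0$, so $q_z$ takes a nonzero value at the covector $\langle v,\cdot\rangle$. By continuity $q_y\neq0$ for $y$ near $z$. The main obstacle is to deduce $P_y=0$ from $q_yP_y=0$ when the $a_\alpha$ are only generalized functions, since we cannot simply divide pointwise. To handle this, after shrinking the neighbourhood and making a smooth linear change of normal coordinates depending on $y$, arrange that the coefficient of $\xi_1^2$ in $q_y$ is a nowhere-vanishing smooth function $c(y)$. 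This is possible because some conormal vector has nonzero length near $z$. Now order the monomials of $P$ lexicographically with $\xi_1$ first. The coefficient of $\xi_1^{2}\xi^{\beta}$, where $\xi^\beta$ is the leading monomial of $P$, equals $c(y)a_\beta(y)$. Since $c$ is a nowhere-vanishing smooth function, $c\,a_\beta=0$ gives $a_\beta=0$, and inducting on the monomial order kills every top-order coefficient. This contradicts the choice of $N$, so $f=0$ near every point of $Z$, and hence $f=0$.

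For $D=\Omega-c$ on $G$, the earlier discussion gives that $\Omega$ is bi-invariant with principal symbol the inverse of the bi-invariant metric $\langle\cdot,\cdot\rangle_g$ from \eqref{eq:metric}. So $\Omega-c$ is of Laplacian type, and the lemma applies to it.
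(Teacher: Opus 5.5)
Your proof is correct and follows the mechanism the paper relies on when it cites \cite[Lemma~3.2]{JSZ}: metrical properness makes the principal symbol (the dual metric) not identically zero on the conormal space. Hence the top normal-order part of $Df$ is $q_yP_y$, and your leading-monomial argument correctly shows that $q_y$ is a non-zero-divisor, even though for an indefinite metric $q_y$ may vanish on some conormal directions. The one step you call ``by assumption'' needs a line: if $T_zZ\subset W$ with $W$ proper and nondegenerate, then $W^\perp$ is a nonzero nondegenerate subspace of $(T_zZ)^\perp$ and so contains a non-isotropic vector.
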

This is~\cite[Lemma 3.2]{JSZ}. For the trace metric~\eqref{eq:metric},
$\Omega-c$ is of Laplacian type because $\Omega$ has the inverse
metric as its principal symbol, and subtracting $c$ does not
change that symbol.

\begin{definition}\label{def:unipotent-incompatible}
Let a Lie group $L$ act smoothly on $X$, and let $\theta$ be a
character. An invariant submanifold $Z$ is \emph{unipotently
$\theta$-incompatible} if, near every point, there is a local
$L$-slice $Z_0$ in $Z$ and a smooth map $a:Z_0\to L$ such that
$a(z)z=z$, $\theta(a(z))\ne1$, and $a(z)$ acts unipotently
on $T_zX/T_zZ$.
\end{definition}
For an algebraic action, a smooth family of unipotent stabilizing
elements with nontrivial character has these properties. This is
the form used in our matrix calculations.

\begin{lemma}[Unipotent character obstruction]\label{lem:real-unipotent}
If $Z$ is closed in an invariant open set $V$ and is unipotently
$\theta$-incompatible in $X$, then
$\cgen_\theta(V;Z)=0$.
\end{lemma}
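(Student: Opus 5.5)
The statement is local, so I would work near a fixed point $z_0\in Z$ and reduce to a computation in a model neighbourhood. Take a local $L$-slice $Z_0\subset Z$ through $z_0$ and the smooth map $a:Z_0\to L$ given by Definition~\ref{def:unipotent-incompatible}. Choose a tubular neighbourhood of $Z$ in $V$. A generalized function $f\in\cgen_\theta(V;Z)$ then has, near $z_0$, a finite-order expansion
\[
 f=\sum_{|\alpha|\le N} f_\alpha\otimes\partial^\alpha\delta_Z .
\]
Here the $f_\alpha$ are generalized functions on $Z$, and $\partial^\alpha\delta_Z$ are normal derivatives of the delta function along $Z$. The order $N$ is bounded on relatively compact sets. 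A more intrinsic way to say this is through the filtration of $\cgen(V;Z)$ by transversal order. Its graded pieces are
\[
 \cgen\bigl(Z,\operatorname{Sym}^k(N_Z)\otimes\operatorname{Dens}\bigr),
\]
where $N_Z=TX|_Z/TZ$ is the normal bundle. Since $Z$ is $L$-invariant, this filtration is $L$-stable. It therefore suffices to show that each graded piece has no nonzero $(L,\theta)$-equivariant element. One then argues by downward induction on the top order: the leading coefficient of an equivariant $f$ is itself equivariant in the graded piece, so it vanishes, and we pass to the next order.

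For the graded piece, I would restrict to the slice. The action map $L\times Z_0\to Z$ is a submersion near $z_0$, so slice restriction (as in \S\ref{subsubsec:real-localization}) turns an $(L,\theta)$-equivariant section $s$ of $\operatorname{Sym}^k(N_Z)\otimes\operatorname{Dens}$ into a generalized section on $Z_0$. This restriction is injective, and $s$ is determined by it. Equivariance under the stabilizing element $a(z)$ gives, pointwise on $Z_0$ in the sense of generalized sections,
\[
 \rho_k(a(z))\,s(z)=\theta(a(z))\,\nu(a(z))\,s(z),
\]
where $\rho_k$ is the induced action on $\operatorname{Sym}^k(N_z)$. The factor $\nu$ collects the action on the density line and any modulus character. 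Because $a(z)$ acts unipotently on $N_z$, it acts unipotently on $\operatorname{Sym}^k(N_z)$, so $\rho_k(a(z))$ has the single eigenvalue $1$. The density and modulus factor is a positive real character evaluated at a unipotent element (or at an element acting unipotently on the relevant tangent spaces), so it equals $1$. We therefore get
\[
 (\rho_k(a(z))-\theta(a(z)))\,s=0 .
\]
Since $\theta(a(z))\ne1$, the endomorphism $\rho_k(a(z))-\theta(a(z))$ is invertible. It depends smoothly on $z$, so its inverse is a smooth endomorphism-valued function on $Z_0$ and can be applied to $s$. Hence $s=0$ near $z_0$. Covering $Z$ by such neighbourhoods and using that generalized functions form a sheaf gives $f=0$.

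The main obstacle is making the equivariance identity for $a(z)$ rigorous. The element depends on $z$, so we are not applying a single group element to $f$. To handle this, I would define the map
\[
 \Phi:Z_0\times U\to U,\qquad \Phi(z,x)=a(z)x,
\]
on a tubular neighbourhood $U$. I would then compare $f$ with its pullback by $\Phi$, using the fibration $U\to Z_0$ induced by the slice. Equivalently, one pulls back along $L\times(\text{normal slice})\to V$ and checks that the fibrewise action of $a(z)$ is well defined on the slice restriction. A second, smaller point is justifying that the density or modulus factor is trivial on $a(z)$. Here I would use that $a(z)$ acts unipotently on $T_zX/T_zZ$, and that in our algebraic applications it is a unipotent group element, so every algebraic character with positive values is trivial on it. This is the argument of \cite[\S2--3]{JSZ}, which I would follow.
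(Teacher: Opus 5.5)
Your proposal is correct and follows the same route as the paper. The paper cites \cite[Lemma~3.4]{JSZ} and sketches exactly this argument: filter by transversal order, use that $a(z)$ acts unipotently on each $\operatorname{Sym}^k(N_z)$ so that $\rho_k(a(z))-\theta(a(z))$ is invertible, and justify the pointwise use of the $z$-dependent stabilizer by local slice restriction; two details need tightening. For the step you flag, $(\ell,z)\mapsto(\ell a(z),z)$ is a diffeomorphism of $L\times Z_0$ commuting with the action map to $Z$, so pulling the graded piece back along it gives $\rho_k(a(z))s_0=\theta(a(z))s_0$ directly, with only $\bigl|\det(a(z)|_{N_z})\bigr|=1$ entering and no modulus of $L$; and because the transversal order need not be bounded on all of $V$, your downward induction ``on the top order'' should be run on the $L$-invariant open sets where the local order is at most $k$.
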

This is~\cite[Lemma 3.4]{JSZ}. On the terms of a fixed normal
order, the normal action and its symmetric powers are unipotent.
Multiplication by the nontrivial scalar $\theta(a(z))$ makes
the equivariance equation invertible. Local slice restriction
justifies this reasoning for generalized sections.

\begin{lemma}[Finite filtration]\label{lem:real-filtration}
Let $Z$ be a closed invariant subset of a manifold with metric.
Suppose $Z$ has a finite closed invariant filtration whose
successive differences are smooth and are either metrically
proper or unipotently $\theta$-incompatible in the ambient
manifold. If $D$ is of Laplacian type, then
$\cgen_\theta(X;Z)\cap\ker D=0$.
\end{lemma}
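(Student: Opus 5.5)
The plan is to induct on the length of the filtration, stripping off its open top stratum at each step and using the two support lemmas on that stratum. Write the filtration as
\[
 Z=Z_0\supset Z_1\supset\cdots\supset Z_m=\varnothing,
\]
with each $Z_i$ closed and invariant and each difference $Z_i\setminus Z_{i+1}$ a smooth submanifold. By hypothesis each difference is either metrically proper or unipotently $\theta$-incompatible in $X$. Let $f\in\cgen_\theta(X;Z)\cap\ker D$. The aim is to show by descending induction on $i$ that $f$ vanishes on the invariant open set $V_i=X\setminus Z_i$. The case $i=0$ is immediate, since $f$ is supported on $Z=Z_0$. The case $i=m$ gives $V_m=X$ and hence $f=0$.

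For the inductive step, assume $f|_{V_i}=0$. The set $V_{i+1}=X\setminus Z_{i+1}$ is open and invariant, and $Z_i\cap V_{i+1}=Z_i\setminus Z_{i+1}$ is closed in $V_{i+1}$. Since restriction to an open subset commutes with the action and with differential operators, the restriction $g=f|_{V_{i+1}}$ has three properties:
\begin{itemize}
 \item $g$ is $\theta$-equivariant;
 \item $g$ satisfies $Dg=0$;
 \item $g$ vanishes on $V_{i+1}\cap V_i=V_{i+1}\setminus Z_i$, so it is supported on the smooth closed submanifold $Z_i\setminus Z_{i+1}$ of $V_{i+1}$.
\end{itemize}
Thus $g\in\cgen_\theta(V_{i+1};Z_i\setminus Z_{i+1})\cap\ker D$.

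If the difference is metrically proper, I apply Lemma~\ref{lem:metric-support} on the open set $V_{i+1}$. The metric and the Laplacian-type condition are local, so they restrict to $V_{i+1}$, and the lemma gives $g=0$. If the difference is unipotently $\theta$-incompatible in $X$, I apply Lemma~\ref{lem:real-unipotent} with $V=V_{i+1}$. That lemma is stated precisely for a submanifold closed in an invariant open set, so it gives $g=0$ without using $D$ at all. In either case $f|_{V_{i+1}}=0$, which completes the induction.

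The only delicate point is bookkeeping rather than analysis. One must make sure each lemma is applied on the open set $V_{i+1}$ in which the stratum is closed, not on all of $X$, and that the hypotheses are inherited by the restriction. Metric properness and unipotent incompatibility are both pointwise or local conditions along the stratum, so they are unaffected by shrinking the ambient manifold to $V_{i+1}$. The sheaf property of generalized functions ensures that vanishing on $V_i$ together with vanishing on $V_{i+1}$ is consistent, since $V_i\subset V_{i+1}$ and the sets increase. No gluing beyond this nested restriction is needed.
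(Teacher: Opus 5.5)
Your proof is correct and follows the paper's route: the paper cites Lemma~3.7 of JSZ and says only that induction removes the successive open differences using the two preceding lemmas, which is what you do. At each step you restrict to the invariant open set $X\setminus Z_{i+1}$, where the stratum $Z_i\setminus Z_{i+1}$ is closed, and apply Lemma~\ref{lem:metric-support} or Lemma~\ref{lem:real-unipotent}; the one small slip is that the induction runs upward in $i$, not downward.
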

This is~\cite[Lemma 3.7]{JSZ}; that paper calls the hypothesis
the $U_\theta M$ property. We will use the stated filtration
directly. Induction removes its successive open differences by
the two preceding lemmas.

\subsubsection{Contragredients in both settings.}
The Gelfand--Kazhdan criterion gives a product of two
multiplicities. The following description of the contragredient
identifies the two factors in our application. We record both
local-field cases for use in Sections~\ref{sec:padic}
and~\ref{sec:real}.

\begin{lemma}\label{lem:dual}
Put $\alpha(g)=\tau(g)^{-1}$. For every irreducible representation
in the category of Theorem~\ref{thm:main},
$\pi^\vee\simeq\pi\circ\alpha$.
\end{lemma}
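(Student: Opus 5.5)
The plan is to reduce the lemma to the known MVW-type contragredient theorems, cited in the outline: \cite[Theorem~B]{RV} for $p$-adic fields and \cite[Theorem~1.2 and Corollary~1.4]{Adams} for $\R$. These describe $\pi^\vee$ for unitary similitude groups as the twist of $\pi$ by a suitable automorphism. The work is to match their automorphism with our $\alpha(g)=\tau(g)^{-1}$, up to inner automorphisms of $G$. Inner twists do not change the isomorphism class, so this matching suffices.

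\textbf{Step 1.} First we identify $\alpha$ concretely. In the quasi-split case $\alpha(g)=(g^t)^{-1}$. From $g^\dagger Wg=\lambda(g)W$ we get
\[
 (g^t)^{-1}=\lambda(g)^{-1}\,W\bar gW^{-1}.
\]
Hence $\alpha(g)=\lambda(g)^{-1}\Ad(W)(\bar g)$. In the anisotropic case, $\tau(g)=C^{-1}g^tC$ and $W=(w_3\otimes I_2)C$ give similarly
\[
 \alpha(g)=\lambda(g)^{-1}\Ad(w)(\bar g)
\]
for an explicit matrix $w$ conjugating the relevant forms. In both cases, then, $\alpha$ equals complex conjugation $g\mapsto\bar g$ (which preserves $G$, since $W$ has entries in $F$), followed by an inner automorphism. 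It is then multiplied by the central character $g\mapsto\lambda(g)^{-1}$. One must check that the inner automorphism is by an element of $G$ and not only of $\GL_6(E)$. This holds because $W\in G$: $W^\dagger WW=W^3=W$ after the normalization $W^2=I_6$. In the anisotropic case one verifies directly that the conjugating matrix lies in $\GU(W)(F)$, possibly with a nontrivial similitude factor.

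\textbf{Step 2.} Next we apply the contragredient theorems. Over non-Archimedean $F$, \cite[Theorem~B]{RV} (the similitude version of Mœglin--Vignéras--Waldspurger) states that for $\pi$ irreducible smooth,
\[
 \pi^\vee\simeq\pi^{\delta}\otimes(\omega_\pi^{-1}\circ\lambda)
\]
up to the appropriate normalization. Here $\delta$ is conjugation by an element of $\GL_6(E)$ of similitude $-1$ composed with Galois conjugation, in the form $g\mapsto\lambda(g)^{-1}\delta\bar g\delta^{-1}$. We will rewrite this exactly as $\pi\circ\alpha$ using Step 1. The central twist is absorbed because $\alpha$ restricted to the center $E^\times$ is $z\mapsto\bar z/N(z)=z^{-1}$, which is consistent with $\omega_{\pi^\vee}=\omega_\pi^{-1}$. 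Over $\R$ we use \cite[Theorem~1.2, Corollary~1.4]{Adams}. There the contragredient of an irreducible Casselman--Wallach representation is $\pi\circ\theta$ for a Chevalley-type involution $\theta$. For unitary groups this involution is inner-conjugate to $g\mapsto\bar g$, extended to similitudes by the multiplier twist. We then identify it with $\alpha$ up to $\Ad(G)$. For $\GU(3,3)$ the group has two components (the multiplier sign), and we must ensure that the conjugating element lies in the full group.

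\textbf{Main obstacle.} The delicate point is matching the cited normalizations precisely, in particular the inner-class ambiguity. The cited results give an automorphism only up to conjugation by some element. In the similitude setting that element may have to be taken in $\GU(W)$ rather than $\mathrm U(W)$, or even outside $G$ (an element of similitude $-1$ when $-1\notin\lambda(G)$). My approach is to compute the outer class of $\alpha$ via its action on the center and on the similitude character. We then check that it agrees with the class in \cite{RV} and \cite{Adams}; the remaining discrepancy is inner by an element of $G$, and is therefore harmless. For the anisotropic real form $\GU(4,2)$, only positive multipliers occur, so the corresponding discrepancy must be inner there; we verify this directly.
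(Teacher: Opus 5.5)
Your non-Archimedean argument is the paper's. Step~1 already gives $\alpha(g)=\lambda(g)^{-1}hgh^{-1}$ with $h(v)=W\bar v$ (resp.\ $h(v)=(w_3\otimes I_2)\bar v$). This $h$ is an antilinear involution satisfying $\langle hv,hw\rangle=\overline{\langle v,w\rangle}$, which is exactly the shape of the dualizing involution in \cite[Theorem~B]{RV}. The paper simply applies that theorem, so no normalization has to be matched. Your ``main obstacle'' therefore does not arise there, and no element of similitude $-1$ enters.

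Do not add a twist on top of this. Since $\lambda(g)^{-1}$ is central, $\pi\circ\alpha$ already equals $(\pi\circ\beta)\otimes(\omega_\pi^{-1}\circ\lambda)$ with $\beta(g)=W\bar gW$. Your Step~2 puts $\lambda(g)^{-1}$ inside $\delta$ \emph{and} twists by $\omega_\pi^{-1}\circ\lambda$, which counts it twice. The resulting central character would be $z\mapsto\omega_\pi(z)^{-1}\omega_\pi(N(z))^{-1}$, not $\omega_\pi^{-1}$.

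Over $\R$ your argument is incomplete as it stands. You assert two things without proof: that Adams' involution is conjugate to $g\mapsto\lambda(g)^{-1}\bar g$, and that the remaining discrepancy with $\alpha$ is inner by an element of $G$. An outer-class comparison only produces an inner automorphism defined over $\R$, that is, an element of $G_{\mathrm{ad}}(\R)$. You would then need two further facts:
\begin{itemize}
\item[(i)] this element lifts to $G(\R)$. It does for $\GU$, because the center $\mathrm{Res}_{\C/\R}\mathbb{G}_m$ has trivial $H^1$, but this has to be said.
\item[(ii)] Adams' theorem covers the disconnected group $\GU(3,3)(\R)$. It does, since his connectedness hypothesis is on the algebraic group, not on its real points.
\end{itemize}

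The paper avoids all of this by verifying the hypothesis of \cite[Corollary~1.4]{Adams} for $\alpha$ itself. Choose $R_0$ real orthogonal with $R_0^tWR_0=\diag(I_p,-I_q)$. Then $\{R_0\diag(z_1,\ldots,z_6)R_0^t:|z_1|=\cdots=|z_6|\}$ is a Cartan subgroup of $G$, compact modulo the split center and hence fundamental. It consists of symmetric matrices, so $\alpha(g)=g^{-t}$ acts on it by inversion. That one check is what your real case is missing. With it, no matching up to $\Ad(G)$ is needed, and your concern about the multipliers of $\GU(4,2)$ disappears.
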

\begin{proof}
Over $\R$, $H=J$ or $I_2$, and $\alpha(g)=g^{-t}$.
Choose real orthogonal $R_0$ with
$R_0^tWR_0=\diag(I_p,-I_q)$, where $(p,q)=(3,3)$ or $(4,2)$.
The Cartan subgroup with real points
\[
 \{R_0\diag(z_1,\ldots,z_6)R_0^t:
     z_j\in\C^\times,\ |z_1|=\cdots=|z_6|\}
\]
is compact modulo the one-dimensional split center, and hence
fundamental. Its matrices are symmetric, so $\alpha$ restricts
to inversion. The real Chevalley involution theorem
\cite[Theorem 1.2 and Corollary 1.4]{Adams} gives the asserted
contragredient identification. Its connectedness hypothesis is on
the reductive algebraic group, not on the group of real points;
it therefore applies to the full $\GU(3,3)(\R)$ as well.

Over a non-Archimedean field define an antiunitary involution on
$E^6$ by $h(v)=W\bar v$ in the quasi-split case and
$h(v)=(w_3\otimes I_2)\bar v$ in the anisotropic coordinates.
The defining similitude equation gives, respectively,
\[
 g^{-t}=\lambda(g)^{-1}W\bar gW,
 \qquad
 C^{-1}g^{-t}C=\lambda(g)^{-1}(w_3\otimes I_2)\bar g(w_3\otimes I_2).
\]
Thus $\alpha(g)=\lambda(g)^{-1}hgh^{-1}$.
The dualizing theorem of~\cite[Theorem B]{RV} applies
and yields $\pi^\vee\simeq\pi\circ\alpha$.
This theorem also includes residue characteristic two.
\end{proof}

\section{Orbit decomposition}\label{sec:orbits}

This section establishes the geometric decomposition and the vanishing of an equivariant distribution on the open cell that is needed in both local-field cases of
Theorem~\ref{thm:distribution}. We first classify the smooth
$P$--$\tau(P)$ double cosets and arrange them in a filtration
along which we will prove vanishing. Proposition~\ref{prop:open}
then shows that every odd equivariant distribution on the open
cell vanishes; over $\R$, the assertion concerns tempered
generalized functions. It therefore remains to exclude support
on the boundary in Sections~\ref{sec:padic} and~\ref{sec:real} for $F$ being both non-archimedean and archimedean respectively.

To prepare those arguments, we study the quasi-split boundary
stratum $G_{3221}$ in more detail. A parabolic double coset may
contain many $\HH$-orbits, so we compute their stabilizers and
the restrictions of $\chi$, and construct a closed filtration
with explicit normal forms. We will need these results for proving the vanishing statements for the boundary orbits in Section \ref{sec:padic} and \ref{sec:real} as well.

\subsection{The parabolic double coset decomposition}
Recall from Section~\ref{sec:model} that
$A_0=\langle e_1,e_2\rangle_E$ and
$B_0=\langle e_5,e_6\rangle_E$ are isotropic subspaces of
dimension two, with stabilizers $P$ and $\tau(P)$, respectively.
We classify the $P-\tau(P)$ double cosets in $G$ by the relative position of $A_0$
and $gB_0$. In matrix coordinates, the required invariants are
the ranks
\[
 R(g)=\begin{pmatrix}
 \rank g_{\{3,4,5,6\},\{3,4,5,6\}}&
 \rank g_{\{3,4,5,6\},\{5,6\}}\\
 \rank g_{\{5,6\},\{3,4,5,6\}}&
 \rank g_{\{5,6\},\{5,6\}}
 \end{pmatrix}=\begin{pmatrix}r_{11}&r_{12}\\r_{21}&r_{22}\end{pmatrix}.
\]
Here $g_{I,J}$ denotes the submatrix with row indices $I$ and
column indices $J$. We abbreviate the four entries by $4222$,
$3221$, and so forth, and write $G_{r_{11}r_{12}r_{21}r_{22}}$
for the corresponding orbit inside $G$.
\begin{lemma}\label{lem:rank-common}
The $P$--$\tau(P)$ double cosets are classified by $R(g)$.
For hyperbolic $H$ their ranks are $4222,3221,3111,2110,2000$;
for anisotropic $H$ they are $4222,3111,2000$. Every stratum is
smooth and stable under $\widetilde\HH$. The open stratum is
$O=G_{4222}=UM\tau(U)$.
\end{lemma}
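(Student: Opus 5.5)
The plan is to translate double cosets into relative positions of isotropic planes and then read the invariants off the rank matrix $R(g)$. Since $P=\mathrm{Stab}(A_0)$ and $\tau(P)=\mathrm{Stab}(B_0)$ (transpose sends the stabilizer of $\langle e_1,e_2\rangle$ to the stabilizer of $\langle e_5,e_6\rangle$ in the quasi-split case; the weighted transpose does the same because $C$ is block diagonal), the map $g\mapsto gB_0$ identifies $G/\tau(P)$ with the variety of totally isotropic $E$-planes. The double cosets $P\backslash G/\tau(P)$ are therefore the $P$-orbits on isotropic planes $B=gB_0$. To a pair $(A_0,B)$ attach two invariants: $i=\dim(A_0\cap B)$, and the rank $j$ of the Hermitian pairing $A_0\times B\to E$, equivalently $2-\dim(B\cap A_0^\perp)$. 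Here $A_0^\perp=\langle e_1,\dots,e_4\rangle$.

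First I will express these invariants through $R(g)$. The columns $5,6$ of $g$ span $B$. The rows $5,6$ of $g$ compute the coordinates of $B$ modulo $A_0^\perp$, so $r_{22}=\rank g_{\{5,6\},\{5,6\}}=2-\dim(B\cap A_0^\perp)=j$. Similarly rows $3,\dots,6$ compute $B$ modulo $A_0$, giving $r_{12}=2-i$. The entries $r_{11}$ and $r_{21}$ are the analogous invariants of $g^{-1}$, that is, of $\tau$-dual data; using $g^{-1}=\lambda^{-1}W^{-1}g^\dagger W$, they are determined by $(i,j)$. This explains why only the listed quadruples appear.

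Next I will determine the admissible pairs $(i,j)$. The constraints are $j\le 2-i$, and $B\cap A_0^\perp/(A_0\cap B)$ is an isotropic subspace of the two-dimensional Hermitian space $A_0^\perp/A_0\simeq(E^2,H)$. When $H$ is anisotropic this forces $B\cap A_0^\perp=A_0\cap B$, so $i=2-j$, leaving $(0,2),(1,1),(2,0)$. When $H$ is hyperbolic the defect $\dim(B\cap A_0^\perp)-i$ can be $0$ or $1$, which gives the five pairs $(0,2),(0,1),(1,1),(1,0),(2,0)$. These match $4222,3221,3111,2110,2000$. I will exhibit a permutation-type representative for each pair. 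Transitivity of $P$ on each pair follows from Witt's extension theorem, applied after fixing $A_0$: choose adapted bases of $A_0$, $B$ and the intersections, and extend the isometry. One also has to arrange the similitude; here the scalar $\diag(\lambda a^*,\dots)$ freedom in $M$ suffices, since $P$ contains the full Levi $\GL_2(E)\times K$.

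For the remaining claims: smoothness holds because each stratum is a single $P\times\tau(P)$-orbit of an algebraic action, hence a smooth locally closed submanifold. $\HH$-stability follows from $S\subset P$ and $\tau(S)\subset\tau(P)$. For $\tau$-stability, I will check that $\tau$ maps $PgP^\tau$ to $P\tau(g)\tau(P)$ and that $R(\tau(g))$ is the transpose of the matrix $R(g)$; in every listed case $r_{12}=r_{21}$, so the coset is preserved. Finally, $UM\tau(U)$ is open and dense by the big-cell Bruhat decomposition and has $r_{22}=2$. Conversely, $r_{22}=2$ allows Gauss elimination of the lower-right block, which lands $g$ in $UM\tau(U)$.

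The main obstacle is the transitivity step over $F$ in the hyperbolic defect-one cases $3221$ and $2110$. There one must verify that the Witt extension can be chosen inside $P$, with the similitude factor handled correctly. I would first try reducing to the unitary group by scaling with $K$.
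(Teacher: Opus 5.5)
Your proposal is correct and follows essentially the paper's route. Your invariants $i=\dim(A_0\cap gB_0)$ and $j=2-\dim(gB_0\cap A_0^\perp)$ are the paper's $t$ and $s$, and give $R(g)=\left(\begin{smallmatrix}2+j&2-i\\2-i&j\end{smallmatrix}\right)$. Your bound on $(i,j)$, via the isotropic image in $A_0^\perp/A_0\simeq(E^2,H)$, is equivalent to the paper's Witt-index count $\dim\bigl(A_0+(gB_0\cap A_0^\perp)\bigr)=4-i-j$. Transitivity, smoothness, $\tau$-stability via $R(\tau g)=R(g)^t$, and the open cell are handled the same way.

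The step you flag as the main obstacle is not one. The Witt extension of the matching isometry $A_0+B_1\to A_0+B_2$ is an isometry of $W$, so it lies in $\mathrm{U}(W)(F)\subset G$ with multiplier one. Since it preserves $A_0$, it lies in $P$. No similitude adjustment or scaling by $K$ is needed, in the cases $3221$ and $2110$ or any other.
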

\begin{proof}
Put $V=gB_0\in G/\tau(P)$. The ranks in $R(g)$ can be expressed in terms of
the two invariants
\[
 t=\dim_E(A_0\cap V),\qquad
 s=2-\dim_E(V\cap A_0^\perp),
\]
where orthogonality is taken with respect to
$\langle x,y\rangle=x^\dagger Wy$.

The coordinate projection $q:E^6\to E^2$,
$q(x_1,\ldots,x_6)=(x_5,x_6)$, has kernel $A_0^\perp$.
In the basis $(ge_5,ge_6)$ of $V$ and the standard basis of
$E^2$, its restriction $q|_V$ has matrix
$g_{\{5,6\},\{5,6\}}$, so $r_{22}=s$.
The restriction of the Hermitian form to $A_0\times V$ has
matrix $Hg_{\{5,6\},\{5,6\}}$, of rank $s$. Hermitian symmetry
therefore gives $\dim_E(A_0\cap V^\perp)=2-s$.

To compute the remaining ranks, let $p:E^6\to E^6/A_0$ be
the quotient map. Since
$g\langle e_3,e_4,e_5,e_6\rangle_E=V^\perp$, the submatrices
of $g$ defining $r_{12}$, $r_{21}$, and $r_{11}$ represent,
respectively, $p|_V$, $q|_{V^\perp}$, and $p|_{V^\perp}$.
Their kernels are $A_0\cap V$,
$V^\perp\cap A_0^\perp=(A_0+V)^\perp$, and
$A_0\cap V^\perp$, of dimensions $t$, $2+t$, and $2-s$.
Rank--nullity therefore gives
\[
 R(g)=\begin{pmatrix}2+s&2-t\\2-t&s\end{pmatrix}.
\]

The Witt index determines the possible values of $(s,t)$. The isotropic subspace $A_0+(V\cap A_0^\perp)$ has
dimension $4-s-t$, while
$A_0\cap V\subset V\cap A_0^\perp$ gives $s+t\leq2$.
If $H$ is hyperbolic, the ambient Witt index is three, hence
$1\leq s+t\leq2$. The possible pairs are
$(2,0),(1,0),(1,1),(0,1),(0,2)$, giving the five displayed
ranks. If $H$ is anisotropic, the Witt index is two, forcing
$s+t=2$ and leaving the three pairs $(2,0),(1,1),(0,2)$.
The representatives below realize every listed case.

These invariants also determine the double coset.
Since $\tau(P)$ stabilizes $B_0$, it suffices to show that two
subspaces $V_1,V_2$ with the same $(s,t)$ are conjugate under
$P$. For each pair $(A_0,V_i)$, choose a basis of $A_0\cap V_i$,
extend it to bases of $A_0\cap V_i^\perp$ and
$V_i\cap A_0^\perp$, and complete these to bases of $A_0$
and $V_i$ by choosing the remaining vectors to be dual under
the Hermitian form. Since both subspaces are isotropic, the
resulting bases of $A_0+V_1$ and $A_0+V_2$ have identical
Hermitian Gram matrices. Matching them gives an isometry carrying
$A_0$ to itself and $V_1$ to $V_2$. Witt extension gives an
isometry of $E^6$ with the same properties, hence an element of
$P$. This is an isometry of the Hermitian spaces over $E/F$,
so the argument classifies double cosets in $G=\GU(W)(F)$.

Each stratum is a homogeneous orbit of $P\times\tau(P)$,
hence smooth, and is $\HH$-stable because $S\subset P$.
Both transpose and weighted transpose send $R(g)$ to $R(g)^t$;
the formula above has equal off-diagonal entries. Thus every
stratum is also $\tau$-stable, and hence $\widetilde\HH$-stable.

Finally, the condition $r_{22}=2$ is open and, by the rank formula,
equivalent to $R(g)=4222$. Since $R(I_6)=4222$, the double
coset classification gives $G_{4222}=P\tau(P)$. 

\end{proof}

Representatives for the quasi-split double cosets are the
following permutation matrices, listed in the order of
Lemma~\ref{lem:rank-common}:
\[
 I_6,\quad(1\ 4)(3\ 6),\quad(2\ 5),\quad
 (1\ 3)(2\ 5)(4\ 6),\quad j=(1\ 5)(2\ 6).
\]
In the anisotropic case they are $I_6,(2\ 6),j$. Each preserves
$W$, and its ranks are obtained directly from the indicated
permutation.

For later calculations, each double coset admits a parametrization
by two $\GL_2(E)$ factors and the $\HH$-action. For every
representative $w$ above, the map
\begin{equation}\label{eq:rank-cover}
 \HH\times\GL_2(E)^2\longrightarrow G_{R(w)},\qquad
 (h,A,B)\longmapsto h\cdot L_AwL_B
\end{equation}
is a surjective submersion. Indeed, the decompositions
$m(a,k)=D(k)L_{k^{-1}a}=L_{ak^{-1}}D(k)$ absorb the two
diagonal $K$ factors and the unipotent factors into $\HH$.
This smooth change of parameters identifies the
$P\times\tau(P)$ orbit map with~\eqref{eq:rank-cover}, so the
submersion assertion follows from that for the orbit map.

The rank conditions also give the filtration used in the
boundary arguments of Sections~\ref{sec:padic} and~\ref{sec:real}.
When $H$ is hyperbolic, the open chain is
\begin{equation}\label{eq:open-chain}
 O\subset G'=\{r_{12}=2\}\subset G_1=\{r_{22}\ge1\}
       \subset G_2=\{r_{12}\ge1\}\subset G.
\end{equation}
The rank conditions define open subsets, and the list in
Lemma~\ref{lem:rank-common} shows that the successive differences
are $G_{3221},G_{3111}, G_{2110},G_{2000}$, each closed in the new open ambient
set. In the anisotropic case the chain is
$O\subset\{r_{22}\ge1\}\subset G$, with successive differences being $G_{3111},G_{2000}$.

\subsection{Distributions on the open cell}\label{sec:open}
The main result of this subsection equates checking $\tau$-invariance of $(
\tilde{\HH},\tilde{\chi})$-equivariant distributions on the open cell $O$ with checking transpose invariance of $K$-equivariant distributions on $\mathrm{GL}_2(E)$, with distributions replaced by tempered generalized functions when $F=\mathbb{R}$.
The next two lemmas establish
this invariance for the two Hermitian planes;
Proposition~\ref{prop:open} then gives the reduction from $O$.

\begin{lemma}\label{lem:linear}
Suppose $H=J$. In the real case, let $T$ be a tempered generalized
function on $\GL_2(\C)$. In the non-Archimedean case, let $T$ be
a distribution on $\GL_2(E)$. If $T$ is invariant under conjugation
by $K$, then it is invariant under transpose.
\end{lemma}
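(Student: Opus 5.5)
We follow the strategy outlined in Step~3 of the introduction. First we make the conjugation action explicit. For $H=J$ and $k\in\GL_2(E)$ one has $k^\dagger Jk=\lambda J$ exactly when $\bar k^t=\lambda Jk^{-1}J$. Hilbert~90 shows that, after a fixed conjugation, $K=E^\times\GL_2(F)$. In the basis where this holds we choose $\delta\in E^\times$ with $\bar\delta=-\delta$ and write $z=A+\delta B$ with $A,B\in M_2(F)$. Conjugation by $E^\times$ is trivial. Conjugation by $g\in\GL_2(F)$ acts simultaneously on $(A,B)$, and transpose acts as $(A,B)\mapsto(A^t,B^t)$. We must check that the change of basis intertwines transpose on $\GL_2(E)$ with the transpose in the new coordinates, up to an element of $K$. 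If it does not, we replace transpose by transpose composed with a fixed element of $K$, such as $J$; this is harmless because $T$ is $K$-invariant and $JzJ$ is conjugation by $J\in K$. The Haar density on $M_2(E)$ corresponds to Lebesgue measure on $X=M_2(F)^2$, and both this identification and the involution preserve it. The statement therefore becomes: every $\GL_2(F)$-invariant distribution (respectively tempered generalized function) on the open set $\{\det(A+\delta B)\ne0\}\subset X$ is invariant under $(A,B)\mapsto(A^t,B^t)$.

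In the non-Archimedean case we apply Theorem~\ref{thm:localization} to the odd part, with $L=\GL_2(F)\rtimes\langle t\rangle$ and $p=\det_E$. The map $p$ is invariant under conjugation and transpose, and its fibers are closed in $X$. By localization it suffices to show that every odd distribution on $X$ vanishes, since a distribution on a closed fiber pushes forward to $X$. We then decompose $X$. Write $A=a I+A_0$ and $B=bI+B_0$ with $A_0,B_0\in\mathfrak{sl}_2(F)$. The scalar parts are fixed by both actions, and translation by scalars commutes with both. The sets $(\lambda I,\mu I)+Y$, with $Y=\GL_2(F)^2$, form an invariant open cover of $X$. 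Using a partition of unity by invariant characteristic functions, or by localizing over the scalar parameters, we reduce to $Y$. There Prasad's result~\cite[Proposition~4.5]{Prasad} gives invariance under simultaneous transpose.

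In the real case we argue as follows. Conjugation by $\GL_2(\R)$ on $V=\mathfrak{sl}_2(\R)$ with $q(A)=\tr A^2$ has image $\SO(V)\cong \mathrm{PGL}_2(\R)$, and transpose lies in $\OO(V)\setminus\SO(V)$ up to conjugation. After splitting off the scalar coordinates as parameters, \cite[Proposition~1.5]{JSZ} applied with $r=2<3$ shows that every invariant tempered generalized function on $X$ is $\OO(V)$-invariant. Such a function is therefore transpose-invariant, because transpose equals an element of $\OO(V)$ composed with conjugation. The passage from $X$ to the open set $\GL_2(\C)=\det_\C^{-1}(\C^\times)$ follows from \eqref{eq:real-localization}, applied to the odd part.

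The main obstacle is the first step. We must check carefully that Hilbert~90 yields $K=E^\times\GL_2(F)$ in coordinates where $\tau_2$ becomes the plain transpose on $(A,B)$, with no twisting by $J$ that would turn the involution into $A\mapsto JA^tJ$. If twisting occurs, it is absorbed by $K$-invariance. A second point is to justify passing from $X$ to $Y$ through the scalar cover without assuming extension of distributions from open sets. For this we use localization over the scalar parameters, as in the paper's stated policy.
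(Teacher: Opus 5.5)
Your proposal is correct and follows the paper's proof essentially step by step. The steps match: Hilbert 90 reduces $K$-conjugation to $\GL_2(F)$-conjugation on $(A,B)$, the non-Archimedean case uses \cite[Proposition~4.5]{Prasad} on $Y=\GL_2(F)^2$ extended to $X$ by the scalar-translate cover, the real case uses \cite[Proposition~1.5]{JSZ}, and localization along $\det_E$ passes to $\GL_2(E)$. The points you leave open are settled concretely in the paper: with $P_0=\diag(1,\delta)$, the twist is conjugation by $Q_0=P_0^{-2}\in\GL_2(F)$, and the cited result of Prasad actually gives invariance under $(A,B)\mapsto(\operatorname{adj}A,\operatorname{adj}B)$, which becomes simultaneous transpose via $\operatorname{adj}(A)=J_0A^tJ_0^{-1}$, $J_0=\bigl(\begin{smallmatrix}0&1\\-1&0\end{smallmatrix}\bigr)$, together with conjugation-invariance.
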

\begin{proof}
A change of basis identifies the conjugation action of $K$
with that of $\GL_2(F)$. Indeed, choose $\delta\in E^\times$
with $\bar\delta=-\delta$, and set
\[
 P_0=\diag(1,\delta),\quad
 J_0=\begin{pmatrix}0&1\\-1&0\end{pmatrix},\quad
 Q_0=P_0^{-2}=\diag(1,\delta^{-2})\in\GL_2(F).
\]
Since $P_0^\dagger JP_0=\delta J_0$, the similitude identity for
$C_0=P_0^{-1}kP_0$ and the identity
$C_0^tJ_0C_0=(\det C_0)J_0$ give
$\bar C_0=(\lambda(k)/\det C_0)C_0$. The scalar on the right
has norm one. Hilbert's theorem 90 therefore gives $z_0\in E^\times$
such that $\bar z_0/z_0=\lambda(k)/\det C_0$, and
$z_0^{-1}C_0\in\GL_2(F)$. The converse inclusion follows from
the same determinant identity. Thus,
\begin{equation}\label{eq:Kfactor}
                         K=E^\times P_0\GL_2(F)P_0^{-1}.
\end{equation}

Under this change of basis, write
$z=P_0(A+\delta B)P_0^{-1}$ and set
\[
 X=M_2(F)^2,\qquad Y=\GL_2(F)^2,\qquad
 X_E^\times=\{(A,B)\in X:\det(A+\delta B)\ne0\}.
\]
This change of coordinates identifies $\GL_2(E)$ with $X_E^\times$;
denote the pullback of $T$ to $X_E^\times$ by $T_0$. By~\eqref{eq:Kfactor},
$T_0$ is invariant under
$(A,B)\mapsto(gAg^{-1},gBg^{-1})$, $g\in\GL_2(F)$.
Since $P_0^t=P_0$, we have
\[
 P_0^{-1}z^tP_0
 =P_0^{-2}(A^t+\delta B^t)P_0^2
 =Q_0(A^t+\delta B^t)Q_0^{-1}.
\]
Thus transpose on $\GL_2(E)$ induces simultaneous transpose
followed by simultaneous conjugation by $Q_0$. As
$Q_0\in\GL_2(F)$, it remains to prove that $T_0$ is invariant
under $\tau_0(A,B)=(A^t,B^t)$. In both field settings, this
will follow by localization from the corresponding invariance
on $X$, with tempered generalized functions understood over $\R$.

For $F=\R$, take $\delta=i$ and separate the scalar and traceless
coordinates:
\[
 A=aI_2+A_0,\qquad B=bI_2+B_0,\qquad
 a=\tfrac12\tr A,\quad b=\tfrac12\tr B,\qquad
 A_0,B_0\in V:=\mathfrak{sl}_2(\R).
\]
This identifies $X$ with $\R^2\times V^2$. Equip $V$ with
$q(v)=\tr(v^2)$. The conjugation action
$g\cdot v=gvg^{-1}$ of the full $\GL_2(\R)$ has image
$\SO(V,q)$; see~\cite[proof of Proposition~7.1]{JSZ}.
In the basis
\[
 \begin{pmatrix}1&0\\0&-1\end{pmatrix},\qquad
 \begin{pmatrix}0&1\\1&0\end{pmatrix},\qquad
 J_0,
\]
the form $q$ has matrix $\diag(2,2,-2)$ and transpose has matrix
$\diag(1,1,-1)$. Hence $q$ is nondegenerate, and transpose
belongs to $\OO(V,q)\setminus\SO(V,q)$.
The result in~\cite[Proposition~1.5]{JSZ} states that, for a
nondegenerate real quadratic space $V_0$ and $r<\dim V_0$,
\[
 \ctemp(V_0^r)^{\SO(V_0)}=\ctemp(V_0^r)^{\OO(V_0)}.
\]
Since $\dim V=3$, the result applies to $V^2$.
To include the scalar coordinates, let $\OO(V,q)$ act diagonally
on $V^2$ and trivially on $\R^2$, and put
$\varepsilon(g)=\det_V(g)$.
For $f\in\ctemp_\varepsilon(X)$ and a Schwartz density
$\rho\in\mathcal S(\R^2,\operatorname{Dens}_{\R^2})$, define
\[
 f_\rho(\varphi)=f(\rho\boxtimes\varphi),\qquad
 \varphi\in\mathcal S(V^2,\operatorname{Dens}_{V^2}).
\]
Then $f_\rho\in\ctemp_\varepsilon(V^2)$. In particular it is
$\SO(V,q)$-invariant, so the cited result makes it
$\OO(V,q)$-invariant. Since it also transforms by
$\varepsilon$, we have $f_\rho=0$. Finite sums of product
Schwartz densities are dense in the Schwartz densities on
$\R^2\times V^2$; thus $f=0$, proving
\begin{equation}\label{eq:real-matrix-odd}
                         \ctemp_\varepsilon(X)=0.
\end{equation}

The passage from $X$ to $X_E^\times$ follows from
\cite[Lemma~2.8]{JSZ}, restated in~\eqref{eq:real-localization}:
if a Nash group $L$ acts on
a Nash manifold $M$, $p:M\to N$ is an $L$-invariant Nash map,
and $\ctemp_\theta(M)=0$, then
$\ctemp_\theta(p^{-1}(N_0))=0$ for every Nash open
$N_0\subset N$. Here take $L=\OO(V,q)$, $\theta=\varepsilon$,
$M=X$, $N=\C$, $N_0=\C^\times$, and
\[
 p(A,B)=\det_{\C}(A+iB)
       =(a+ib)^2-\tfrac12\tr\bigl((A_0+iB_0)^2\bigr).
\]
The last expression is $L$-invariant because $L$ preserves
$q$ and its associated bilinear form. The map $p$ is polynomial
in real coordinates, and $\C^\times$ is Nash open in $\C$.
Consequently~\eqref{eq:real-matrix-odd} gives
\[
                   \ctemp_\varepsilon(X_E^\times)=0.
\]

Therefore $T_0-\tau_0^*T_0$ transforms by $\varepsilon$ and
must vanish. This proves transpose-invariance of $T$ for $F=\R$.

For non-Archimedean $F$, the required invariance on $Y$ follows
from~\cite[Proposition~4.5]{Prasad}: every distribution on $Y$
invariant under simultaneous conjugation is also invariant under
$(A,B)\mapsto(\operatorname{adj}(A),\operatorname{adj}(B))$.
Then, the identity $\operatorname{adj}(A)=J_0A^tJ_0^{-1}$ and
conjugation-invariance give simultaneous transpose-invariance of a $\mathrm{GL}_2(F)$-adjoint invariant distribution on $Y$.

The invariance on $Y$ extends to $X$ by scalar translation.
Indeed, consider the invariant open subsets
\[
 V_{r,s}=\{(A,B)\in X:\det(A+rI_2)\det(B+sI_2)\ne0\},
 \qquad r,s\in F.
\]
The map $(A,B)\mapsto(A+rI_2,B+sI_2)$ identifies $V_{r,s}$
with $Y$ and commutes with conjugation and transpose.
These subsets cover $X$, since $F$ is infinite and each
of the two determinant polynomials has only finitely many roots.
An invariant distribution negated by transpose consequently
vanishes on every $V_{r,s}$, and hence on $X$.

For the passage to $X_E^\times$, each fiber of
$(A,B)\mapsto\det(A+\delta B)$ over $E^\times$
is closed in $X$ and is preserved by conjugation and transpose.
By~\eqref{eq:closed-support-tools}, pushforward under this closed
inclusion embeds its invariant odd distributions into
the corresponding space on $X$, which is zero. Applying
Theorem~\ref{thm:localization}, the localization principle
of~\cite{Bernstein}, on $X_E^\times$ therefore proves the required
vanishing. No assumption on the residue characteristic is used.
\end{proof}

\begin{lemma}\label{lem:anisotropic-linear}
Suppose $H=\diag(1,d)$ is anisotropic. In the non-Archimedean
case, let $T$ be a distribution on $\GL_2(E)$; in the real case,
let $T$ be a generalized function on $\GL_2(\C)$. If $T$ is
invariant under conjugation by $K$, then it is invariant under
$\tau_2(z)=H^{-1}z^tH$. No temperedness hypothesis is needed
in the real case.
\end{lemma}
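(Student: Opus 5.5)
The approach is the averaging argument sketched in the outline. When $H$ is anisotropic, the group $K/E^\times$ is compact: $K^1=\{k:k^\dagger Hk=H\}$ is a compact unitary group, and $K=E^\times K^1$ up to a finite-index similitude quotient. In any case $\lambda(K)/N(E^\times)$ is finite, so $K/E^\times$ is a compact $l$-group or a compact Lie group. The central subgroup $E^\times$ acts trivially by conjugation on $\GL_2(E)$, so conjugation factors through $K/E^\times$.

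With normalized Haar measure $dk$ on $K/E^\times$, define for a test function (or test density) $\varphi$
\[
 \mathcal A\varphi(z)=\int_{K/E^\times}\varphi(kzk^{-1})\,dk .
\]
Conjugation preserves the bi-invariant Haar density on $\GL_2(E)$, so densities cause no difficulty. The operator $\mathcal A$ maps compactly supported smooth (or locally constant) functions to functions of the same kind, because the support of $\mathcal A\varphi$ lies in the compact set $K\cdot\supp\varphi$. No temperedness is needed for this, which is why the statement allows arbitrary generalized functions in the real case.

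\textbf{Step 1.} Show that $T(\varphi)=T(\mathcal A\varphi)$. In the non-Archimedean case, $\mathcal A\varphi$ is a finite average over $K_0\backslash K/E^\times$ for an open compact $K_0$ fixing $\varphi$, and invariance of $T$ gives the identity at once. In the real case, the map $k\mapsto\varphi(k\cdot k^{-1})$ is continuous into $C_c^\infty$ with supports in a fixed compact set. The integral is therefore a convergent Bochner/Riemann integral in that Fréchet space, and $T$ commutes with it.

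\textbf{Step 2.} Show that $\tau_2$ intertwines conjugation by $K$. For $k\in K$ we have $\tau_2(kzk^{-1})=\tau_2(k)^{-1}\tau_2(z)\tau_2(k)$, and $\tau_2(k)\in K$ because $\tau_2$ preserves $K$. Since $k\mapsto\tau_2(k)^{-1}$ preserves Haar measure on the compact group, $\mathcal A(\varphi\circ\tau_2)=(\mathcal A\varphi)\circ\tau_2$.

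\textbf{Step 3.} This is the key point, and the step I expect to be the main obstacle. We must show that $\tau_2(z)$ is $K$-conjugate to $z$ for every $z\in\GL_2(E)$. Then $\mathcal A\varphi$, being constant on $K$-orbits, satisfies $(\mathcal A\varphi)\circ\tau_2=\mathcal A\varphi$. Combining the three steps gives
\[
 T(\varphi\circ\tau_2)=T(\mathcal A(\varphi\circ\tau_2))=T((\mathcal A\varphi)\circ\tau_2)=T(\mathcal A\varphi)=T(\varphi).
\]

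To prove the conjugacy, I would exploit the fact that $\tau_2(z)=H^{-1}z^tH$ and $z^*$-type adjoints interact with complex conjugation. The $H$-adjoint $z^{\#}=H^{-1}z^\dagger H$ is $K$-conjugate to $z$ only in special cases, so I would instead use two facts. First, orbits of the compact group $K/E^\times$ are closed. Second, the orbits are separated by the invariants $\tr(z)$, $\det z$, $\tr(zz^{\#})$ together with higher words in $z$ and $z^{\#}$. Since $z^t=\overline{z^\dagger}$, we get $\tau_2(z)=\overline{H^{-1}}\,\overline{z^\dagger}\,H=\bar H\,\overline{z^{\#}}\,\bar H^{-1}$, and here $H$ is real.

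An alternative, more concrete route is a direct normal form. Using the polar or Schur decomposition with respect to the Hermitian form $H$ (unitary triangularization is available over $\C$; for non-Archimedean $F$ one needs an anisotropic substitute), one reduces $z$ modulo $K$-conjugation to a form that is visibly $\tau_2$-fixed up to a further element of $K$. For example, one can check that the element $\diag(1,-1)\in K$, or a suitable anti-diagonal similitude, carries $\tau_2(z)$ to $z$ on a dense open set of generic $z$.

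If orbitwise conjugacy fails for some $z$, the fallback is to check the equality $(\mathcal A\varphi)\circ\tau_2=\mathcal A\varphi$ only on a dense set, using continuity in the real case. In the non-Archimedean case the fallback is to verify the conjugacy for all $z$ by the explicit case split into the scalar case, the case where $z$ has an eigenvector in $E^2$, and the case where the characteristic polynomial is irreducible. In each case one exhibits an element of $K$ conjugating $\tau_2(z)$ to $z$.
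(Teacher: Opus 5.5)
\textbf{Steps 1--2.} These are correct and are exactly the paper's averaging argument: compactness of $K/E^\times$, $T(\mathcal A\varphi)=T(\varphi)$, and $\mathcal A(\varphi\circ\tau_2)=(\mathcal A\varphi)\circ\tau_2$. You should also say that $\tau_2$, and not only conjugation, preserves the Haar density.

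\textbf{The gap is Step 3.} This step is the whole content of the lemma. You never prove that $\tau_2(z)$ is $K$-conjugate to $z$, and none of your suggested routes establishes it.

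\emph{The fixed-element route fails.} Suppose a fixed $k$, such as $\diag(1,-1)$ or a fixed anti-diagonal similitude, satisfied $k\tau_2(z)k^{-1}=z$ on a dense open set. By continuity the identity would hold on all of $M_2(E)$. Then the anti-automorphism $\tau_2$ would be an inner automorphism of the noncommutative algebra $M_2(E)$, which is impossible. Concretely, $\tau_2\bigl(\begin{smallmatrix}a&b\\c&e\end{smallmatrix}\bigr)=\bigl(\begin{smallmatrix}a&dc\\b/d&e\end{smallmatrix}\bigr)$, and conjugation by $\diag(1,-1)$ gives $\bigl(\begin{smallmatrix}a&-dc\\-b/d&e\end{smallmatrix}\bigr)\neq z$. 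So the conjugating element must depend on $z$.

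\emph{The invariant-theoretic route is only a sketch.} Over $\C$ you could finish it by citing the classical criterion that $2\times2$ matrices are unitarily similar iff $\tr z$, $\tr z^2$ and $\tr z^\dagger z$ agree, and then checking these for $z^t$; you do neither. Over a non-Archimedean field, agreement of polynomial invariants does not by itself give $K$-conjugacy of $F$-points. You would need a Witt-type or cohomological argument, and none is supplied.

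\emph{The fallbacks do not help.} The ``dense set plus continuity'' fallback presupposes the conjugacy it is meant to replace. The non-Archimedean case split is only announced, not carried out.

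\textbf{The missing idea.} The paper uses one quadratic-space construction, valid for all $z$ and both fields.
\begin{itemize}
\item Write $z=cI_2+A_0+\delta B_0$ with $c=\frac12\tr z$ and $\bar\delta=-\delta$. Here $A_0,B_0$ lie in the three-dimensional $F$-space $V_0=\{X:X^\dagger H=HX,\ \tr X=0\}$, whose elements are $X=\bigl(\begin{smallmatrix}r&d\bar u\\u&-r\end{smallmatrix}\bigr)$ with $r\in F$, $u\in E$.
\item On $V_0$ the form $q(X)=\tr(X^2)=2(r^2+dN(u))$ is anisotropic because $H$ is.
\item The two $F$-linear conditions $\tr(vA_0)=\tr(vB_0)=0$ have a nonzero solution $v\in V_0$.
\item Cayley--Hamilton gives $v^2=\frac12q(v)I_2$ with $q(v)\neq0$. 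Hence $v^\dagger Hv=Hv^2=\frac12q(v)H$, so $v\in K$.
\item Polarization gives $vX+Xv=\tr(vX)I_2$ for $X\in V_0$, so conjugation by $v$ sends $A_0\mapsto -A_0$ and $B_0\mapsto -B_0$.
\item The fixed element $R=\bigl(\begin{smallmatrix}0&-d\\1&0\end{smallmatrix}\bigr)$ satisfies $R^\dagger HR=dH$ and $RXR^{-1}=-\tau_2(X)$ for $X\in V_0$.
\item Therefore $k=Rv\in K$ satisfies $kzk^{-1}=cI_2+\tau_2(A_0)+\delta\tau_2(B_0)=\tau_2(z)$.
\end{itemize}
With this conjugacy in hand, your three steps complete the proof.
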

\begin{proof}
The quotient $K/E^\times$ is compact, which permits averaging
test functions over the conjugation action. Indeed, it identifies
with the closed subset
\[
 \{[k]\in\mathbb P(M_2(E)):k^\dagger Hk\in FH\}
\]
of the compact projective space $\mathbb P(M_2(E))\simeq\mathbb P^3(E)$.
If a nonzero matrix $k$ satisfies $k^\dagger Hk=\lambda H$,
then $\lambda=0$ would make both columns of $k$ isotropic for $H$,
forcing $k=0$. Thus $\lambda\ne0$, and the same identity makes
$k$ invertible. The displayed subset is therefore exactly
$K/E^\times$.

To see that $z$ and $\tau_2(z)$ lie in the same $K$-orbit,
consider
$V_0=\{A\in M_2(E):A^\dagger H=HA,\ \tr A=0\}$.
This is a three-dimensional $F$-space, whose elements and
quadratic form are
\[
 A=\begin{pmatrix}r&d\bar u\\u&-r\end{pmatrix},\quad
 r\in F,\ u\in E,\qquad q(A)=\tr(A^2)=2(r^2+dN(u)).
\]
The form $q$ is anisotropic because $H$ is anisotropic.
Choose $\delta\in E^\times$ with
$\bar\delta=-\delta$. Put $c=\tfrac12\tr z$ and $z_0=z-cI_2$.
Then
\[
 z=cI_2+A_0+\delta B_0,\qquad
 A_0=\frac{z_0+H^{-1}z_0^\dagger H}{2},\quad
 B_0=\frac{z_0-H^{-1}z_0^\dagger H}{2\delta}
\]
with $A_0,B_0\in V_0$.

A nonzero vector perpendicular to both $A_0$ and $B_0$ gives a
conjugation sending them to their negatives. The trace form is
$F$-valued on $V_0$, so
$\tr(vA_0)=\tr(vB_0)=0$ are two $F$-linear conditions on $v$.
Since $\dim_F V_0=3$, they admit a nonzero solution $v\in V_0$.
Cayley--Hamilton and polarization give
\[
 v^2=\tfrac12q(v)I_2,\qquad
 vX+Xv=\tr(vX)I_2\quad(X\in V_0).
\]
Anisotropy gives $q(v)\ne0$, so $v$ is invertible and
$v^\dagger Hv=Hv^2=\tfrac12q(v)H$; hence $v\in K$.
The second identity now gives
$vA_0v^{-1}=-A_0$ and $vB_0v^{-1}=-B_0$.

The desired conjugating matrix is obtained using
\[
 R=\begin{pmatrix}0&-d\\1&0\end{pmatrix},\qquad
 R^\dagger HR=dH,\qquad
 RXR^{-1}=-\tau_2(X)\quad(X\in V_0),
\]
where the last two identities follow by direct multiplication.
Consequently $k=Rv\in K$ satisfies
\begin{equation}\label{eq:anisotropic-conjugacy}
 kzk^{-1}=cI_2+\tau_2(A_0)+\delta\tau_2(B_0)=\tau_2(z).
\end{equation}

The conjugacy in~\eqref{eq:anisotropic-conjugacy} gives
invariance of $T$ by averaging, as follows. Over $\R$, fix an
additive density $dZ$ on $M_2(\C)$.
Conjugation has complex determinant $1$ and $\tau_2$ has complex
determinant $-1$, so both preserve $dZ$; in this case write
$T(\varphi)$ for $T(\varphi\,dZ)$.
Give $K/E^\times$ Haar measure $dk$ of total mass one and set
\[
 \mathcal A\varphi(z)=\int_{K/E^\times}\varphi(kzk^{-1})\,dk.
\]
For a compactly supported test function $\varphi$, this average
is again smooth, or locally constant in the non-Archimedean
case. Its support lies in the compact set
$(K/E^\times)\cdot\supp\varphi$. Moreover, $\mathcal A\varphi$
is constant on $K$-conjugacy orbits, so
\eqref{eq:anisotropic-conjugacy} gives
$\mathcal A\varphi(\tau_2(z))=\mathcal A\varphi(z)$.

Averaging also commutes with $\tau_2$. Indeed,
$[k]\mapsto[\tau_2(k)^{-1}]$ is an automorphism of $K/E^\times$
and preserves its normalized Haar measure. Using
\[
 \tau_2(kzk^{-1})
   =\tau_2(k)^{-1}\tau_2(z)\tau_2(k)
\]
in the defining integral gives
$\mathcal A(\varphi\circ\tau_2)
 =(\mathcal A\varphi)\circ\tau_2=\mathcal A\varphi$.
Finally, conjugation-invariance and continuity of $T$ give
\begin{align*}
 T(\mathcal A\varphi)
 &=\int_{K/E^\times}T\bigl(\varphi(k\,\cdot\,k^{-1})\bigr)\,dk
   =T(\varphi),\\
 T(\varphi\circ\tau_2)
 &=T\bigl(\mathcal A(\varphi\circ\tau_2)\bigr)
   =T(\mathcal A\varphi)=T(\varphi).
\end{align*}
This proves the lemma in both settings.
\end{proof}

\begin{proposition}\label{prop:open}
On $O$ there are no nonzero $\widetilde\chi$-equivariant tempered
generalized functions in the real case, or distributions in the
non-Archimedean case.
\end{proposition}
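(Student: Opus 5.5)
We reduce Proposition~\ref{prop:open} to Lemmas~\ref{lem:linear} and~\ref{lem:anisotropic-linear}. First we use the Gauss decomposition of the open cell to strip off the unipotent directions, and then we restrict to a Levi slice. By Lemma~\ref{lem:rank-common}, $O=UM\tau(U)$, and the multiplication map $U\times M\times\tau(U)\to O$ is an isomorphism of varieties (Nash over $\R$). Write $g=u\,m(a,k)\,\tau(u')$. Using $m(a,k)=L_{ak^{-1}}D(k)$, the group $\HH$ acts on the coordinates as follows:
\begin{itemize}
\item the left factor $D(k_1)u_1$ acts by left multiplication on $U$ and by $(a,k)\mapsto(k_1a,k_1k)$;
\item the right factor $D(k_2)u_2$ acts through $\tau(u_2)\tau(D(k_2))$ on the right, sending $(a,k)\mapsto(a\tau_2(k_2),k\tau_2(k_2))$ after moving $\tau(D(k_2))$ past $\tau(U)$.
\end{itemize}
Thus the map
\[
 \HH\times \GL_2(E)\to O,\qquad (h,z)\mapsto h\cdot L_z
\]
is a surjective submersion, with $z=ak^{-1}$. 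It is an $\HH$-slice in the sense of the slice restriction of Section~\ref{subsec:padic-tools}, and of \cite[Lemma~2.7]{JSZ} over $\R$ (Nash, so temperedness is preserved).

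Next we compute the residual equivariance on the slice $\{L_z\}$. The stabilizing data consist of pairs $(D(k)u_1,D(k')u_2)$ with $D(k)u_1L_z\tau(u_2)\tau(D(k'))=L_{z'}$. Uniqueness in the Gauss decomposition forces $u_1=1=u_2$ and $k\tau_2(k')=1$. This gives the action $z\mapsto kzk^{-1}$ of $K$ with character
\[
 \omega(k)\,\omega(\tau_2(k)^{-1})=1,
\]
because $\tau_2$ preserves $\det$ and $\lambda$. The unipotent characters drop out: the pair $(u,u')$ is a free coordinate, and the equivariance merely fixes the dependence on it. There is a density check here. Bi-invariant Haar densities on $G$ and $\GL_2(E)$ are preserved by $\tau$ and $\tau_2$, and by \eqref{eq:unipotent-density} the factor $D(K)$ preserves the measure on $U$, so no modulus character appears. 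Hence an equivariant $f$ on $O$ restricts injectively to a $K$-conjugation-invariant $T$ on $\GL_2(E)$.

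We then track the involution. Since $\tau(u\,L_z\,\tau(u'))=u'\,\tau(L_z)\,\tau(u)$ and $\tau(L_z)=m(\tau_2(z),\ldots)$, the involution $\tau$ on $O$ corresponds, up to swapping the $U$-coordinates (absorbed in $\HH$ via $\sigma$), to $z\mapsto\tau_2(z)$ on the slice. Here one must check that $\tau(L_z)=L_{\tau_2(z)}$ up to the $D(K)$-ambiguity, using $\lambda(I)=1$ and $\tau_2(z^*)=\tau_2(z)^*$. Consequently an odd $f$ restricts to a $K$-conjugation-invariant $T$ with $T\circ\tau_2=-T$.

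Lemma~\ref{lem:linear} in the hyperbolic case ($\tau_2$ is the transpose) and Lemma~\ref{lem:anisotropic-linear} in the anisotropic case show that $T\circ\tau_2=T$. Combined with $T\circ\tau_2=-T$, this gives $T=0$, and injectivity of slice restriction gives $f=0$.

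The main obstacle is the middle bookkeeping: verifying that the slice restriction carries exactly the claimed equivariance, with trivial character and no modulus factor, and that $\tau$ descends precisely to $\tau_2$. In the real case one must additionally check that temperedness survives the Gauss isomorphism, which is a Nash isomorphism, so \cite[Lemma~2.7]{JSZ} applies.
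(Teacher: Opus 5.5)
Your proposal is correct and follows essentially the same route as the paper. The Gauss isomorphism $(U\times S)\times\{L_z\}\to O$ makes $\{L_z\}$ a slice. Its stabilizer $\{(D(k),D(\tau_2(k)^{-1}))\}$ acts by $z\mapsto kzk^{-1}$ with trivial character, and $\tau$ acts by $\tau_2$ (in fact $\tau(L_z)=L_{\tau_2(z)}$ holds exactly, with no $D(K)$-ambiguity). Lemmas~\ref{lem:linear} and~\ref{lem:anisotropic-linear} then force the odd restricted generalized function to vanish.
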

\begin{proof}
Restriction to the matrices $L_z$ reduces the proposition to
Lemmas~\ref{lem:linear} and~\ref{lem:anisotropic-linear}.
To use generalized functions in both settings, let $T$ be a
$\widetilde\chi$-equivariant distribution on $O$ in the
non-Archimedean case. Choose a bi-invariant Haar density $dg$
on $G$ and associate to $T$ the generalized function
$f(\varphi\,dg)=T(\varphi)$. The density is preserved by the
left and right translations defining the $\HH$-action.
It is also preserved by $\tau$: its image is a positive multiple
of $dg$, and $\tau^2=1$ forces this multiple to be one.
Thus $f$ has the same $\widetilde\chi$-equivariance as $T$.
In the real case, let $f$ be a $\widetilde\chi$-equivariant
tempered generalized function on $O$.

For this restriction, consider the action map
\[
 \Sigma=\{L_z:z\in\GL_2(E)\},\qquad
 \rho:\widetilde\HH\times\Sigma\longrightarrow O,
       \quad(h,L_z)\longmapsto h\cdot L_z.
\]
The Gauss decomposition $O=UM\tau(U)$ and the identity
$m(a,k)=L_{ak^{-1}}D(k)$ show that
\[
 (U\times S)\times\Sigma\longrightarrow O,
 \qquad ((u,s),L_z)\longmapsto uL_z\tau(s)
\]
is an analytic isomorphism, and a Nash isomorphism over $\R$.
Indeed, writing $s=vD(\tau_2(k))$ gives the Gauss coordinates
$uL_zD(k)\tau(v)=u m(zk,k)\tau(v)$, so $z=ak^{-1}$.
Consequently $\rho$ is surjective and its differential is
surjective at $(1,L_z)$; equivariance gives the same at every
point. Thus $\Sigma$ is a $\widetilde\HH$-slice.

The slice is preserved by $\tau$ and by the subgroup
$\iota(K)\subset\HH$ defined, for $k\in K$, by
\[
 \iota(k)=\bigl(D(k),D(\tau_2(k^{-1}))\bigr)\in\HH.
\]
The subgroup $\iota(K)$ is closed, and Nash over $\R$.
Direct multiplication gives
\[
 \iota(k)\cdot L_z=D(k)L_zD(k^{-1})=L_{kzk^{-1}},\qquad
 \tau(L_z)=L_{\tau_2(z)}.
\]
Moreover, since $\tau_2$ preserves determinant and similitude
factor,
\[
 \widetilde\chi(\iota(k))
   =\omega(k)\omega(\tau_2(k^{-1}))=1,\qquad
 \widetilde\chi(\tau)=-1.
\]

Since $\rho$ is a surjective submersion, restriction to $\Sigma$
is injective. More precisely, the result in
\cite[Lemma~2.13]{JLX} in the non-Archimedean case, and
\cite[Lemma~2.7]{JSZ} in the real case, defines restriction by
\[
                 \rho^*f=\widetilde\chi\otimes t,
\]
where $t$ is a generalized function on $\Sigma$. This restriction
preserves the restricted character under any closed subgroup
preserving $\Sigma$. In the Nash setting it
also preserves temperedness, by the paragraph following
\cite[Lemma~2.8]{JSZ}; see
Section~\ref{subsubsec:real-localization}.
Applying these statements to $\rho$, $\iota(K)$, and
$\langle\tau\rangle$, and identifying $\Sigma$ with $\GL_2(E)$,
we obtain
\[
             t(kzk^{-1})=t(z),\qquad t(\tau_2(z))=-t(z)
\]
in the sense of generalized functions.

The two matrix lemmas force $t$ to vanish. In the
non-Archimedean case, a bi-invariant Haar density on $\GL_2(E)$
identifies $t$ with a distribution; this density is preserved
by conjugation and $\tau_2$, so both displayed properties are
unchanged. Lemma~\ref{lem:linear} for $H=J$, or
Lemma~\ref{lem:anisotropic-linear} for anisotropic $H$, states
that such a conjugation-invariant distribution, or tempered
generalized function over $\R$, is $\tau_2$-invariant.
Thus $t$ is both fixed and negated by $\tau_2$, whence $t=0$.
Injectivity of restriction gives $f=0$, proving the proposition.
\end{proof}

\subsection{The quasi-split stratum 3221}
\label{sec:common3221}

Throughout this subsection $H=J$. The boundary argument on
$T=G_{3221}$ uses a closed filtration
$T\supset C\supset C_0\supset Z$.
On $T\setminus C$ and $C_0\setminus Z$, the stabilizers contain
unipotent elements on which $\chi$ is nontrivial. The remaining
pieces $C^\times=C\setminus C_0$ and $Z$ admit normal forms
used in the distribution arguments in Section~\ref{sec:padic} and
the normal-vector calculations in Section~\ref{sec:real}.
All formulas hold in both local-field settings. Recall that
$E^-=\{z\in E:\bar z=-z\}$ and $N(z)=z\bar z$.

Let $w=(1\ 4)(3\ 6)$ be the permutation matrix representing $T$,
and put $L_A=m(A,I_2)$. We write
\begin{equation}\label{eq:AB}
 g(A,B)=L_AwL_B,\qquad
 A=\begin{pmatrix}a&b\\c&d\end{pmatrix},\quad
 B=\begin{pmatrix}m&n\\p&q\end{pmatrix}\in\GL_2(E).
\end{equation}
By~\eqref{eq:rank-cover}, the map
\begin{equation}\label{eq:cover}
 \HH\times\GL_2(E)^2\longrightarrow T,\qquad
 (h,A,B)\longmapsto h\cdot g(A,B)
\end{equation}
is a surjective analytic submersion, and a Nash submersion
over $\R$.

\subsubsection{The filtration and stabilizer characters}

\begin{lemma}\label{lem:3221-filtration}
The conditions
\begin{align}
 C &: c=n, & C_0 &: c=n=0,\label{eq:C}\\
 Z &: c=n=0,\quad dq/a\in F^\times,\quad dq/m\in F^\times
                                                        \label{eq:Z}
\end{align}
on the inverse images under~\eqref{eq:cover} define a
$\widetilde\HH$-stable closed filtration
\begin{equation}\label{eq:filtration}
                 T\supset C\supset C_0\supset Z\supset\varnothing.
\end{equation}
The submanifolds $T,C,C_0,Z$ are smooth of dimensions
$35,33,31,29$ over $F$, respectively.

The set $C$ is the locus where $\chi$ is trivial on the
$U\times U$-stabilizer; $C_0$ is the locus in $C$ where $\xi$
is trivial on its first projection; and $Z$ is the locus in
$C_0$ where $\chi$ is trivial on the full $\HH$-stabilizer.
On $T\setminus C$ and $C_0\setminus Z$, the character is
nontrivial on some unipotent element of the stabilizer.
\end{lemma}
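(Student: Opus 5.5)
The plan is to compute everything in the coordinates $g(A,B)=L_AwL_B$ given by the submersion~\eqref{eq:cover}. The first step is to write out $g(A,B)$ explicitly for $w=(1\ 4)(3\ 6)$. Then I determine how the $\HH$-stabilizer of $g(A,B)$ sits inside $P\times\tau(P)$. Since $T$ is a single $P\times\tau(P)$ orbit, the stabilizer of $w$ in $P\times\tau(P)$ is $\{(p,\tau(q)):pw=w\tau(q)^{-1}\}$, which is computable from $w^{-1}Pw\cap\tau(P)$. Conjugating by $(L_A,L_B)$ gives the stabilizer of $g(A,B)$, and intersecting with $\HH=S\times S$ gives the $\HH$-stabilizer.

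Next I take the pieces of the stabilizer in order.

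1. \emph{The $U\times U$ part.} This is a group of pairs $(u_1(X_1,Y_1),u_2(X_2,Y_2))$ whose entries are linear in a few free parameters. Evaluating $\chi=\xi\boxtimes\xi$ on it gives $\psi\bigl(\Tr(\ell(A,B;\cdot))\bigr)$ for some $E$-linear or $F$-linear functional $\ell$. I expect triviality for all parameters to reduce to the single equation $c=n$, after normalizing $A,B$ modulo the part of $\GL_2(E)^2$ absorbed by $\HH$.
2. \emph{The first projection.} On $C$ I restrict $\xi$ to the first projection of the stabilizer. I expect the extra condition to be $c=0$, and hence $n=0$.
3. \emph{The $D(K)$ part.} On $C_0$ the stabilizer also contains elements with nontrivial $D(K)$ components. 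I solve for $k\in K$ together with compensating unipotent parts. The condition that $\omega$ and the accompanying $\xi$-values cancel should come out as $dq/a,dq/m\in F^\times$, which defines $Z$.

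On $T\setminus C$ and on $C_0\setminus Z$, the elements found in steps 1 and 2 are unipotent and have nontrivial character, which gives the last assertion of the lemma.

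For closedness and stability, I first check that each condition is invariant under the residual action on $(A,B)$. That is, the conditions depend only on the $\HH$-orbit: pairs related by $(A,B)\mapsto(kAk',k''Bk''')$, with the $K$ factors absorbed as in~\eqref{eq:rank-cover}, satisfy the same equations. The intrinsic descriptions in terms of characters on stabilizers make this automatic, since stabilizers are conjugated along orbits and conjugation preserves the relevant characters. I also check that $\tau$ interchanges $A$ and $B$ up to transpose, sending $c\leftrightarrow n$ and $a\leftrightarrow m$. This symmetry makes each condition $\widetilde\HH$-stable.

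The conditions are closed in the source $\HH\times\GL_2(E)^2$. Each locus is a union of fibers of~\eqref{eq:cover}, so by the saturation statement~\cite[Lemma~2.12]{JLX} its image is a closed submanifold. Each condition is transversal: $c=n$ is one $E$-linear equation, $c=n=0$ adds another, and each of $dq/a\in F^\times$ and $dq/m\in F^\times$ removes one $F$-dimension, since $E^-$ is a line. Subtracting $2$, $2$, $1$, $1$ real $F$-dimensions from $\dim_F T=35$ gives $33$, $31$, and $29$. The value $35$ itself comes from $\dim_F G=37$ minus $2=\dim_F$ of the normal space, which I read off from $r_{22}$ dropping to $1$.

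The main obstacle is step 3. Solving $pg=g\tau(q)^{-1}$ with $p,q\in S$ requires controlling how $D(K)$ interacts with the nontrivial $L_A$ and $L_B$ conjugation, and then computing $\omega(k)$ against the $\xi$-values. I would first simplify by using $\HH$ to normalize $A$ and $B$ to be upper triangular. I would then guess that the stabilizer's reductive part is a torus $\{\diag(\zeta,\bar\zeta^{-1}\lambda)\}$, and verify this guess directly.
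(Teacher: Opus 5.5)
Your overall route is the paper's. You transport the stabilizer of $w$ (that is, $P\cap wP^tw$) by $L_A,L_B$ and read off $C$ and $C_0$ from the $U\times U$-stabilizer. Its character at $g(A,B)$ is $\psi(\Tr((c-n)z))$ for every $(A,B)$, so no normalization is needed, and this general formula is what gives saturation directly. You then count codimensions and use saturation. Your coordinate check $\tau(g(A,B))=g(B^t,A^t)$, which swaps $c\leftrightarrow n$, $a\leftrightarrow m$, $d\leftrightarrow q$, is a valid alternative to the paper's intrinsic transpose argument.

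The gap is the last assertion on $C_0\setminus Z$. You say the elements from steps 1 and 2 have nontrivial character there. But on $C_0$ we have $c=n=0$, so the $U\times U$-stabilizer character $\psi(\Tr((c-n)z))$ and the first-projection character $\psi(\Tr(cz))$ are both trivial. Steps 1--2 only dispose of $T\setminus C$. The obstruction on $C_0\setminus Z$ must come from step 3, and the lemma needs it to be carried by a \emph{unipotent} element.

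Your step-3 expectation points the wrong way. You anticipate a two-parameter torus $\diag(\zeta,\bar\zeta^{-1}\lambda)$ in the stabilizer, with $\omega$ balanced against $\xi$. If the nontriviality lived on such a torus, you would get no unipotent element at all.

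What the computation actually gives is the following. For $h=m(H_0,k)u(X,Y)\in P\cap wP^tw$, the matrices $H_0,k$ are upper triangular, $X$ has vanishing first column, and $Y=re_{12}$. The conditions $s,v\in S$ become $kA=AH_0$ and $C_1B=Bl^t$. With $A$ upper and $B$ lower triangular, comparing diagonal entries forces $\alpha=\epsilon=k_{11}=k_{22}$. So the reductive part of the stabilizer is only the center, where $\omega(k)/\omega(l)=1$. The rest is the unipotent family
$k=\bigl(\begin{smallmatrix}1&u_0\\0&1\end{smallmatrix}\bigr)$, $\beta=u_0d/a$, $v_0=qr/m$, with $u_0,r\in E^-$ and $z\in E$. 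On this family $\chi=\psi(\Tr(dq(r/m-u_0/a)))$.

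The annihilator of $E^-$ under the trace pairing is $F$. This identifies $Z$ as the locus $dq/a,dq/m\in F^\times$. It also produces, off $Z$, a unipotent stabilizer element with nontrivial character. You need this structural fact, that the stabilizer is center times unipotent and $\omega$ plays no role, to get the final claim.
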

\begin{proof}
The loci $C$ and $C_0$ are determined by the unipotent stabilizers
and the restrictions of $\chi$ and $\xi$. Let $e_{ij}$ denote
the $2\times2$ matrix units, and put $u_z=u(ze_{12},0)$.
At $w$, the condition
$wu(X,Y)w\in U^t$ forces $X_{11}=X_{21}=X_{22}=0$ and $Y=0$.
The remaining parameter is $X_{12}=z$, whose quadratic term
vanishes. Thus the stabilizer is described by
$u_zw=wu_z^t$, $z\in E$. Conjugating by the Levi factors gives
the stabilizing pair at $g(A,B)$:
\[
 \left(L_Au_zL_A^{-1},(L_B^tu_zL_B^{-t})^{-1}\right).
\]
The two contributions to its unipotent character are $cz$ and
$-nz$, so its character is
\begin{equation}\label{eq:firstchar}
                     \psi\bigl(\Tr((c-n)z)\bigr).
\end{equation}

The intrinsic descriptions of $C$ and $C_0$ follow from
nondegeneracy of the trace pairing. Indeed,~\eqref{eq:firstchar}
is trivial for every $z$ exactly when $c=n$. On this locus,
$\xi$ on the first projection of the stabilizer is
$\psi(\Tr(cz))$, which is trivial for every $z$ exactly when
$c=0$. Since $U\times U$ is normal in $\HH$ and $\eta$ is a
character of $S$, both descriptions are preserved by $\HH$.

To identify $Z$, we compute the full $\HH$-stabilizer, beginning
with $P\cap wP^tw$. Its elements are precisely
\begin{equation}\label{eq:3221-intersection}
\begin{gathered}
h=m(H_0,k)u(X,Y),\qquad
H_0=\begin{pmatrix}\alpha&\beta\\0&\epsilon\end{pmatrix},
\qquad
k=\begin{pmatrix}k_{11}&k_{12}\\0&k_{22}\end{pmatrix},\\
X=\begin{pmatrix}0&z\\0&v_0\end{pmatrix},\qquad Y=re_{12},\\
\alpha,\epsilon,k_{11},k_{22}\in E^\times,\qquad
\overline{k_{11}}k_{22}=\mu=\lambda(k)\in F^\times,\qquad
k_{12}/k_{22},\,r\in E^-.
\end{gathered}
\end{equation}
Indeed, $whw\in P^t$ forces $H_0$ and $k$ to be upper
triangular, the first column of $X$ to vanish, and $Y=re_{12}$.
The unitary equation for $k$ gives the displayed conditions on
its entries. Since $XX'=0$, the remaining equation is
$r+\bar r=0$. Conversely, these conditions imply all the
required matrix entries and unitary equations.

The conditions defining $S$ translate into two equations
between diagonal blocks. Put $\gamma=\alpha z+\beta v_0$ and
$\nu=\mu/\bar\alpha$. The first two diagonal blocks of $whw$
are
\[
 C_1=\begin{pmatrix}k_{22}&0\\\epsilon v_0&\epsilon\end{pmatrix},
 \qquad
 C_2=\begin{pmatrix}\nu&0\\\alpha r&\alpha\end{pmatrix}=l^t.
\]
For $s=L_AhL_A^{-1}$ and $v=(L_B^{-1}whwL_B)^t$, the
conditions $s,v\in S$ are exactly
\begin{equation}\label{eq:3221-stabilizer-equations}
                   kA=AH_0,\qquad C_1B=Bl^t.
\end{equation}
The third diagonal blocks are then forced by the unitary
equation. These elements satisfy $sg(A,B)=g(A,B)v^t$, so
$(s,v^{-1})$ is a stabilizing pair. Conversely, every such
pair arises from this parabolic intersection and satisfies
the two equations.

For this stabilizing pair, the left unipotent trace is
$cz+dv_0$. The lower middle-left block of $whw$ is
$\left(\begin{smallmatrix}0&0\\\gamma&\beta\end{smallmatrix}\right)$,
giving right trace $(n\gamma+q\beta)/\alpha$. Including the
two Levi factors gives
\begin{equation}\label{eq:3221-full-character}
\chi(s,v^{-1})=
\frac{\omega(k)}{\omega(l)}
\psi\!\left(\Tr
\left((c-n)z+dv_0-\frac{\beta(nv_0+q)}{\alpha}\right)\right).
\end{equation}

On $C_0$, the stabilizer equations determine all the parameters
except a central scalar and three unipotent parameters.
Here $a,d,m,q\ne0$. The diagonal equations force
$\alpha=\epsilon=k_{11}=k_{22}$, and the remaining equations give
\[
 u_0=k_{12}/k_{22}\in E^-,\qquad
 \beta=k_{22}u_0d/a,\qquad v_0=qr/m.
\]
Conversely, these relations solve~\eqref{eq:3221-stabilizer-equations}.
In particular, taking $k_{22}=1$ gives the unipotent stabilizer
family
\begin{align*}
 k&=\begin{pmatrix}1&u_0\\0&1\end{pmatrix},&
 H_0&=\begin{pmatrix}1&u_0d/a\\0&1\end{pmatrix},\\
 X&=\begin{pmatrix}0&z\\0&qr/m\end{pmatrix},&
 Y&=re_{12},\qquad u_0,r\in E^-,\quad z\in E,
\end{align*}
with $h,s,v$ defined above. Its character is
\begin{equation}\label{eq:secondchar}
 \chi(s,v^{-1})=
 \psi\left(\Tr\left(dq\left(\frac r m-\frac{u_0}a\right)\right)\right).
\end{equation}

For general $k_{22}$, the matrices $h,s,v$ are scalar multiples
by $k_{22}$ of this family, and
$l=k_{22}\left(\begin{smallmatrix}1&r\\0&1\end{smallmatrix}\right)$.
The scalar factors cancel in the character of $(s,v^{-1})$,
so~\eqref{eq:secondchar} gives the full stabilizer character.
The annihilator of $E^-$ under $(x,y)\mapsto\Tr(xy)$ is $F$.
Consequently the character is trivial exactly when
$dq/a,dq/m\in F^\times$, proving the intrinsic description and
$\HH$-stability of $Z$. Off $Z$, the family with $k_{22}=1$
already supplies a unipotent stabilizer with nontrivial
character, just as~\eqref{eq:firstchar} does off $C$.

Closedness and smoothness follow from the defining equations
in the parameter space. The conditions $c-n=0$ and then $c=0$
are independent $E$-equations. On $C_0$, the conditions $dq/a,dq/m\in F$
each impose one $F$-equation; varying $a$ and $m$ independently
shows that their differentials are independent.
Since $a,d,m,q$ are nonzero, the conditions are closed in
their respective predecessors. Their inverse images are
saturated for~\eqref{eq:cover}, so local product coordinates
for this submersion give closed embedded smooth submanifolds
of $T$, of the stated dimensions.

Finally, the intrinsic descriptions also give transpose
stability. Transpose interchanges the two stabilizer
projections and preserves $\chi$. On $C$, triviality of
$\xi$ on one projection is equivalent to triviality on the
other. Thus $C$, $C_0$, and $Z$ are $\widetilde\HH$-stable.
\end{proof}

\subsubsection{Normalization of $C\setminus C_0$}

The normal form below supplies a slice for the non-Archimedean
argument and the tangent directions needed over $\R$.

\begin{lemma}\label{lem:3221-normal-form}
For $(\zeta,x,y)\in E^\times\times F^2$, put
\begin{equation}\label{eq:normalized}
 g_\zeta(x,y)=L_{A_x}wL_{B_{y,\zeta}},\qquad
 A_x=\begin{pmatrix}x/2&1\\1&0\end{pmatrix},\quad
 B_{y,\zeta}=\begin{pmatrix}y/2&1\\\zeta&0\end{pmatrix}.
\end{equation}
Every $g(A,B)$ with $c=n\ne0$ is $\HH$-equivalent to
$g_\zeta(x,y)$ with
\[
 x=\Tr(a/c),\qquad y=N(c)\Tr(m/c),\qquad
 \zeta=\det A\det B\,\bar c/c^2.
\]
Moreover, the action map
\[
 \HH\times(E^\times\times F^2)\longrightarrow C^\times,\qquad
 (h,\zeta,x,y)\longmapsto h\cdot g_\zeta(x,y)
\]
is a surjective analytic submersion, and a Nash submersion
over $\R$.
\end{lemma}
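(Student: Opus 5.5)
We normalize $g(A,B)$ with $c=n\ne0$ using elements of $\HH$ that preserve the shape $L_AwL_B$. From the proof of Lemma~\ref{lem:3221-filtration}, the pairs $(s,v^{-1})$ built from $P\cap wP^tw$ act on the parameters by $A\mapsto k A H_0^{-1}$ and $B\mapsto C_1^{-1}Bl^t$, where $k,H_0,C_1,l$ are the upper-triangular data of~\eqref{eq:3221-intersection}. The scalars $D(t)$, $t\in E^\times$, multiply $A$ and $B$ by scalars. Left multiplication by $D(k)$ with $k\in K$ absorbs into $A$ as $L_{kA}D(k)$, and the factor $D(k)$ is then pushed through $w$. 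The first step is to list these moves explicitly as transformations of $(A,B)\in\GL_2(E)^2$, including the unipotent parameters $u_0,r,z,v_0$ and the diagonal parameters $\alpha,\epsilon,k_{11},k_{22}$. We also record how each move changes $a,c,m,n$ and the determinants.

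With this list, the normalization goes as follows. Since $c\ne0$, we use the Levi diagonal entries to scale $c$ to $1$. The upper-triangular freedom, namely $\beta$ and $u_0\in E^-$, changes $d$ and $b$; we use it to bring $A$ to the form $\left(\begin{smallmatrix}*&1\\1&0\end{smallmatrix}\right)$. Only the component of $a/c$ outside $E^-$ survives this step, giving $x=\Tr(a/c)$, because $u_0$ and $r$ only kill the $E^-$-parts. The same procedure on $B$ using $C_1$ and $l$ gives $B_{y,\zeta}$. The remaining invariant entry $\zeta$ is fixed by the determinant, $\det A\det B$ up to the similitude twist, and it is the unique expression of the correct weight, hence $\zeta=\det A\det B\,\bar c/c^2$. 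We then check that $x$, $y$ and $\zeta$ are unchanged by each listed move, using $\Tr(E^-)=0$ and $\overline{k_{11}}k_{22}\in F^\times$. This confirms the stated formulas and shows they are $\HH$-invariants on $C^\times$.

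For the submersion statement, we compose with~\eqref{eq:cover}, which is already a surjective submersion $\HH\times\GL_2(E)^2\to T$. The preimage of $C^\times$ is the submanifold $\{c=n\ne0\}$. The normalization above writes each such $(A,B)$ analytically as a group element applied to $(A_x,B_{y,\zeta})$, where the group element depends on $(A,B)$ through explicit rational formulas in $a,c,m,\dots$ with $c$ inverted. This gives an analytic section, and a Nash section over $\R$, so the map factors through a surjective submersion onto the parameter set. A dimension count supports this: $C^\times$ has dimension $33$ by Lemma~\ref{lem:3221-filtration}, and the slice directions $(\zeta,x,y)$ contribute $4$ real parameters transverse to the orbits. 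This is consistent with $\HH$-orbits of codimension $4$ in $C^\times$. Surjectivity of the differential then follows by equivariance once it is checked at a point of the slice.

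The main obstacle is the bookkeeping in the second step. We have to verify that the moves really act on $(A,B)$ as claimed, without leaving the $L_AwL_B$ form or changing $c=n$, and that after fixing $c=1$ the residual freedom is exactly enough to reach $A_x$ and $B_{y,\zeta}$ at the same time. The coupling through $\alpha=\epsilon$-type constraints and the factor $N(c)$ in $y$ suggests the order of normalization matters. We would first fix $c$, then $A$, then use the remaining stabilizer of $A_x$ to normalize $B$, recomputing how that stabilizer acts on $m$ and $q$.
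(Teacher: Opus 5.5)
\textbf{Verdict: there is a genuine gap.} Your framework is the paper's: transporters from $P\cap wP^tw$ acting by $A'=kAH_0^{-1}$ and $B'=C_1BC_2^{-1}$, followed by factoring the restricted cover through the action map. However, the step you use to ``confirm the stated formulas'' is false. The quantities $x$ and $y$ are not $\HH$-invariants. Take $t\in F^\times$ and $k=\diag(t,1)$, so $k\in K$ with $\lambda(k)=t$. The pair $(D(k),D(k)^{-1})\in\HH$ has $\chi=1$. Conjugating \eqref{eq:normalizedmatrix} by $D(k)=\diag(t,1,t,1,t,1)$ multiplies the $(1,4)$ and $(5,6)$ entries by $t$ and the $(4,1)$ and $(6,5)$ entries by $t^{-1}$, and leaves the rest unchanged. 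So it carries $g_\zeta(x,y)$ to $g_\zeta(tx,t^{-1}y)$. In your parametrization, a move with $k_{11}\ne k_{22}$ multiplies $\Tr(a/c)$ by $k_{11}/k_{22}=\mu/N(k_{22})\in F^\times$. Consequently:
\begin{itemize}
\item only $\zeta$ and $xy$ are invariant;
\item away from $x=y=0$ the orbits in $C^\times$ have codimension $3$, not $4$, and $x\partial_x-y\partial_y$ is an orbit direction.
\end{itemize}
This residual $F^\times$-action is exactly what Lemma~\ref{lem:padic-plane} has to handle later, so the point is not cosmetic. The stated values of $x$ and $y$ are therefore produced only by one specific normalizing element. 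That element is the actual content of the lemma, and your proposal leaves it as ``bookkeeping''.

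The paper's element is
\begin{itemize}
\item $k=\left(\begin{smallmatrix}1&\theta\\0&1\end{smallmatrix}\right)$ with $\theta=x/2-a/c\in E^-$, so $k_{11}=k_{22}=\mu=1$;
\item $H_0=\left(\begin{smallmatrix}c&d\\0&-\det A/c\end{smallmatrix}\right)$;
\item $X=\left(\begin{smallmatrix}0&0\\0&-q/c\end{smallmatrix}\right)$ and $Y=re_{12}$, with $r=m/c-y/(2N(c))\in E^-$.
\end{itemize}
It then verifies $kA=A_xH_0$ and $C_1B=B_{y,\zeta}C_2$. This also resolves the coupling you were worried about:
\begin{itemize}
\item the $A$-equation forces $\alpha=c$;
\item the $(1,2)$ entry of the $B$-equation then reads $n=\alpha$, which is precisely the hypothesis $c=n$;
\item the $(1,1)$ entry, together with $r\in E^-$, gives $y=N(c)\Tr(m/c)$;
\item the second row gives $v_0=-q/c$ and $\zeta=\det A\det B\,\bar c/c^2$.
\end{itemize}
With this element in hand, your submersion argument goes through. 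You must also note that the element is the identity at $(A_x,B_{y,\zeta})$. That is what gives surjectivity of the differential at $(1,\zeta,x,y)$ before equivariance is applied.

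Minor slips: $B'=C_1B(l^t)^{-1}$, not $C_1^{-1}Bl^t$. Also $D(k)L_A=L_{kAk^{-1}}D(k)$, and $D(k)$ can be moved past $w$ only for $k$ in the triangular group of~\eqref{eq:3221-intersection}.
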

\begin{proof}
The matching equations from the stabilizer calculation also
give the required orbit equivalence. Set
\[
 \theta=x/2-a/c,\qquad r=m/c-y/(2N(c)),\qquad
 k=\begin{pmatrix}1&\theta\\0&1\end{pmatrix},\qquad
 H_0=\begin{pmatrix}c&d\\0&-\det A/c\end{pmatrix},
\]
where $\theta,r\in E^-$, and take
\[
 h=m(H_0,k)u\left(\begin{pmatrix}0&0\\0&-q/c\end{pmatrix},re_{12}\right).
\]
The diagonal blocks $C_1,C_2$ of $whw$ satisfy
$kA=A_xH_0$ and $C_1B=B_{y,\zeta}C_2$.
As in~\eqref{eq:3221-stabilizer-equations}, these identities show
that $s=L_{A_x}hL_A^{-1}$ and
$v=(L_{B_{y,\zeta}}^{-1}whwL_B)^t$ belong to $S$ and satisfy
$sg(A,B)=g_\zeta(x,y)v^t$.

Multiplication of the three factors in~\eqref{eq:normalized}
gives the matrix needed for the later differential calculations:
\begin{equation}\label{eq:normalizedmatrix}
 g_\zeta(x,y)=
 \begin{pmatrix}
 \zeta&0&0&x/2&0&0\\
 0&0&0&1&0&0\\
 0&0&0&0&1&0\\
 y/2&1&0&0&0&0\\
 0&0&1&0&xy/(4\bar\zeta)&-x/(2\bar\zeta)\\
 0&0&0&0&-y/(2\bar\zeta)&1/\bar\zeta
 \end{pmatrix}.
\end{equation}

Submersivity follows from the smooth dependence of these
normalizing elements on $A$ and $B$. Their formulas are smooth
on $c\ne0$, so the restriction of~\eqref{eq:cover} factors
smoothly through the displayed action map. On the normalized family itself, $s=v=1$.
Differentiating this factorization at those points proves
surjectivity of the new differential at every
$(1,\zeta,x,y)$, and $\HH$-equivariance gives it everywhere.
The formulas are Nash over $\R$. In particular, the orbit
directions and derivatives of~\eqref{eq:normalizedmatrix}
span the tangent space to $C^\times$, including where $x=0$
or $y=0$.
\end{proof}

\subsubsection{Diagonal normalization of $Z$}

On the last closed locus, the triangular matrices $A$ and $B$
can be made diagonal.

\begin{lemma}\label{lem:3221-diagonal-form}
Every $\HH$-orbit in $Z$ meets the family
\begin{equation}\label{eq:diagonalAB}
 A=\diag(a,d),\qquad B=\diag(m,q),\qquad
 dq/a,dq/m\in F^\times,\qquad a,d,m,q\in E^\times.
\end{equation}
The map $(h,a,d,m,q)\mapsto h\cdot g(\diag(a,d),\diag(m,q))$,
on $\HH$ times this parameter space, is a surjective analytic
submersion, and a Nash submersion over $\R$.
\end{lemma}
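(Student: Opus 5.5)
Starting from a point $g(A,B)$ of $Z$, I would run the same matching-equation device as in Lemma~\ref{lem:3221-normal-form}, now with the triangular data of $C_0$. On $C_0$ we have $c=n=0$, so $A=\left(\begin{smallmatrix}a&b\\0&d\end{smallmatrix}\right)$ and $B=\left(\begin{smallmatrix}m&0\\p&q\end{smallmatrix}\right)$ with $a,d,m,q\in E^\times$. The aim is an element $h\in P\cap wP^tw$, in the parametrization~\eqref{eq:3221-intersection}, such that
\[
 s=L_{A'}hL_A^{-1}\in S,\qquad v=(L_{B'}^{-1}whwL_B)^t\in S,
\]
where $A'=\diag(a,d)$ and $B'=\diag(m,q)$. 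By the argument around~\eqref{eq:3221-stabilizer-equations}, this reduces to solving $kA=A'H_0$ and $C_1B=B'l^t$. The first equation is upper triangular. Taking $k$ and $H_0$ unipotent upper triangular, with $k_{12}=u_0$, it becomes
\[
 a\beta=b+u_0d,
\]
so $\beta=(b+u_0d)/a$ with the free parameter $u_0\in E^-$. The second equation is lower triangular, with $C_1=\left(\begin{smallmatrix}1&0\\v_0&1\end{smallmatrix}\right)$ and $l^t=\left(\begin{smallmatrix}1&0\\r&1\end{smallmatrix}\right)$. Its $(2,1)$ entry gives
\[
 v_0m+p=qr,
\]
which is solved by $v_0=(qr-p)/m$, with $r\in E^-$ free and $z$ free. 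So $b$ and $p$ can always be removed. The $F$-conditions $dq/a,\,dq/m\in F^\times$ hold on $Z$, and they are $\HH$-invariant by Lemma~\ref{lem:3221-filtration}. Hence the diagonal representative lies in the family~\eqref{eq:diagonalAB}, and every orbit meets it.

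I expect the point needing care to be the exact block forms of $C_1,C_2$ and the check that the chosen $h$ lies in $P\cap wP^tw$. In particular, $Y=re_{12}$ and $k_{12}\in E^-$ must be compatible with the unitary equations, and the third diagonal blocks must match. Since these entries enter exactly as in the stabilizer computation of Lemma~\ref{lem:3221-filtration}, where the same linear equations appeared with $b=p=0$, I would reuse those formulas verbatim. If $b$ or $p$ cannot be absorbed by the $E^-$-valued parameters, the fallback is the $E$-valued entries $\beta$ and $v_0$, which are unconstrained; the displayed solutions above use precisely these.

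For submersivity I would argue as in Lemma~\ref{lem:3221-normal-form}. The normalizing elements $h=h(A,B)$ are explicit rational functions of $(A,B)$, smooth and Nash on the locus where $a,m\ne0$, and they satisfy $h=1$ on the diagonal family. So the restriction of the cover~\eqref{eq:cover} to the inverse image of $Z$ factors smoothly through the new action map $(h,a,d,m,q)\mapsto h\cdot g(\diag(a,d),\diag(m,q))$. That inverse image is a smooth submanifold of $\HH\times\GL_2(E)^2$, being saturated and cut out by independent equations as shown in Lemma~\ref{lem:3221-filtration}. The cover~\eqref{eq:cover} restricted to it is therefore a submersion onto $Z$. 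Differentiating the factorization at $h=1$ then shows that the differential of the new map is surjective at each point of the diagonal family. By $\HH$-equivariance it is surjective everywhere, and surjectivity of the map itself was shown above. The parameter space is smooth because the two $F$-conditions have independent differentials, by varying $a$ and $m$.
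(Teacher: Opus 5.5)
Your proposal is correct and follows essentially the same route as the paper: the paper's normalizing element is exactly your solution with $u_0=r=z=0$, namely $h=m\bigl(\bigl(\begin{smallmatrix}1&b/a\\0&1\end{smallmatrix}\bigr),I_2\bigr)u\bigl(-(p/m)e_{22},0\bigr)$, via the matching identities $kA=A'H_0$ and $C_1B=B'l^t$. Its submersivity argument is the same one you give: the normalizing elements are smooth, they are the identity when $b=p=0$, and the restricted cover therefore factors through the diagonal family, so the differentiation argument of Lemma~\ref{lem:3221-normal-form} applies.
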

\begin{proof}
Since $Z\subset C_0$, the matrix $A$ is upper triangular and
$B$ is lower triangular. Their off-diagonal entries can be
removed without changing $a,d,m,q$: put
$A'=\diag(a,d)$, $B'=\diag(m,q)$, and
\[
 h=m\left(\begin{pmatrix}1&b/a\\0&1\end{pmatrix},I_2\right)
        u\left(\begin{pmatrix}0&0\\0&-p/m\end{pmatrix},0\right).
\]
The matching identities are
\[
 A=A'\begin{pmatrix}1&b/a\\0&1\end{pmatrix},\qquad
 \begin{pmatrix}1&0\\-p/m&1\end{pmatrix}B=B'.
\]
They imply that $s=L_{A'}hL_A^{-1}$ and
$v=(L_{B'}^{-1}whwL_B)^t$ belong to $S$ and satisfy
$sg(A,B)=g(A',B')v^t$, proving the orbit equivalence.

These normalizing elements depend smoothly on the parameters,
preserve the conditions defining $Z$, and are the identity
when $b=p=0$.
The restricted map~\eqref{eq:cover} therefore factors through
the action map in the statement, and the differentiation
argument of Lemma~\ref{lem:3221-normal-form} applies.
The formulas are again Nash over $\R$. Thus the orbit and
parameter directions span the tangent spaces to $Z$ used
in Lemma~\ref{lem:Zmetric}.
\end{proof}
\section{Multiplicity one over a non-Archimedean local field}
\label{sec:padic}

Throughout this section $F$ is non-Archimedean. Our goal is to prove the
non-Archimedean assertion of Theorem~\ref{thm:distribution}, namely
\[
  \mathcal D'(G)^{\widetilde\HH,\widetilde\chi}=0.
\]
Proposition~\ref{prop:open} gives this vanishing on the open cell. The
remaining task is to exclude distributions supported on the boundary
strata of Lemma~\ref{lem:rank-common}. Two arguments recur. The
character obstruction in Lemma~\ref{lem:padic-orbits} says that an
orbit carries no nonzero equivariant distribution if its stabilizer
contains a unipotent element on which the character is nontrivial, or
an element on which the extended character is $-1$. The second
situation is produced by~\eqref{eq:padic-involutive-transporter}: an
element of $\HH$ carrying $x$ to $\tau(x)$ with character one gives an
element of the extended stabilizer with character $-1$.

The closed stratum $G_{2000}$ is treated uniformly for the two
Hermitian forms. For the quasi-split group we then prove vanishing on
$G_{2110}$, $G_{3221}$, and $G_{3111}$. The middle stratum contains
one piece for which the stabilizer character is trivial; the normal
form of Section~\ref{sec:common3221} reduces that piece to a
distribution problem on $F^2$. In the non-quasi-split case only
$G_{3111}$ remains after the open and closed strata. Once every
stratum has been treated, the support statement
\eqref{eq:closed-support-tools} passes the vanishing through the finite
closed filtration.

\subsection{Vanishing on the closed stratum $G_{2000}$}

\begin{proposition}\label{prop:padic-2000}
For either Hermitian form, there are no nonzero odd
$(\HH,\chi)$-equivariant distributions on $G_{2000}$. Equivalently,
\[
 \mathcal D'(G_{2000})^{\widetilde\HH,\widetilde\chi}=0.
\]
\end{proposition}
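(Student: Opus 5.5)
The stratum $G_{2000}$ is the closed double coset $Pj\tau(P)$ with $j=(1\ 5)(2\ 6)$, and by \eqref{eq:rank-cover} every $\HH$-orbit in it meets the family $L_AjL_B$ with $A,B\in\GL_2(E)$. Since $G_{2000}$ is an $l$-space with a constructive $\widetilde\HH$-action, Lemma~\ref{lem:padic-orbits} lets us argue orbit by orbit. For each point we will produce either a unipotent stabilizer element with nontrivial character, or a transporter $t$ satisfying \eqref{eq:padic-involutive-transporter}.

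First we compute the $U\times U$ part of the stabilizer at $j$. Conjugation by $j$ swaps the first and third block rows and columns, so $jU j=U^t$ up to sign conventions in the middle block. Concretely $ju(X,Y)j$ is lower block unitriangular. Thus for every $u\in U$ the element $juj^{-1}=\tau(u')$ with $u'\in U$ determined by $u$, and the pair $(u,(u')^{-1})$ fixes $j$. Its character is $\xi(u)\xi(u')^{-1}$. Applying $j$-conjugation to $u(X,0)$ gives the transposed unipotent with $X$ replaced by something like $-H^{-1}X^\dagger H$, which lands in the $(2,3)$ block. Hence $\xi(u')$ is computed from $X'$ rather than $X$, and $\Tr(\tr X')=-\Tr(\tr X)$.

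Moving to $g=L_AjL_B$, we conjugate the stabilizer by the Levi factors, exactly as in the proof of Lemma~\ref{lem:3221-filtration}. The character of the stabilizing pair at $g$ becomes
\[
 \psi\bigl(\Tr(\tr(A^{-1}\text{-twisted }X)+\tr(B\text{-twisted }X'))\bigr),
\]
a nonzero $F$-linear form in $X$ unless a specific relation between $A$ and $B^{*}$ holds; more precisely, the form is $X\mapsto\Tr\tr(M_gX)$ for some $M_g$ built from $A,B$. Off the locus $M_g=0$ we pick a unipotent $X$ with nontrivial character, and Lemma~\ref{lem:padic-orbits} kills those orbits. The main obstacle is that $M_g$ need not vanish generically. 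I expect $M_g=A^{-1}\pm(\text{twist of }B)$, so the character is trivial only on a closed locus, say $B$ determined by $A$.

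On that locus we use the remaining stabilizer, namely the $Y$-parameters and the Levi $D(K)$ parts, to normalize $A$. Using $m(a,k)=L_{ak^{-1}}D(k)$ together with the right action, we reduce to the representative $j$ itself, or to $L_zj L_{z'}$ with $z$ in $\GL_2(E)$ modulo $K$-twisted conjugation. Since $\tau(j)=j$, at $j$ we may take $t=1$ and obtain $a_x=\tau$ with $\widetilde\chi=-1$. For a general representative on the locus, we need $t\in\HH$ carrying $x$ to $\tau(x)$ with $\chi(t)=1$ and $\sigma(t)=t^{-1}$. We would use $t=(L_{c},\tau(L_c)^{-1})$-type elements or $\iota(k)$ from the proof of Proposition~\ref{prop:open}. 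In the anisotropic case the conjugacy element $k=Rv$ of \eqref{eq:anisotropic-conjugacy} serves this role. In the hyperbolic case we would use the transpose-conjugacy of $2\times2$ matrices via $J_0$, noting that $\omega(k)\omega(\tau_2(k^{-1}))=1$. If $U$-parameters remain nontrivial there, the unipotent obstruction handles them instead.

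Combining the two cases over all orbits, Lemma~\ref{lem:padic-orbits} gives $\mathcal D'(G_{2000})^{\widetilde\HH,\widetilde\chi}=0$.
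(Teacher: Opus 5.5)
Your outline has the right shape: Lemma~\ref{lem:padic-orbits} applied orbitwise, a unipotent obstruction off a compatible locus, and $\tau$-symmetry on it. But there is a genuine gap. The character computation is only guessed, and the second step is aimed at the wrong point.

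The missing observation is that conjugation by $j=(1\ 5)(2\ 6)=w_3\otimes I_2$ swaps the outer diagonal blocks. Since $(B^*)^*=B$, this gives $jL_Bj^{-1}=\diag(B^*,I_2,B)=L_{B^*}$, whence $L_AjL_B=L_{AB^*}j$. So a single parameter suffices: every orbit meets $x_z=L_zj$, whose block rows are $(0,0,z)$, $(0,I_2,0)$, $(z^*,0,0)$.

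For $X\in M_2(E)$, take $u=u(X,\tfrac12XX')\in U$; your $u(X,0)$ is not in $U$ in general. Put $v=\tau(x_z^{-1}ux_z)$, which lies in $U$ because $x_z^{-1}ux_z$ is lower block unitriangular. Then $(u,v^{-1})$ fixes $x_z$. Since $(x_z^{-1}ux_z)_{21}=X'z^*$ and $\Tr\tr(X'z^*)=-\Tr\tr(z^{-1}X)$,
\[
 \chi(u,v^{-1})=\psi\bigl(\Tr\tr((I_2+z^{-1})X)\bigr).
\]
So the form you expected is $X\mapsto\Tr\tr\bigl((I_2+(AB^*)^{-1})X\bigr)$, and it vanishes identically exactly when $AB^*=-I_2$.

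This is where your plan goes wrong. At $z=I_2$, that is at $j$ itself, the character is $\psi(2\Tr\tr X)$; this already follows from your own identity $\Tr\tr X'=-\Tr\tr X$. So $j$ is eliminated by the unipotent obstruction. Triviality of $\chi$ on the $U\times U$-stabilizer is an $\HH$-invariant condition, so no point of the compatible locus can be moved to $j$, and the step using $\tau(j)=j$ does not apply there.

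What closes the argument is that on the compatible locus the matrix $L_AjL_B$ is literally $x_{-I_2}$, with block rows $(0,0,-I_2)$, $(0,I_2,0)$, $(-I_2,0,0)$. This matrix is symmetric and is fixed by both the ordinary and the weighted transpose. Hence $\tau$ lies in its extended stabilizer with $\widetilde\chi(\tau)=-1$, and Lemma~\ref{lem:padic-orbits} applies with $t=1$.

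Consequently the compatible locus is the single $\HH$-orbit of $x_{-I_2}$. The transporters you list ($\iota(k)$, $k=Rv$, conjugation by $J_0$, $K$-twisted conjugacy classes of $z$) are not needed, and as written none of them is checked. Without the reduction to $x_z$ and the identification of $x_{-I_2}$, your proposal does not prove the proposition.
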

\begin{proof}
Lemma~\ref{lem:padic-orbits} will be applied orbit by orbit. Recall
that it gives vanishing when the stabilizer contains a unipotent
element with nontrivial character, or an element on which the extended
character is $-1$.

Let
\[
 J_3=w_3\otimes I_2.
\]
Every $\HH$-orbit in $G_{2000}$ has a representative
\begin{equation}\label{eq:ns-padic-2000-representative}
 x_z=L_zJ_3=
 \begin{pmatrix}0&0&z\\0&I_2&0\\z^*&0&0\end{pmatrix},
                         \qquad z\in\GL_2(E).
\end{equation}
For $X\in M_2(E)$ put $X'=-H^{-1}X^\dagger H$ and choose
$Y=\frac12XX'$.  Then $u=u(X,Y)\in U$.  The element
\[
                   v=\tau(x_z^{-1}ux_z)
\]
also belongs to $U$, and $(u,v^{-1})$ fixes $x_z$.  Its character is
\begin{equation}\label{eq:ns-padic-2000-character}
 \chi(u,v^{-1})
 =\psi\!\left(\operatorname{Tr}_{E/F}\tr\bigl((I_2+z^{-1})X\bigr)\right).
\end{equation}
For completeness, block multiplication gives
$(x_z^{-1}ux_z)_{21}=X'z^*$ and
$(x_z^{-1}ux_z)_{32}=z^{-1}X$.
The $(1,2)$ block of $v$ is
$X_v=\tau_2(X'z^*)$. Consequently
\[
 \operatorname{Tr}_{E/F}\tr(X_v)
 =-\operatorname{Tr}_{E/F}\tr(z^{-1}X).
\]
For weighted transpose use $z^*=H^{-1}(z^\dagger)^{-1}H$;
for ordinary transpose use $H=J=J^{-1}$. In both cases
the field trace is unchanged by conjugation and transpose.  This proves
\eqref{eq:ns-padic-2000-character}.

If $z\neq-I_2$, the nondegeneracy of
$(A,X)\mapsto\operatorname{Tr}_{E/F}\tr(AX)$ supplies an $X$ for which
\eqref{eq:ns-padic-2000-character} is different from $1$. Hence the
stabilizer contains a unipotent element on which $\chi$ is nontrivial.
The first assertion of Lemma~\ref{lem:padic-orbits} therefore gives
vanishing on this orbit. If $z=-I_2$, then
\[
                  x_{-I_2}=\tau(x_{-I_2}).
\]
Thus $\tau$ fixes $x_{-I_2}$ and
$\widetilde\chi(\tau)=-1$. This is the second stabilizer condition in
Lemma~\ref{lem:padic-orbits}, so the orbit again admits no nonzero odd
equivariant distribution. Since every orbit in $G_{2000}$ is covered
by one of these two cases, the final assertion of that lemma gives the
proposition.
\end{proof}

\subsection{The quasi-split group}\label{sec:qs-padic}

Here $H=J$, $W=w_6$, and $\tau$ is ordinary transpose. By
Propositions~\ref{prop:open} and~\ref{prop:padic-2000}, it remains to
prove vanishing on $G_{2110}$, $G_{3221}$, and $G_{3111}$. We state
each vanishing result before its calculation, so the final support
argument can refer to them directly.

\subsubsection{Vanishing on $G_{2110}$}

\begin{proposition}\label{prop:padic-2110}
Assume that $H$ is quasi-split. Then there are no nonzero odd
$(\HH,\chi)$-equivariant distributions on $G_{2110}$. Equivalently,
\[
 \mathcal D'(G_{2110})^{\widetilde\HH,\widetilde\chi}=0.
\]
\end{proposition}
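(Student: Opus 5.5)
The plan is to follow the template of Proposition~\ref{prop:padic-2000}: apply Lemma~\ref{lem:padic-orbits} orbit by orbit, using the parametrization~\eqref{eq:rank-cover} with the representative $w=(1\ 3)(2\ 5)(4\ 6)$. Every $\HH$-orbit in $G_{2110}$ then meets the family $g(A,B)=L_AwL_B$ with $A,B\in\GL_2(E)$. For $s=0$, $t=1$, the plane $V=gB_0$ meets $A_0$ in a line and lies in $A_0^\perp$. We therefore expect a large stabilizer: unipotent elements coming from $U\cap wU^tw$ that are determined by the line $A_0\cap V$ and by the isotropic plane $A_0+V$. These should give one-parameter families $u(X,Y)$ with $wu(X,Y)w\in U^t$.

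The first step is to determine the unipotent stabilizer $U\cap wU^tw$, exactly as in the proof of Lemma~\ref{lem:3221-filtration}. We compute which entries of $X$ and $Y$ survive the condition $wu(X,Y)w\in U^t$. Conjugating by $L_A$ and $L_B^t$ then gives stabilizing pairs
\[
 \bigl(L_AuL_A^{-1},(L_B^tu'L_B^{-t})^{-1}\bigr),
\]
and we compute their character as $\psi(\Tr(\ell_A(X)-\ell_B(X)))$, where $\ell_A$ and $\ell_B$ are linear in the surviving parameters with coefficients given by entries of $A$ and $A^{-1}$, respectively $B$ and $B^{-1}$. We expect the surviving parameters to include at least one free $E$-coordinate in $X$ and one $E^-$-coordinate in $Y$. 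Nondegeneracy of the trace pairing then shows that the character is nontrivial off an explicit closed locus $C'$, defined by vanishing or $F$-rationality conditions on entries of $A$ and $B$. Off $C'$, the first clause of Lemma~\ref{lem:padic-orbits} gives vanishing.

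The second step treats $C'$. Here we normalize $A$ and $B$ using the Levi part of $P\cap wP^tw$, as in Lemmas~\ref{lem:3221-normal-form} and~\ref{lem:3221-diagonal-form}. The aim is to bring each orbit in $C'$ to a representative $x$ that is either $\tau$-fixed, for instance a permutation matrix times $L_{\diag(a,d)}$ with symmetric blocks, or carried to $\tau(x)$ by an element $t$ satisfying~\eqref{eq:padic-involutive-transporter}. Since $w$ is an involutive symmetric permutation matrix, $\tau(L_AwL_B)=L_{B^t}^{t}wL_A^t$. We therefore look for $t=(s,\sigma$-dual $s^{-1})$ built from $L$-type and $D(K)$-type elements swapping the roles of $A$ and $B$, with $\chi(t)=1$ because the $\omega$-factors cancel as in $\iota(K)$. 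Then $a_x=t^{-1}\tau$ has $\widetilde\chi(a_x)=-1$, and the second clause of Lemma~\ref{lem:padic-orbits} excludes these orbits as well. The lemma's final assertion, applied to the constructive action on the locally closed set $G_{2110}$, then gives the proposition.

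The main obstacle is the second step. Proposition~\ref{prop:padic-2000} had a single exceptional orbit $z=-I_2$. Here the residual locus may carry continuous parameters, as the $(x,y)$-family did on $C\setminus C_0$ in $G_{3221}$, and an orbitwise transporter may fail there. If some residual orbits admit no such $t$, I would first try to enlarge the unipotent family using the $Y$-block (the $E^-$-parameter $r$ in the analogue of~\eqref{eq:secondchar}) to shrink $C'$ further. Failing that, I would pass to a slice and use Theorem~\ref{thm:localization} as planned for $G_{3221}$. I expect, however, that because $s=0$ the Levi stabilizer is large enough that $C'$ consists of $\tau$-symmetric representatives.
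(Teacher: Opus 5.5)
Your outline matches the paper's proof. You compute the $U\times U$-stabilizer at $w=(1\ 3)(2\ 5)(4\ 6)$, discard the points where its character is nontrivial by the first clause of Lemma~\ref{lem:padic-orbits}, and treat the residual locus by a character-one transporter as in~\eqref{eq:padic-involutive-transporter}. As written, however, this is a plan rather than a proof, and the gap is genuine. Both computations that carry the content are left as expectations. For the second you explicitly leave open whether an orbitwise transporter exists or whether a slice and localization argument, as on $C\setminus C_0\subset G_{3221}$, is needed. Deciding that question is the substance of the proposition. Once it is decided, your last step is correct and slightly more economical than the paper. Every point of $G_{2110}$ satisfies one of the two clauses of Lemma~\ref{lem:padic-orbits}, so its global assertion applies directly, and you need not prove that the residual locus is closed.

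Here is what fills the gap. The solutions of $u_0w=wv_0^t$ are $v_0=u\bigl(\bigl(\begin{smallmatrix}t&0\\ s&0\end{smallmatrix}\bigr),\bigl(\begin{smallmatrix}r&0\\ \epsilon&-\bar r\end{smallmatrix}\bigr)\bigr)$ and $u_0=u\bigl(\bigl(\begin{smallmatrix}t&0\\ r&0\end{smallmatrix}\bigr),\bigl(\begin{smallmatrix}s&0\\ \epsilon&-\bar s\end{smallmatrix}\bigr)\bigr)$, with $t,r,s\in E$ and $\epsilon\in E^-$. Conjugation by $L_A$ and $L_{B^t}$ acts on the first adjacent block by $X\mapsto AX$ and $X\mapsto B^tX$, so only entries of $A$ and $B$ (not their inverses) enter, and the character is $\psi(\Tr((a-m)t+br-ps))$. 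Since $t,r,s$ are free in $E$, the residual locus is $C'=\{a=m,\ b=p=0\}$. No $F$-rationality condition arises, and the only $E^-$-parameter $\epsilon$ never enters $\xi$.

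On $C'$, apply the pair $(Q_L,Q_R)$ with $Q_L=u(d\bar n\bar q^{-1}e_{22},0)$ and $Q_R=u(q\bar c\bar d^{-1}e_{22},0)$. It removes $c$ and $n$ and gives the matrix $y(a,z)$ of~\eqref{eq:padic-2110normal} with $z=d/\bar q$. Contrary to your closing expectation, $y(a,z)$ is $\tau$-fixed only when $N(z)=1$. The missing ingredient is the torus element $k_z=\diag(1,N(z)^{-1})\in K$, which has nontrivial similitude. The pair $t_z=(D(k_z),D(k_z)^{-1})$ satisfies $t_z\cdot y(a,z)=y(a,z)^t$, $\chi(t_z)=1$ and $\sigma(t_z)=t_z^{-1}$. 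For general $x\in C'$, use $\sigma(h)^{-1}t_zh$. The normalizing element $h$ need not have character one, because $\chi\circ\sigma=\chi$.

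A small slip: $\tau(L_AwL_B)=L_{B^t}wL_{A^t}$, since $L_B^t=L_{B^t}$.
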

\begin{proof}
The proof first identifies the points for which the unipotent
stabilizer character is nontrivial. The remaining closed subset is
handled by transpose symmetry.

Let $w=(1\ 3)(2\ 5)(4\ 6)$ be the representative of this stratum. By \eqref{eq:rank-cover}, every point of $G_{2110}$ lies in the $\HH$-orbit of
\[
x(A,B)=L_AwL_B,\qquad A=\begin{pmatrix}a&b\\c&d\end{pmatrix},\quad B=\begin{pmatrix}m&n\\p&q\end{pmatrix}\in\GL_2(E).
\]
Put $N_U:=U\times U$. The $N_U$-stabilizer determines where the character obstruction applies.

Since $L_A$ and $L_B^t$ normalize $U$, it is enough to solve
\[
u_0w=wv_0^t,\qquad u_0,v_0\in U.
\]
Write $v_0=u(X,Y)$ with $X=(x_{ij})$ and $Y=(y_{ij})$. As $w=w^t=w^{-1}$, the condition is equivalent to $wv_0^tw\in U$. Comparing the entries which must vanish in an upper block-unipotent matrix gives $x_{12}=x_{22}=y_{12}=0$. Thus
\[
X=\begin{pmatrix}t&0\\s&0\end{pmatrix},\qquad Y=\begin{pmatrix}r&0\\\epsilon&y\end{pmatrix}.
\]
For this $X$, the associated block $X'=-JX^\dagger J$ satisfies $(X')^\dagger JX'=0$. The remaining unitary equation therefore reduces to $JY+Y^\dagger J=0$, which gives $y=-\bar r$ and $\epsilon+\bar\epsilon=0$. Consequently all solutions are precisely
\[
\begin{aligned}
v_0&=u\left(\begin{pmatrix}t&0\\s&0\end{pmatrix},\begin{pmatrix}r&0\\\epsilon&-\bar r\end{pmatrix}\right),\\
u_0&=u\left(\begin{pmatrix}t&0\\r&0\end{pmatrix},\begin{pmatrix}s&0\\\epsilon&-\bar s\end{pmatrix}\right),
\end{aligned}
\]
where $t,r,s\in E$ and $\epsilon\in E^-$. Conversely, direct substitution verifies $u_0w=wv_0^t$ for every such choice of parameters. This proves that the list is exhaustive. In particular, the unipotent stabilizer at $w$ is the vector group $(E,+)^3\times(E^-,+)$, of dimension seven over $F$.

For $x=x(A,B)$, every solution of $ux=xv^t$ is obtained uniquely by setting
\[
u=L_Au_0L_A^{-1},\qquad v=L_B^tv_0L_B^{-t}.
\]
The corresponding stabilizing element of $N_U$ is $(u,v^{-1})$. The first adjacent blocks of $u$ and $v$ are respectively
\[
A\begin{pmatrix}t&0\\r&0\end{pmatrix},\qquad B^t\begin{pmatrix}t&0\\s&0\end{pmatrix}.
\]
Their traces are $at+br$ and $mt+ps$. Hence
\begin{equation}\label{eq:padic-2110char}
\chi(u,v^{-1})=\psi\bigl(\Tr((a-m)t+br-ps)\bigr).
\end{equation}
The three parameters $t,r,s$ vary independently over $E$. By nondegeneracy of the trace pairing, the character is trivial on the entire unipotent stabilizer if and only if $a=m$ and $b=p=0$.

Define intrinsically
\[
C_N=\{x\in G_{2110}:\chi|_{(N_U)_x}=1\}.
\]
Its inverse image under the covering \eqref{eq:rank-cover} is given by $a=m$ and $b=p=0$.

The subset $C_N$ is closed in $G_{2110}$. Let $\mathfrak n=\mathfrak u\oplus\mathfrak u$. The infinitesimal stabilizer at $x$ is
\[
\mathfrak n_x=\{(Z_1,Z_2)\in\mathfrak n:Z_1x+xZ_2^t=0\}.
\]
The preceding calculation and Levi transport show that these spaces have constant dimension seven. They therefore form an algebraic vector bundle over $G_{2110}$.

Consider the $F$-linear additive functional
\[
\ell(Z_1,Z_2)=\Tr\left(\tr (Z_1)_{12}+\tr (Z_2)_{12}\right),
\]
where the subscripts denote the upper adjacent $2\times2$ blocks. Since the stabilizers are smooth vector groups, triviality of their character is equivalent to $\ell|_{\mathfrak n_x}=0$. Indeed, a nonzero $F$-linear functional has image $F$, on which $\psi$ is not identically one. Thus $C_N$ is the zero locus of an algebraic section of the dual stabilizer bundle, and hence is closed. It is stable under $\HH$ by normality of $N_U$ and invariance of its character. It is also stable under $\tau$, which exchanges the two factors.

On $G_{2110}\setminus C_N$, the stabilizer contains a unipotent
element on which $\chi$ is nontrivial. Lemma~\ref{lem:padic-orbits}
states that such an orbit carries no nonzero equivariant distribution;
its orbitwise-to-global assertion therefore gives
\[
\mathcal D'(G_{2110}\setminus C_N)^{\widetilde\HH,\widetilde\chi}=0.
\]

On $C_N$ the stabilizer character no longer gives vanishing, so we realize transpose by an element of character one. The defining parameter condition gives
\[
A=\begin{pmatrix}a&0\\c&d\end{pmatrix},\qquad B=\begin{pmatrix}a&n\\0&q\end{pmatrix},\qquad a,d,q\ne0.
\]
For $r,s\in E$ put
\[
Q(r,s)=u\left(\begin{pmatrix}0&r\\0&s\end{pmatrix},0\right).
\]
These matrices belong to $U$: their associated blocks satisfy $(X')^\dagger JX'=0$, so the unitary equation holds with $Y=0$.

Set
\[
Q_L=Q(0,d\bar n/\bar q),\qquad Q_R=Q(0,q\bar c/\bar d),\qquad z=d/\bar q.
\]
Direct block multiplication yields
\begin{equation}\label{eq:padic-2110normal}
Q_Lx(A,B)Q_R^t=y(a,z):=
\begin{pmatrix}
0&0&a&0&0&0\\
0&0&0&0&z&0\\
a&0&0&0&0&0\\
0&0&0&0&0&\bar a^{-1}\\
0&\bar z^{-1}&0&0&0&0\\
0&0&0&\bar a^{-1}&0&0
\end{pmatrix}.
\end{equation}
Here $a,z\in E^\times$. Thus every point of $C_N$ is $\HH$-conjugate, under the prescribed left-right action, to a matrix of this form. 

Put $k_z=\diag(1,N(z)^{-1})$. Since $N(z)\in F^\times$, we have $k_z^\dagger Jk_z=N(z)^{-1}J$, so $k_z\in K$. Define
\[
t_z=(D(k_z),D(k_z)^{-1})\in\HH.
\]
Using $z/N(z)=\bar z^{-1}$ and $N(z)/\bar z=z$, we obtain directly from \eqref{eq:padic-2110normal} that
\[
t_z\cdot y(a,z)=y(a,z)^t.
\]
Moreover,
\[
\chi(t_z)=\omega(k_z)\omega(k_z^{-1})=1,\qquad \sigma(t_z)=t_z^{-1}.
\]

The same property holds at every point of $C_N$. Let $x\in C_N$ and choose $h\in\HH$ such that $h\cdot x=y(a,z)$. Define $t_x=\sigma(h)^{-1}t_zh$. Since $\tau(h\cdot x)=\sigma(h)\cdot\tau(x)$ and $\chi\circ\sigma=\chi$, we obtain
\[
t_x\cdot x=\tau(x),\qquad \chi(t_x)=1,\qquad \sigma(t_x)=t_x^{-1}.
\]
Thus $t_x^{-1}\tau$ lies in the extended stabilizer of $x$ and has
$\widetilde\chi(t_x^{-1}\tau)=-1$. This is precisely the second
stabilizer condition in Lemma~\ref{lem:padic-orbits}; hence
\[
\mathcal D'(C_N)^{\widetilde\HH,\widetilde\chi}=0.
\]

Equation~\eqref{eq:closed-support-tools} identifies a distribution on
$G_{2110}$ supported on the closed subset $C_N$ with a distribution on
$C_N$. Since the odd distribution spaces vanish both on
$G_{2110}\setminus C_N$ and on $C_N$, the proposition follows.
\end{proof}

\subsubsection{Vanishing on $G_{3221}$}\label{subsec:plane-padic}

\begin{lemma}\label{lem:padic-plane}
Every distribution on $F^2$ invariant under
$(x,y)\mapsto(tx,t^{-1}y)$, $t\in F^\times$, is invariant under
$(x,y)\mapsto(y,x)$.
\end{lemma}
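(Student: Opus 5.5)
We reformulate the claim as a vanishing statement and then apply Bernstein localization to the product map. Let $L=F^\times\rtimes\langle\sigma_0\rangle$, where $F^\times$ acts by $t\cdot(x,y)=(tx,t^{-1}y)$ and $\sigma_0(x,y)=(y,x)$. Since $\sigma_0 t\sigma_0^{-1}=t^{-1}$, this is a semidirect product. Define $\theta$ on $L$ by $\theta|_{F^\times}=1$ and $\theta(\sigma_0)=-1$. If $T$ is $F^\times$-invariant, then $T-\sigma_{0*}T$ is $F^\times$-invariant and negated by $\sigma_0$. So it suffices to prove $\mathcal D'_\theta(F^2)=0$. These actions are linear with determinants $1$ and $-1$, so they preserve the additive Haar measure up to absolute value one. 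Hence the convention for densities causes no extra character, and we may work with distributions on test functions.

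Consider the map $p(x,y)=xy$ from $F^2$ to $F$. It is invariant under $L$. By Theorem~\ref{thm:localization}, it is enough to prove $\mathcal D'_\theta(p^{-1}(c))=0$ for every $c\in F$.

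For $c\ne0$, the fiber $\{xy=c\}$ is a single $F^\times$-orbit $\{(t,c/t)\}$, and $F^\times$ acts on it simply transitively. The point $(1,c)$ is sent by $\sigma_0$ to $(c,1)=c\cdot(1,c)$. Hence $a=c^{-1}\sigma_0$ fixes $(1,c)$, and $\theta(a)=-1$. Lemma~\ref{lem:padic-orbits} then gives vanishing on this fiber. The lemma applies since $L$ is the $F$-points of the algebraic group $\mathbb G_m\rtimes\mathbb Z/2$ acting algebraically.

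The zero fiber is the main obstacle, since it is not a single orbit. It is the union of the two axes, and $\sigma_0$ exchanges the orbit $\{(x,0):x\ne0\}$ with $\{(0,y):y\ne0\}$, so no orbitwise argument works. We use the closed filtration $\{(0,0)\}\subset p^{-1}(0)$ together with \eqref{eq:closed-support-tools}.

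At the origin, $\sigma_0$ fixes the point and $\theta(\sigma_0)=-1$, so the origin carries no odd distribution.

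The open part $p^{-1}(0)\setminus\{0\}$ is the disjoint union $X_1\sqcup X_2$ of the two punctured axes. Each $X_i$ is open and closed in it, and each is $F^\times$-stable. A $\theta$-equivariant distribution $T$ there is determined by its restriction $T_1$ to $X_1\simeq F^\times$, because $T|_{X_2}=-\sigma_{0*}T_1$. The distribution $T_1$ can be any $F^\times$-invariant distribution on $F^\times$, namely a multiple of Haar measure.

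So the odd space on $p^{-1}(0)\setminus\{0\}$ is one-dimensional, spanned by $T_0$, which is $d^\times x$ on the $x$-axis and $-d^\times y$ on the $y$-axis. We must show that $T_0$ does not extend to an $L$-equivariant distribution on $p^{-1}(0)$. Here the left exactness in \eqref{eq:closed-support-tools} only shows that the restriction map is injective, so this has to be checked directly.

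Suppose $T$ is a $\theta$-equivariant distribution on $p^{-1}(0)$ restricting to $\lambda T_0$. Take $\varphi$ the characteristic function of $\mathcal O^2\cap p^{-1}(0)$, and let $\varpi$ be a uniformizer. The function $\varphi-\varpi\cdot\varphi$ is the indicator of $\{(x,0):x\in\mathcal O^\times\}\cup\{(0,y):y\in\varpi^{-1}\mathcal O\setminus\mathcal O\}$, which is supported away from the origin. Invariance gives $T(\varphi)=T(\varpi\cdot\varphi)$, so
\[
0=\lambda T_0(\varphi-\varpi\cdot\varphi)=\lambda\bigl(\mathrm{vol}(\mathcal O^\times)-(-\mathrm{vol}(\varpi^{-1}\mathcal O^\times))\bigr)=2\lambda\,\mathrm{vol}(\mathcal O^\times),
\]
and hence $\lambda=0$. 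It is worth double-checking the sign in this step; if it came out with the opposite sign, one would test instead against indicators of $\mathcal O\times\{0\}$ and $\{0\}\times\mathcal O$ separately.

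Once $\lambda=0$, $T$ is supported at the origin, and the origin step gives $T=0$. This proves $\mathcal D'_\theta(p^{-1}(0))=0$, and localization then completes the proof.
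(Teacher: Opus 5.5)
Your proof is correct and follows the paper's route: Bernstein localization along $p(x,y)=xy$; the single-orbit nonzero fibers, where you use the stabilizer element $c^{-1}\sigma_0$ with Lemma~\ref{lem:padic-orbits} while the paper notes that $r\mapsto c/r$ preserves $d^\times r$; and, on $\{xy=0\}$, the same $\varpi$-translate test against $1_{\mathcal O^2\cap p^{-1}(0)}$, which kills the odd axis measures and leaves an even multiple of $\delta_0$. Your sign is right: $\varphi-\varpi\cdot\varphi$ is not an indicator but equals $+1$ on the first punctured piece and $-1$ on the second, which is exactly what your displayed computation uses, so it gives $2\lambda\,\mathrm{vol}(\mathcal O^\times)$.
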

\begin{proof}
Let $T$ be invariant under the $F^\times$-action and odd under
$(x,y)\mapsto(y,x)$. Bernstein localization
(Theorem~\ref{thm:localization}) says that if the relevant equivariant
distribution space vanishes on every fiber of an invariant map, then
it vanishes on the whole space. Apply this to $p(x,y)=xy$. For
$c\ne0$, the fiber $p^{-1}(c)$ is the single $F^\times$-orbit
$(r,c/r)$, $r\in F^\times$. Its invariant distributions form the line
spanned by multiplicative Haar measure, and the involution
$r\mapsto c/r$ preserves that measure. Hence the odd part vanishes on
every nonzero fiber, and $T$ is supported on $Y=\{xy=0\}$.

The remaining assertion on $Y$ is
\cite[Lemma~4.6]{Prasad}: an $F^\times$-invariant distribution on the
union of the two coordinate axes is invariant under exchange of the
coordinates. For completeness, we recall the argument. On the two
punctured axes the restrictions of an odd invariant distribution have
the form $a\,d^\times x$ and $b\,d^\times y$, with $b=-a$. Let
$\mathfrak o$ be the ring of integers, $\varpi$ a uniformizer, and
$f=1_{Y\cap\mathfrak o^2}$. Invariance under
$(x,y)\mapsto(\varpi x,\varpi^{-1}y)$ gives $0=a-b$ after normalizing
the mass of $\mathfrak o^\times$ to one. Thus $a=b=0$. What remains is
supported at the origin, hence is a multiple of $\delta_0$, which is
even. Therefore $T=0$.
\end{proof}

\begin{proposition}\label{prop:padic-3221}
Assume that $H$ is quasi-split. Then there are no nonzero odd
$(\HH,\chi)$-equivariant distributions on $G_{3221}$. Equivalently,
\[
 \mathcal D'(G_{3221})^{\widetilde\HH,\widetilde\chi}=0.
\]
\end{proposition}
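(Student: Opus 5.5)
We will use the closed $\widetilde\HH$-stable filtration $T\supset C\supset C_0\supset Z\supset\varnothing$ of Lemma~\ref{lem:3221-filtration}, where $T=G_{3221}$. By \eqref{eq:closed-support-tools} and left exactness of taking equivariants, it suffices to prove that the odd equivariant distribution spaces vanish on each of the four successive differences $T\setminus C$, $C\setminus C_0$, $C_0\setminus Z$, and $Z$.

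The two outer-type pieces are immediate. On $T\setminus C$, the unipotent stabilizer pair built from $u_z$ has character \eqref{eq:firstchar}, which is nontrivial for a suitable $z$ since $c\ne n$. On $C_0\setminus Z$, the unipotent family with $k_{22}=1$ has character \eqref{eq:secondchar}, which is nontrivial because $dq/a\notin F$ or $dq/m\notin F$. Applying Lemma~\ref{lem:padic-orbits} pointwise therefore gives vanishing on both locally closed sets.

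On $Z$ we use transpose symmetry. By Lemma~\ref{lem:3221-diagonal-form}, each orbit meets $g(\diag(a,d),\diag(m,q))$ with $dq/a,\,dq/m\in F^\times$. We first compute $\tau$ of this point; as $w$ is symmetric, it should be $L_B^t w L_A^t$ up to the transpose of the Levi factors. We then look for a diagonal element $t=(D(k),D(k)^{-1})$, or a product of such an element with Levi elements $L$, satisfying \eqref{eq:padic-involutive-transporter}. The natural candidate is $k=\diag(\alpha,\beta)$, where $\alpha$ and $\beta$ are chosen using the $F$-rationality conditions $dq/a,dq/m\in F^\times$ so that $\bar\alpha\beta\in F^\times$. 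This mirrors the use of $k_z$ on $C_N$ in Proposition~\ref{prop:padic-2110}. Once $t\cdot x=\tau(x)$ and $\chi(t)=1$ are checked at the representatives, Lemma~\ref{lem:padic-orbits}, combined with the transport argument following \eqref{eq:padic-involutive-transporter}, gives vanishing on $Z$.

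The main step is $C^\times=C\setminus C_0$, where the stabilizer character is trivial and no single transporter exists. Here we use the submersion of Lemma~\ref{lem:3221-normal-form} onto the slice $E^\times\times F^2$ with parameters $(\zeta,x,y)$, together with the slice restriction of \cite[Lemma~2.13]{JLX}. This requires identifying the subgroup of $\widetilde\HH$ preserving the slice. We must show that it contains elements acting by $(x,y)\mapsto(tx,t^{-1}y)$ with $\zeta$ fixed and character one, and an element combining $\tau$ with an $\HH$-element of character one that acts by $(x,y)\mapsto(y,x)$ with $\zeta$ fixed, perhaps after a renormalization. We must also check that the induced density carries no extra character, including at $x=0$ or $y=0$. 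The scaling should come from $D(\diag(t,1))$-type elements paired with $L$-factors rescaling $A_x$ and $B_{y,\zeta}$. The swap comes from transposing \eqref{eq:normalizedmatrix} and renormalizing by the explicit $h$ of Lemma~\ref{lem:3221-normal-form}; one must verify that the invariants $x=\Tr(a/c)$ and $y=N(c)\Tr(m/c)$ are interchanged and that $\zeta$ is preserved.

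Granting these identities, the restricted generalized function on the slice is a family over $\zeta\in E^\times$ of scaling-invariant distributions on $F^2$ that are odd under the swap. Separating $\zeta$ by the product-test-function remark in \S\ref{subsec:padic-tools}, Lemma~\ref{lem:padic-plane} forces it to vanish. Injectivity of slice restriction then gives vanishing on $C^\times$. The hardest part will be the explicit verification of the slice stabilizer, namely the swap element, its character, and the density.
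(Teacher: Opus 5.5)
Your proposal is correct and follows the paper's proof essentially step by step. It uses the same filtration, the character obstructions from \eqref{eq:firstchar} and \eqref{eq:secondchar}, a character-one transporter on $Z$, and slice restriction to $E^\times\times F^2$ followed by Lemma~\ref{lem:padic-plane} on $C\setminus C_0$. The checks you deferred do go through: on the diagonal representatives of $Z$, $k=\diag(m/a,1)$ with $m/a\in F^\times$ gives $t=(D(k),D(k)^{-1})$ with $t\cdot g=g^t$; the scaling on the slice is simply conjugation by $D(\diag(t,1))$; and the swap needs no renormalization, since $g_\zeta(x,y)^t=g_\zeta(y,x)$ holds exactly.
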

\begin{proof}
Take $w=(1\ 4)(3\ 6)$ and $g(A,B)=L_AwL_B$, where
$A=\left(\begin{smallmatrix}a&b\\c&d\end{smallmatrix}\right)$ and
$B=\left(\begin{smallmatrix}m&n\\p&q\end{smallmatrix}\right)$. Section~\ref{sec:common3221} gives the closed stable filtration
\[
 G_{3221}\supset C\supset C_0\supset Z,
\]
whose inverse images in the covering are
\[
 C:\ c=n,\qquad C_0:\ c=n=0,\qquad
 Z:\ c=n=0,\quad dq/a,dq/m\in F^\times.
\]

The pieces $G_{3221}\setminus C$ and $C_0\setminus Z$ are removed by the character obstruction.
Equation~\eqref{eq:firstchar} gives a unipotent stabilizer element with
nontrivial character at every point of $G_{3221}\setminus C$.
Equation~\eqref{eq:secondchar} gives the same conclusion on
$C_0\setminus Z$ after choosing the central scalar parameter to be one;
the annihilator of $E^-$ under the trace pairing is $F$.
Lemma~\ref{lem:padic-orbits} states that this nontrivial unipotent
stabilizer character forces vanishing, so both pieces contribute
nothing. The two remaining pieces are $C\setminus C_0$ and $Z$.

The subset $C\setminus C_0$ is the only part that requires a separate
distribution problem. Its normal form reduces it to the preceding
lemma.

For $c=n\ne0$, the normal form of Section~\ref{sec:common3221} gives
\[
 x=\Tr(a/c),\quad y=N(c)\Tr(m/c),\quad
 \zeta=\det(A)\det(B)\bar c/c^2,
\]
and the matrix $g_\zeta(x,y)$ in \eqref{eq:normalizedmatrix}, where
$(\zeta,x,y)\in E^\times\times F^2$. There is no nonvanishing
condition on $x$ or $y$. With the normalization convention
$sg(A,B)=g_\zeta(x,y)v^t$, its character is
\[
 \chi(s,v^{-1})=
 \chi_E(\bar c/c)\psi\bigl(\Tr(-dq/c)\bigr).
\]
Every pair $(s,v)\in\HH$ carrying one normalized point to
another has the unique form
\begin{align*}
 T(t,\lambda,z)&=h(t,\lambda)j_{x,y}(z),\\
 h(t,\lambda)&=
 \bigl(D(\diag(t\lambda,\lambda)),
       D(\diag((t\lambda)^{-1},\lambda^{-1}))\bigr),\\
 j_{x,y}(z)&=(U_x(z),U_y(-z)),\qquad
 U_x(z)=u\left(\begin{pmatrix}0&xz/2\\0&z\end{pmatrix},0\right),
\end{align*}
where $t\in F^\times$, $\lambda\in E^\times$, $z\in E$.
Their effect and character are
\begin{equation}\label{eq:padic-plane-transitions}
 (\zeta,x,y)\longmapsto(\zeta,tx,t^{-1}y),\qquad
 \chi(T(t,\lambda,z))=1.
\end{equation}
These are all the identifications among normalized points. To verify this, let an element of $P\cap wP^tw$ have
\[
 H_0=\begin{pmatrix}\alpha&\beta\\0&\epsilon\end{pmatrix},\quad
 k=\begin{pmatrix}\eta&\theta\\0&\lambda\end{pmatrix},\quad
 X=\begin{pmatrix}0&z\\0&v_0\end{pmatrix},\quad Y=re_{12},
\]
with $\bar\eta\lambda=\mu\in F^\times$,
$\theta/\lambda,r\in E^-$. The source and target Levi equations for
normalized points imply
\[
 \alpha=\lambda,\quad\epsilon=\eta,\quad
 \beta=v_0=0,\quad\zeta'=\zeta,\quad t=\eta/\lambda\in F^\times,
\]
\[
 \theta/\lambda=(x'-tx)/2,\qquad r=(y-ty')/2.
\]
The last two quantities also belong to $F$, hence vanish. This gives
\eqref{eq:padic-plane-transitions}. These relations hold also when $x=0$ or $y=0$.
The reductive characters cancel, and the two unipotent traces are $z$ and $-z$. By \eqref{eq:padic-plane-transitions}, the residual $F^\times$-action on the normalized family carries the trivial character. Moreover,
\begin{equation}\label{eq:padic-plane-transpose}
 g_\zeta(x,y)^t=g_\zeta(y,x).
\end{equation}

To apply the slice restriction statement of
Section~\ref{subsec:padic-tools}, we must check that the action map is
a surjective submersion and identify its fibers and density character.
The normal form in Section~\ref{sec:common3221} gives the surjective
map
\[
 \rho:\HH\times(E^\times\times F^2)\longrightarrow C\setminus C_0.
\]
Its differential also has full rank at the axes and at the origin. Linearizing the complete transition equations at the identity gives
\[
\dot\zeta=0,\qquad \dot x=\dot t\,x,\qquad \dot y=-\dot t\,y,
\]
with independent parameters $\dot t\in F$ and $\dot\lambda,\dot z\in E$. These give all the kernel directions, since the complete transition equations remain valid over the dual numbers and their solution uses only invertible pivots. Hence $\dim_F\ker d\rho=1+2+2=5$ everywhere. At $(x,y)=(0,0)$, the $\dot t$-direction becomes a stabilizer direction rather than a moving slice direction, so the kernel dimension does not increase. Since the source and target dimensions are $38$ and $33$, respectively, the differential is surjective at every point.

The fibers of $\rho$ are described by the group
\[
 L=F^\times\times E^\times\times E,
\]
acting by
\[
 (h_0,m)\longmapsto
 (h_0T(t,\lambda,z)^{-1},(\zeta,tx,t^{-1}y)).
\]
To check the group law despite the source-dependent notation, write
$m=(\zeta,x,y)$ and $a_tm=(\zeta,tx,t^{-1}y)$. Direct multiplication gives
\[
 j_m(z_1)j_m(z_2)=j_m(z_1+z_2),\qquad
 h(t,\lambda)j_m(z)h(t,\lambda)^{-1}=j_{a_tm}(z).
\]
Thus $T(l_2;a_{t_1}m)T(l_1;m)=T(l_2l_1;m)$, where the first two
coordinates multiply and the last adds. Also
\[
 \sigma(T(t,\lambda,z;m))=
       T(t^{-1},\lambda^{-1},-z;\sigma m).
\]
The action is free with fibers equal to its orbits. Use product Haar
densities $dh_0\,d^\times\zeta\,dx\,dy$ and
$d^\times t\,d^\times\lambda\,dz$.
The base Jacobian is $|t|_F|t|_F^{-1}=1$; the change in $h_0$ is a
right translation in the unimodular group $\HH$; the vertical group is
unimodular. Each pair $(s,v)$ in~\eqref{eq:padic-plane-transitions} has
$\chi(s,v)=1$. Transpose exchanges
$x,y$ and sends vertical parameters to $(t^{-1},\lambda^{-1},-z)$,
preserving these densities. The slice restriction statement says that pullback by such a surjective submersion is injective; the preceding character and density calculations show that no additional character appears. Hence restriction injects the odd equivariant distribution space into
\[
 \{T\in\mathcal D'(E^\times\times F^2):
       t_*T=T,\quad\sigma_*T=-T\},
\]
where $t(\zeta,x,y)=(\zeta,tx,t^{-1}y)$ and
$\sigma(\zeta,x,y)=(\zeta,y,x)$.

Lemma~\ref{lem:padic-plane}, followed by localization in the fixed $E^\times$-coordinate, gives vanishing on $C\setminus C_0$.

The last subset is $Z$. Here transpose is realized directly by an
element of character one.

Write $A=\left(\begin{smallmatrix}a&b\\0&d\end{smallmatrix}\right)$,
$B=\left(\begin{smallmatrix}m&0\\p&q\end{smallmatrix}\right)$ and put
\[
 \eta_0=m/a\in F^\times,\quad k=\diag(\eta_0,1),\quad
 v_0=\frac{qb}{ad}-\frac pm,\quad
 \beta=\frac ba-\frac{pd}{mq}.
\]
Thus $\beta q=dv_0$. Set
\[
 s=D(k)u\left(\begin{pmatrix}0&bv_0\\0&dv_0\end{pmatrix},0\right).
\]
This is an element of $S$. The identity
\begin{equation}\label{eq:padic-Ztransporter}
 s\,g(A,B)=g(A,B)^t s^t
\end{equation}
follows by multiplying the blocks. Alternatively, set
\[
 H_0=\begin{pmatrix}1&\beta\\0&d/q\end{pmatrix},\qquad
 h=m(H_0,k)u(v_0e_{22},0).
\]
Then use the two identities
\[
 kA=B^tH_0,\qquad
 \begin{pmatrix}1&0\\(d/q)v_0&d/q\end{pmatrix}B=A^tk^t.
\]
The corresponding factors $L_{B^t}hL_A^{-1}$ and
$(L_{A^t}^{-1}whwL_B)^t$ are both the displayed $s$.

Set $t=(s,s^{-1})\in\HH$. By \eqref{eq:padic-Ztransporter},
\[
t\cdot g(A,B)=s\,g(A,B)(s^{-1})^t=g(A,B)^t.
\]
Moreover,
\[
\chi(t)=\eta(s)\eta(s^{-1})=1,\qquad \sigma(t)=(s^{-1},s)=t^{-1}.
\]
Thus $t^{-1}\tau$ is an involution in the extended stabilizer of $g(A,B)$, with $\widetilde\chi(t^{-1}\tau)=-1$. For any $x\in Z$, choose $h\in\HH$ carrying $x$ to such a representative. Then $\sigma(h)^{-1}th$ carries $x$ to $\tau(x)$ and has the same character and inverse properties. Lemma~\ref{lem:padic-orbits} therefore gives
$\mathcal D'(Z)^{\widetilde\HH,\widetilde\chi}=0$. We have now proved
vanishing on all four successive pieces of
$G_{3221}\supset C\supset C_0\supset Z$. Applying
\eqref{eq:closed-support-tools} successively through this closed
filtration proves the proposition.
\end{proof}

\subsubsection{Vanishing on $G_{3111}$}

\begin{proposition}\label{prop:padic-3111}
Assume that $H$ is quasi-split. Then there are no nonzero odd
$(\HH,\chi)$-equivariant distributions on $G_{3111}$. Equivalently,
\[
 \mathcal D'(G_{3111})^{\widetilde\HH,\widetilde\chi}=0.
\]
\end{proposition}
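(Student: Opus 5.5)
We follow the pattern used for $G_{2110}$ and $G_{3221}$. The representative is $w=(2\ 5)$, and by \eqref{eq:rank-cover} every point of $G_{3111}$ lies in the $\HH$-orbit of some $x(A,B)=L_AwL_B$ with $A,B\in\GL_2(E)$. We first compute the unipotent stabilizer. That is, we solve $u_0w=wv_0^t$ with $u_0,v_0\in U$, which is equivalent to $wv_0^tw\in U$ since $w=w^t=w^{-1}$.

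Because $w$ swaps only $e_2$ and $e_5$, the condition kills the entries of $X$ and $Y$ that would become sub-diagonal after conjugation. It should leave a vector group of parameters, of the form $X=\left(\begin{smallmatrix}t&\ast\\ \ast&0\end{smallmatrix}\right)$ together with a few entries of $Y$ constrained by the unitary equation. Levi transport $u=L_Au_0L_A^{-1}$, $v=L_B^tv_0L_B^{-t}$ then expresses the character of $(u,v^{-1})$ as $\psi(\Tr(\cdots))$ of an explicit linear form in these parameters, with coefficients given by entries of $A$ and $B$, analogous to \eqref{eq:padic-2110char}. The trace pairing is nondegenerate, so this character is trivial exactly on a locus $C_N$ cut out by linear equations in the entries of $A$ and $B$. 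As in the $G_{2110}$ case, $C_N$ is closed, $\widetilde\HH$-stable, and intrinsically defined. It is closed because the stabilizer dimension is constant, so $C_N$ is the zero locus of the functional $\ell$ on the stabilizer bundle. Off $C_N$, Lemma~\ref{lem:padic-orbits} gives vanishing.

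On $C_N$ we try to realize transpose by a character-one element, as in \eqref{eq:padic-involutive-transporter}. First, left and right multiplication by unipotent elements $Q(r,s)$ of character one, together with diagonal elements of $K$, should bring $x(A,B)$ to a normal form $y$ with few parameters in $E^\times$. Then we look for a diagonal $k\in K$ with $t=(D(k),D(k)^{-1})$ satisfying $t\cdot y=y^t$. Such a $t$ has $\chi(t)=1$ and $\sigma(t)=t^{-1}$, and conjugating it back gives the required element at every point of $C_N$. Lemma~\ref{lem:padic-orbits} then excludes odd distributions on $C_N$, and \eqref{eq:closed-support-tools} combines the two pieces.

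The main obstacle is the analysis of $C_N$. The full $\HH$-stabilizer, including its Levi part, may impose a further character condition, as happened on $G_{3221}$. It may also happen that no diagonal transporter exists on part of $C_N$. If so, we will refine the filtration as in Lemma~\ref{lem:3221-filtration}. We compute the full stabilizer from $P\cap wP^tw$ and its character, analogous to \eqref{eq:3221-full-character}, and remove further pieces where a unipotent element has nontrivial character. On the residual locus we use a transporter of the form $(s,s^{-1})$, as on $Z$ in Proposition~\ref{prop:padic-3221}. If a residual piece still has continuous moduli exchanged by $\tau$, we would reduce it through a slice to a scaling-invariant problem on $F^2$ and apply Lemma~\ref{lem:padic-plane}.
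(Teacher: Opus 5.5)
\textbf{Verdict: genuine gap.} Your outline has the same architecture as the paper's proof. You compute the unipotent stabilizer at $w=(2\ 5)$, cut out a closed compatibility locus, use Lemma~\ref{lem:padic-orbits} off it, use a character-one transporter as in \eqref{eq:padic-involutive-transporter} on it, and glue with \eqref{eq:closed-support-tools}. But you do not establish the step that carries the content: that such a transporter exists at \emph{every} point of the locus. Moreover, the form you propose for it fails.

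Suppose $t\in\HH$ satisfies $t\cdot x(A,B)=\tau(x(A,B))=x(B^t,A^t)$. The $P\cap wP^tw$ analysis forces the Levi block $H_0$ to be lower triangular, with $kA=B^tH_0$. Hence the $K$-component $k$ of $t$ sends the second column $v$ of $A$ to a multiple of the second column of $B^t$. On the compatible locus $p=-\bar d$, $q=-\bar b$, that column is $-J\bar v$.

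Now take a point with $v=e_2$, for instance $A=I_2$ and $B=\left(\begin{smallmatrix}0&\rho\\-1&0\end{smallmatrix}\right)$. Then $k$ must send the line of $e_2$ to that of $e_1$, so no diagonal $k$, and hence no transporter $(D(k),D(k)^{-1})$ with $k$ diagonal, can work. The paper uses $(D(J),D(J))$ there, whose character is $\chi_E(-1)^2=1$.

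The same computation blocks your normalization step. Every $\HH$-move between points $x(A,B)$ changes $v$ only by $v\mapsto kv$ and scalars. So the $K$-orbit of the line $[v]$ is an invariant, and diagonal elements of $K$ together with unipotents cannot bring a general point to a normal form with few parameters.

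The missing idea is to use the full group $K$ and split according to $v^\dagger Jv$.
\begin{itemize}
\item If $v^\dagger Jv\neq0$, take the nondiagonal $k=C_*C_v^{-1}$, with $C_v=\left(\begin{smallmatrix}\bar b&b\\-\bar d&d\end{smallmatrix}\right)$ and $C_*=\left(\begin{smallmatrix}1&1\\-1&1\end{smallmatrix}\right)$; it sends $v$ to $(1,1)^t$. The triangular moves, built from central elements $u(0,Y)$ of $U$ paired with lower triangular Levi blocks, then reach a \emph{symmetric} representative, so $t=1$ suffices.
\item If $v^\dagger Jv=0$, choose $k\in K$ sending $v$ to $e_2$. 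One reaches the representative above, where the antidiagonal transporter works.
\end{itemize}
Consequently none of your contingencies is needed on this stratum: no further filtration by full-stabilizer characters, and no reduction to Lemma~\ref{lem:padic-plane}.

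Some smaller inaccuracies:
\begin{itemize}
\item Your guessed stabilizer is off. For $v_0=u(X,Y)$, the first row of $X$ vanishes and $Y=\epsilon e_{21}$ with $\epsilon+\bar\epsilon+r\bar s+s\bar r=0$. This is a nonabelian unipotent group of $F$-dimension five, not a vector group.
\item Its character is $\psi(\Tr(-(\bar d+p)r-(\bar b+q)s))$. Your Lie-algebra zero-locus argument for closedness still works, since the stabilizers are connected unipotent groups.
\item The elements used to change representatives need not have character one, and your $Q(r,s)$ generally do not. Only the transporter at the representative must.
\end{itemize}
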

\begin{proof}
The stabilizer character first reduces the problem to a closed subset
$C_5$. Points of $C_5$ are then normalized so that transpose is
realized by an element of character one.

Here $w=(2\ 5)$, so
\[
 w=\begin{pmatrix}e_{11}&0&e_{21}\\0&I_2&0\\e_{12}&0&e_{22}\end{pmatrix},
 \quad
 x(A,B)=\begin{pmatrix}
 Ae_{11}B&0&Ae_{21}B^*\\0&I_2&0\\A^*e_{12}B&0&A^*e_{22}B^*
 \end{pmatrix}.
\]
Write $A=[u,v]=\left(\begin{smallmatrix}a&b\\c&d\end{smallmatrix}\right)$
and $B=\left(\begin{smallmatrix}\alpha\\\beta\end{smallmatrix}\right)
=\left(\begin{smallmatrix}m&n\\p&q\end{smallmatrix}\right)$.
The full unipotent stabilizer at $w$ consists of
\[
 v_0=u\left(\begin{pmatrix}0&0\\r&s\end{pmatrix},\epsilon e_{21}\right),
 \qquad
 u_0=u\left(\begin{pmatrix}0&0\\-\bar s&-\bar r\end{pmatrix},
                    \epsilon e_{21}\right),
\]
where $r,s\in E$ and
$\epsilon+\bar\epsilon+r\bar s+s\bar r=0$.
The parameters $r,s$ are unrestricted; the remaining trace-zero part
of $\epsilon$ is free. Conjugation to $x(A,B)$ yields
\begin{equation}\label{eq:padic-3111char}
 \chi(u,v_1^{-1})=
 \psi\bigl(\Tr(- (\bar d+p)r-(\bar b+q)s)\bigr).
\end{equation}
Thus the stabilizer character is trivial exactly on the subset
\begin{equation}\label{eq:padic-3111compatible}
 C_5:\qquad\beta=-v^\dagger J,
 \quad\text{equivalently }p=-\bar d,\ q=-\bar b.
\end{equation}

The unipotent stabilizers have constant $F$-dimension five, so their Lie algebras form an algebraic vector bundle over $G_{3111}$. The additive functional underlying the character of $U\times U$ restricts to a section of its dual bundle. Since the stabilizers are connected unipotent groups in characteristic zero, the character is trivial precisely where this section vanishes. Thus $C_5$ is closed in $G_{3111}$. It is stable under $\HH$ and $\tau$ by normality of $U\times U$ and invariance of its character. For every point of $G_{3111}\setminus C_5$ the unipotent stabilizer
character is nontrivial. Lemma~\ref{lem:padic-orbits} therefore gives
vanishing on this open complement.

On $C_5$ two changes of representative, both effected by elements of character one, reduce the transpose problem to explicit matrices. Put $H(v)=v^\dagger Jv$. The first change uses triangular matrices. For
$g,j\in E^\times$ and $h,h_R\in E$, put
\[
 G_0=\begin{pmatrix}g&0\\h&j\end{pmatrix},\quad
 H_0=\begin{pmatrix}g&0\\h_R&1/\bar j\end{pmatrix},\qquad
 A'=AG_0^{-1},\quad B'=H_0^tB.
\]
This transition is effected by central unipotent factors and has
character one. Explicitly, with
\[
 Y_L=\diag(h_R/g,-\overline{h_R/g}),\qquad
 Y_R=\diag(h/g,-\overline{h/g}),
\]
one has
\[
 L_{G_0}u(0,Y_L)=w(L_{H_0}u(0,Y_R))^tw,
\]
and conjugation gives $u_Lx(A,B)=x(A',B')u_R^t$ with
\[
 u_L=L_Au(0,Y_L)L_A^{-1},\qquad
 u_R=L_{B'}^tu(0,Y_R)L_{B'}^{-t}.
\]
On columns and rows this operation is
\begin{equation}\label{eq:padic-3111triangular}
 v'=v/j,\quad u'=(u-(h/j)v)/g,\quad
 \beta'=\beta/\bar j,\quad\alpha'=g\alpha+h_R\beta.
\end{equation}
The second change is conjugation by $D(k)$. For $k\in K$ and $\mu=\lambda(k)$, direct block multiplication gives
\begin{equation}\label{eq:padic-3111K}
 D(k)x(A,B)D(k)^{-1}
 =x(kA\diag(1,\mu^{-1}),Bk^{-1}).
\end{equation}
The acting pair $(D(k),D(k^{-t}))$ has character one. Both operations
preserve \eqref{eq:padic-3111compatible}.

\paragraph{The case $H(v)\ne0$.}
Suppose $H(v)=v^\dagger Jv\ne0$. Set
\[
 C_v=\begin{pmatrix}\bar b&b\\-\bar d&d\end{pmatrix},\quad
 C_* =\begin{pmatrix}1&1\\-1&1\end{pmatrix},\quad k=C_*C_v^{-1}.
\]
Then $C_v^\dagger JC_v=\diag(-H(v),H(v))$, so
$k\in K$, $\lambda(k)=2/H(v)$ and $kv=(1,1)^t$.
Apply \eqref{eq:padic-3111K} and then
\eqref{eq:padic-3111triangular} with $j=H(v)/2$, $g=1$, $h=h_R=0$.
Now $v=v_*=(1,1)^t$ and $\beta=(-1,-1)$.
Write $u=z(1,-1)^t+t(1,1)^t$, where $z\ne0$.
Taking $j=1$, $g=z$, $h=t$, $h_R=z\alpha v_*/2$ gives
\[
 A=\begin{pmatrix}1&1\\-1&1\end{pmatrix},\qquad
 B=\begin{pmatrix}\lambda/2&-\lambda/2\\-1&-1\end{pmatrix},
 \qquad\lambda\in E^\times.
\]
The corresponding matrix is
\[
 y_\lambda=\begin{pmatrix}
 (\lambda/2)u_*u_*^t&0&-v_*v_*^t/2\\
 0&I_2&0\\-v_*v_*^t/2&0&u_*u_*^t/(2\bar\lambda)
 \end{pmatrix},\qquad u_*=(1,-1)^t,
\]
and is symmetric. 

\paragraph{The case $H(v)=0$.}
Suppose $H(v)=0$, still with $v\ne0$. Choose a standard basis vector
$e_0$ such that $c_0=e_0^\dagger Jv\ne0$, and set
\[
 e=e_0/\bar c_0,\qquad e'=e-(e^\dagger Je)v/2,\qquad
 C=[e',v].
\]
Then $C^\dagger JC=J$, so $k=C^{-1}\in K$ sends $v$ to $e_2$.
After \eqref{eq:padic-3111K}, write
$u=(a,c)^t$, $\alpha=(m,n)$ and $\beta=(-1,0)$, with $a,n\ne0$.
Taking $j=1$, $g=a$, $h=c$, $h_R=am$ in
\eqref{eq:padic-3111triangular} gives
\[
 A=I_2,\qquad B=\begin{pmatrix}0&\rho\\-1&0\end{pmatrix},\quad
 \rho=an\in E^\times,
\]
and the matrix
\[
 z_\rho=\begin{pmatrix}
 \rho e_{12}&0&-e_{22}\\0&I_2&0\\-e_{11}&0&\bar\rho^{-1}e_{21}
 \end{pmatrix}.
\]
It satisfies $D(J)z_\rho D(J)^{-1}=z_\rho^t$.
Here $t=(D(J),D(J))$ has character
$\omega(J)^2=\chi_E(-1)^2=1$ and satisfies $\sigma(t)=t^{-1}$.

For either normalized representative $y=y_\lambda$ or $y=z_\rho$, let $t_y=1$ in the nonisotropic case and $t_y=(D(J),D(J))$ in the isotropic case. The preceding calculations give
\[
t_y\cdot y=\tau(y),\qquad \chi(t_y)=1,\qquad \sigma(t_y)=t_y^{-1}.
\]
Now let $x\in C_5$. By the two normalizations above, there exist $h\in\HH$ and a normalized representative $y$ such that $h\cdot x=y$. Set $t_x=\sigma(h)^{-1}t_yh$. Since $\tau(h\cdot x)=\sigma(h)\cdot\tau(x)$ and $\chi\circ\sigma=\chi$, we obtain
\[
t_x\cdot x=\tau(x),\qquad \chi(t_x)=1,\qquad \sigma(t_x)=t_x^{-1}.
\]
Consequently $t_x^{-1}\tau$ is an element of the extended
stabilizer of $x$ with $\widetilde\chi(t_x^{-1}\tau)=-1$.
Lemma~\ref{lem:padic-orbits} states that this stabilizer condition
rules out a nonzero odd equivariant distribution on the orbit and,
for the algebraic action considered here, on the whole invariant
subset. Hence
\[
\mathcal D'(C_5)^{\widetilde\HH,\widetilde\chi}=0.
\]
Since $C_5$ is closed and invariant and we already have vanishing on
its complement, \eqref{eq:closed-support-tools} gives the proposition.
\end{proof}

\subsubsection{Conclusion for the quasi-split group}

Propositions~\ref{prop:open}, \ref{prop:padic-2000}, \ref{prop:padic-2110}, \ref{prop:padic-3221}, and \ref{prop:padic-3111} establish vanishing on all five strata. The required closed filtration is obtained by putting
\[
 b(g)=\rank g_{\{3,4,5,6\},\{5,6\}},\qquad
 d(g)=\rank g_{\{5,6\},\{5,6\}}.
\]
The closed filtration
\[
 G\supset\{d\le1\}\supset\{b\le1\}
 \supset\{d=0\}\supset\{b=0\}\supset\varnothing
\]
has successive strata $4222,3221,3111,2110,2000$.
Equation~\eqref{eq:closed-support-tools} says that a distribution
supported on the next closed subset is the pushforward of a
distribution on that subset. Applying it successively along this
filtration gives
\[
 \mathcal D'(G)^{\widetilde\HH,\widetilde\chi}=0.
\]
This proves the non-Archimedean assertion of
Theorem~\ref{thm:distribution} for the quasi-split group.

\subsection{The non-quasi-split group}\label{sec:nonsplit-padic}

Here $H=\diag(1,d)$ with $-d\notin N(E^\times)$, and
$\tau(g)=C^{-1}g^tC$ as in~\eqref{eq:tau-common}. By
Propositions~\ref{prop:open} and~\ref{prop:padic-2000}, only
$G_{3111}$ remains.

\begin{proposition}\label{prop:padic-nonsplit-3111}
Assume that $H$ is non-quasi-split. Then there are no nonzero odd
$(\HH,\chi)$-equivariant distributions on $G_{3111}$. Equivalently,
\[
 \mathcal D'(G_{3111})^{\widetilde\HH,\widetilde\chi}=0.
\]
\end{proposition}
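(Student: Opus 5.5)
We follow the template of Proposition~\ref{prop:padic-3111}, adapted to $H=\diag(1,d)$ and the weighted transpose $\tau(g)=C^{-1}g^tC$. The representative of $G_{3111}$ is now $w=(2\ 6)$. By \eqref{eq:rank-cover}, every point is $\HH$-equivalent to $x(A,B)=L_AwL_B$ with $A,B\in\GL_2(E)$.

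\textbf{Step 1: the unipotent stabilizer and its character.} First we solve $u_0w=w\tau(v_0)$ with $u_0,v_0\in U$. Exactly as in the quasi-split case, this gives a vector group with two free parameters $r,s\in E$ and one parameter $\epsilon$ subject to a trace condition; the constraint now carries the weight $d$ coming from $H$. Transporting to $x(A,B)$ by $L_A$ and $L_{\tau_2(B)}$ produces a character of the form $\psi(\Tr(\ell_1(A,B)r+\ell_2(A,B)s))$, where $\ell_1,\ell_2$ are linear in one column of $A$ and one row of $B$, twisted by $H$. Set $C_5'$ to be the locus where $\ell_1=\ell_2=0$. This locus should read $\beta=-v^\dagger H$, possibly up to a sign or a factor of $d$. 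As before, the stabilizers form a vector bundle of constant rank, so $C_5'$ is the zero locus of a section of its dual bundle. Hence $C_5'$ is closed, and it is $\widetilde\HH$-stable. Lemma~\ref{lem:padic-orbits} then kills odd distributions on the open complement $G_{3111}\setminus C_5'$.

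\textbf{Step 2: normalizing points of $C_5'$.} We use the two operations that were available before, each of which has character one:
\begin{itemize}
\item the central-unipotent and triangular change \eqref{eq:padic-3111triangular}, rewritten with $H$ in place of $J$;
\item conjugation by $D(k)$, $k\in K$, as in \eqref{eq:padic-3111K}.
\end{itemize}
The key simplification is that $H$ is anisotropic. Hence $H(v)=v^\dagger Hv\neq0$ for every nonzero $v$, so only the analogue of the nonisotropic case occurs.

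We choose $k\in K$ carrying $v$ to a fixed vector $v_*$, after rescaling $H(v)$ by the triangular operation. This is possible because $K$ acts transitively on vectors of a given Hermitian length up to similitude: Witt's theorem on the anisotropic plane, with the similitude absorbing the factor from $F^\times$. One concrete choice is $v_*=e_2$, so that $u$ can be moved into the $H$-orthogonal line spanned by $e_1$. Then the triangular operation with $g,h,h_R$ removes the remaining parameters. This should leave a one-parameter family $y_\lambda$, $\lambda\in E^\times$, built from $e_1,e_2$, with $A$ diagonal and $B$ of the form forced by $\beta=-v^\dagger H$.

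\textbf{Step 3: realizing $\tau$ on the representatives.} We check that $\tau(y_\lambda)=y_\lambda$. Because $C$ commutes with diagonal blocks, weighted transpose acts as ordinary transpose up to the factors of $d$; these factors should cancel when $A$ and $B$ are diagonal or antidiagonal with the correct $d$-weights. If $y_\lambda$ is not fixed on the nose, we look for an element $t=(D(k_0),D(k_0))$ with $k_0\in K$ diagonal, $\omega(k_0)^2=1$ and $t\cdot y_\lambda=\tau(y_\lambda)$. Candidates are $k_0=\diag(1,-1)$ or $k_0=H$, analogous to $D(J)$ before.

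Once such a $t$ exists, conjugating by the normalizing element $h$ gives $t_x=\sigma(h)^{-1}t_yh$ at every $x\in C_5'$. Then $t_x^{-1}\tau$ lies in the extended stabilizer and has character $-1$. Lemma~\ref{lem:padic-orbits} gives $\mathcal D'(C_5')^{\widetilde\HH,\widetilde\chi}=0$, and \eqref{eq:closed-support-tools} combines the two pieces into the proposition.

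\textbf{Main obstacle.} I expect the main difficulty to be Step 2, namely showing that every point of $C_5'$ really reaches a representative on which $\tau$ acts trivially, or acts through a character-one element. The concern is the bookkeeping of $d$ and of similitude factors. Specifically, the transitivity of $K$ on nonzero vectors must be checked modulo the norm group $N(E^\times)$: the value $H(v)$ is only determined up to $\lambda(k)\in F^\times$ times norms. I would first try to absorb any leftover class in $F^\times/N(E^\times)$ with the $j$-parameter of the triangular operation, since $j\in E^\times$ scales $H(v)$ by $N(j)^{-1}$. If that fails, I would instead exhibit an explicit $k_0\in K$ depending on $\lambda$ that transports $y_\lambda$ to $\tau(y_\lambda)$ with $\omega(k_0)\omega(\tau_2(k_0)^{-1})=1$.
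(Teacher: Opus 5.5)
Your proposal is correct and follows the paper's proof essentially step for step. The unipotent stabilizer at $w=(2\ 6)$ gives the character obstruction off a closed compatible locus. On that locus, conjugation by $D(k)$ and central-unipotent triangular changes of parameter (only the nonisotropic case occurs, since $H$ is anisotropic) reach a $\tau$-fixed representative.

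Your hedged points all resolve as you hoped:
\begin{itemize}
\item The compatible locus is $(p,q)=-d^{-1}v^\dagger H$.
\item The explicit element $k=\diag(-d,1)C_v^{-1}$, with $C_v=\left(\begin{smallmatrix}-d\bar e&b\\ \bar b&e\end{smallmatrix}\right)$, sends $v=(b,e)^t$ to $e_2$ with multiplier $d/h(v)\in F^\times$, so no norm-class obstruction arises.
\item The final representative, with $A=I_2$ and $B=\diag(\rho,-1)$, is fixed by $\tau$ itself, so no auxiliary transporter $t$ is needed.
\end{itemize}
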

\begin{proof}
A unipotent stabilizer character first removes an open part of the
stratum. The remaining points are moved, by elements of character one,
to representatives fixed by $\tau$.

Let $w=(2\ 6)$, viewed as a permutation matrix.  The restriction of $W$ to the second
and sixth coordinate plane is $\bigl(\begin{smallmatrix}0&d\\d&0\end{smallmatrix}\bigr)$, so
$w\in G$, $w^2=I_6$, and $\tau(w)=w$.
Every $\HH$-orbit in this stratum has a representative
\begin{equation}\label{eq:ns-padic-3111-cover}
 x(A,B)=L_AwL_B,
 \qquad
 A=\begin{pmatrix}a&b\\c&e\end{pmatrix}=[u,v],
 \qquad B=\begin{pmatrix}m&n\\p&q\end{pmatrix}
                   \quad (A,B\in\GL_2(E)).
\end{equation}

For arbitrary $r,s\in E$, choose $\epsilon\in E$ with
\[
           \epsilon+\bar\epsilon=-d\,r\bar r-s\bar s,
\]
which is possible since $\operatorname{char}F=0$.  Set
\[
 v_0=u\!\left(\begin{pmatrix}0&0\\r&s\end{pmatrix},
                    \epsilon e_{22}\right),\qquad
 u_0=u\!\left(\begin{pmatrix}0&0\\-\bar r&-\bar s\end{pmatrix},
                    \epsilon e_{22}\right).
\]
Both matrices belong to $U$, and multiplication gives
$u_0w=w\tau(v_0)$.  Consequently
\[
 u_A=L_Au_0L_A^{-1},\qquad
 v_B=\tau(L_B)v_0\tau(L_B)^{-1}
\]
belong to $U$ and satisfy $u_Ax(A,B)=x(A,B)\tau(v_B)$.
Their character is
\begin{equation}\label{eq:ns-padic-3111-character}
 \chi(u_A,v_B^{-1})=
 \psi\!\left(\operatorname{Tr}_{E/F}
        \bigl[-(\bar b+dp)r-(\bar e+q)s\bigr]\right).
\end{equation}
Indeed, conjugation by $L_A$ sends the $(1,2)$ block of $u_0$
to $A(X_{u_0})$, whose trace is $-b\bar r-e\bar s$.
Conjugation by $\tau(L_B)$ sends the corresponding block of $v_0$
to $\tau_2(B)(X_{v_0})$, whose trace is $dp\,r+q\,s$.
Taking the field trace yields
\eqref{eq:ns-padic-3111-character}.

The character obstruction in Lemma~\ref{lem:padic-orbits} applies unless
\begin{equation}\label{eq:ns-padic-compatible}
     p=-\bar b/d,\qquad q=-\bar e,
     \qquad\text{or equivalently}\qquad
     (p,q)=-d^{-1}v^\dagger H.
\end{equation}
Thus it remains to treat the closed subset defined by this equation. For each of its points we will find a point in the same $\HH$-orbit that is fixed by $\tau$.

The remaining points can be simplified without changing the character.

The required changes of parameters come from the center of $U$ and do not change the character. For
$\gamma,j\in E^\times$ and $h,h_R\in E$ define
\begin{align*}
 G_0&=\begin{pmatrix}\gamma&0\\h&j\end{pmatrix},&
 H_0&=\begin{pmatrix}\gamma&0\\h_R&\bar j^{-1}\end{pmatrix},\\
 Q_L&=\begin{pmatrix}0&dh_R/\gamma\\
                -\bar h_R/\bar\gamma&0\end{pmatrix},&
 Q_R&=\begin{pmatrix}0&dh/\gamma\\
                -\bar h/\bar\gamma&0\end{pmatrix},\\
 U_L&=u(0,Q_L),& U_R&=u(0,Q_R).
\end{align*}
The matrices $Q_L,Q_R$ satisfy $HQ+Q^\dagger H=0$, so
$U_L,U_R$ lie in the center of $U$.  Direct multiplication gives
\begin{equation}\label{eq:ns-padic-central-intersection}
                 L_{G_0}U_L=w\tau(L_{H_0}U_R)w.
\end{equation}
For
\begin{equation}\label{eq:ns-padic-parameter-change}
             A'=AG_0^{-1},\qquad B'=\tau_2(H_0)B,
\end{equation}
put
\[
 s_L=L_AU_LL_A^{-1},\qquad
 s_R=\tau(L_{B'})U_R\tau(L_{B'})^{-1}.
\]

Both $s_L$ and $s_R$ are conjugates of central unipotent elements of $U$, and hence have character one. Since $L_{A'}=L_A L_{G_0}^{-1}$ and $L_{B'}=\tau(L_{H_0})L_B$, equation~\eqref{eq:ns-padic-central-intersection} gives
\begin{equation}\label{eq:ns-padic-central-transport}
s_Lx(A,B)=L_AU_LwL_B=L_{A'}w\tau(U_R)L_{B'}=x(A',B')\tau(s_R).
\end{equation}
Thus $(s_L,s_R^{-1})$ sends $x(A,B)$ to $x(A',B')$.

These changes produce representatives fixed by $\tau$.

Suppose \eqref{eq:ns-padic-compatible} holds, and put
\[
 h(v)=v^\dagger Hv=b\bar b+d e\bar e.
\]
As $v\neq0$ and $H$ is anisotropic, $h(v)\neq0$.  The matrices
\begin{equation}\label{eq:ns-padic-k-normalizer}
 C_v=\begin{pmatrix}-d\bar e&b\\\bar b&e\end{pmatrix},
 \qquad k=\diag(-d,1)C_v^{-1}
\end{equation}
satisfy
\[
 C_v^\dagger HC_v=h(v)\diag(d,1),\qquad
 kv=e_2,\qquad k^\dagger Hk=\ell H,
 \qquad \ell=\frac d{h(v)}.
\]
Thus $k\in K$ with multiplier $\ell$.

Conjugating $x(A,B)$ by $D(k)$ gives the new parameters
\begin{equation}\label{eq:ns-padic-diagonal-transport}
 A_1=kA\diag(1,\ell^{-1}),\qquad B_1=Bk^{-1},
 \qquad
 x(A_1,B_1)=D(k)x(A,B)D(k)^{-1}.
\end{equation}

Conjugation by $D(k)$ is effected by $(D(k),\tau(D(k)^{-1}))\in\HH$, whose character is
\[
\omega(k)\omega(\tau_2(k^{-1}))=1.
\]
Put $\beta=(p,q)=-d^{-1}v^\dagger H$. Since $k^\dagger Hk=\ell H$, we have $Hk^{-1}=\ell^{-1}k^\dagger H$. Thus the second column of $A_1$ and the second row of $B_1$ are
\[
(A_1)_{\bullet2}=\ell^{-1}kv=\ell^{-1}e_2,\qquad
(B_1)_{2\bullet}=\beta k^{-1}=-\frac{1}{d\ell}(kv)^\dagger H=(0,-\ell^{-1}).
\]

The first parameter change is obtained by applying \eqref{eq:ns-padic-parameter-change} with $\gamma=1$, $h=h_R=0$, and $j=\ell^{-1}$. Since $\ell\in F^\times$, this gives $G_0=\diag(1,\ell^{-1})$ and $\tau_2(H_0)=\diag(1,\ell)$. Consequently,
\[
A_2=A_1\diag(1,\ell),\qquad B_2=\diag(1,\ell)B_1.
\]
The second column of $A_2$ is $e_2$, while the second row of $B_2$ is $(0,-1)$. Since both matrices are invertible, they have the form
\[
A_2=\begin{pmatrix}a_0&0\\c_0&1\end{pmatrix},\qquad
B_2=\begin{pmatrix}m_0&n_0\\0&-1\end{pmatrix},\qquad a_0m_0\ne0.
\]
A second application uses with $\gamma=a_0$, $h=c_0$, $j=1$, and $h_R=a_0n_0/d$. Then
\[
G_0=A_2,\qquad \tau_2(H_0)=\begin{pmatrix}a_0&a_0n_0\\0&1\end{pmatrix}.
\]
It follows that $A_3=A_2G_0^{-1}=I_2$ and
\[
B_3=\begin{pmatrix}a_0&a_0n_0\\0&1\end{pmatrix}
\begin{pmatrix}m_0&n_0\\0&-1\end{pmatrix}
=\begin{pmatrix}a_0m_0&0\\0&-1\end{pmatrix}.
\]
Writing $\rho=a_0m_0\in E^\times$, we obtain $A_3=I_2$ and $B_3=\diag(\rho,-1)$.
The representative is therefore
\begin{equation}\label{eq:ns-padic-fixed-representative}
 x_\rho=
 \begin{pmatrix}
 \rho e_{11}&0&-e_{22}\\
 0&I_2&0\\
 -e_{22}&0&\bar\rho^{-1}e_{11}
 \end{pmatrix},\qquad \tau(x_\rho)=x_\rho.
\end{equation}
Thus every orbit satisfying~\eqref{eq:ns-padic-compatible} contains
a point fixed by $\tau$. At such a point $\tau$ belongs to the
extended stabilizer and $\widetilde\chi(\tau)=-1$.
Lemma~\ref{lem:padic-orbits} therefore gives vanishing on the closed
subset defined by~\eqref{eq:ns-padic-compatible}; the same lemma has
already given vanishing on its complement. Equation
\eqref{eq:closed-support-tools} proves the proposition.
\end{proof}

Together with Propositions~\ref{prop:open},
\ref{prop:padic-2000}, and~\ref{prop:padic-nonsplit-3111}, the closed
filtration from Section~\ref{sec:orbits} gives
\[
 \mathcal D'(G)^{\widetilde\HH,\widetilde\chi}=0
\]
in the non-quasi-split case as well.

\subsection{The multiplicity bound}

For either Hermitian plane we have proved
$\mathcal D'(G)^{\widetilde\HH,\widetilde\chi}=0$. If $T$ is
$(\HH,\chi)$-equivariant, then $T-\tau_*T$ is odd and must vanish;
hence every such distribution is $\tau$-invariant.

The Gelfand--Kazhdan criterion in
Section~\ref{subsec:padic-tools} states that this distributional
symmetry implies, for every irreducible admissible smooth
representation $\pi$,
\[
 \dim\Hom_S(\pi,\eta)\,
 \dim\Hom_{\tau(S)}(\pi^\vee,\eta\circ\alpha)\leq1,
 \qquad\alpha(g)=\tau(g)^{-1}.
\]
Lemma~\ref{lem:dual} states that $\pi^\vee\simeq\pi\circ\alpha$, and
$\alpha(\tau(S))=S$. Thus
\[
 \Hom_{\tau(S)}(\pi^\vee,\eta\circ\alpha)
 \simeq\Hom_S(\pi,\eta).
\]
The square of the desired dimension is at most one. This proves
Theorem~\ref{thm:main} over every non-Archimedean field of
characteristic zero, for both forms. No assumption on the residue
characteristic was used.

\section{Multiplicity one over an Archimedean local field}\label{sec:real}

Throughout this section $E/F=\C/\R$, $H\in\{J,I_2\}$ and $\tau(g)=g^t$.
We write $\psi(t)=e^{i\kappa t}$ with $\kappa\in\R^\times$, and use the
notation and the bi-invariant metric of Section~\ref{subsec:real-tools}.
All tangent and normal spaces are real.

To prove Theorem~\ref{thm:distribution} over $\R$, we must show that an
odd $(\HH,\chi)$-equivariant tempered generalized function satisfying a
Casimir eigenvalue equation vanishes. Proposition~\ref{prop:open}
already gives vanishing on the open cell $O$. To exclude support on the
boundary, including normal derivatives along its strata, we use the
support lemmas of Section~\ref{subsubsec:real-support}.

In Section~\ref{subsec:casimir}, we express the Casimir operator, on
equivariant generalized functions, as a nonzero multiple of a transverse
vector field plus a tangential operator. Lemma~\ref{lem:transverse} then excludes support on
$G_{3111}$ and $G_{2000}$ in both cases, and on $G_{2110}$ in the
quasi-split case. The remaining quasi-split stratum $G_{3221}$ is treated
in Section~\ref{subsec:real-qs-boundary}. For the filtration constructed
in Section~\ref{sec:common3221}, we exhibit either unipotent stabilizers
with nontrivial character or normal vectors of nonzero length on each
successive difference. This verifies the hypotheses of
Lemma~\ref{lem:real-filtration} and completes the boundary argument.

\subsection{The Casimir operator and the lower strata}\label{subsec:casimir}

For $Y\in U(\g_\C)$ let $Y^L$ be the left-invariant differential
operator of Section~\ref{subsec:real-tools}. Thus $Y^L_g=gY$ for
$Y\in\g$, and $(YY')^L=Y^LY'^L$. Put
\[
 X_0=N(I_2,0)=\begin{pmatrix}0&I_2&0\\0&0&-I_2\\0&0&0\end{pmatrix}\in\uu,
 \qquad\mathcal X=X_0^L,
 \qquad H_\rho=\diag(-4I_2,0,4I_2)\in\mm.
\]
Thus $\mathcal X_g=gX_0$.

\begin{lemma}\label{lem:casimir}
Put $D_M=\Omega_M+H_\rho\in U(\mm_\C)$. If a generalized function $f$ on
$G$ satisfies $f(g\tau(u))=\xi(u)f(g)$ for all $u\in U$, then
\begin{equation}\label{eq:casimir}
 \Omega f=2i\kappa\,\mathcal Xf+D_M^Lf.
\end{equation}
In particular, \eqref{eq:casimir} holds for every generalized function
satisfying~\eqref{eq:equiv}.
\end{lemma}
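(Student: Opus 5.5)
Choose the basis of $\g$ adapted to $\g=\uu\oplus\mm\oplus\uu^t$. Take a real basis $(Y_a)$ of $\uu$. Since $B$ pairs $\uu$ and $\uu^t$ nondegenerately and both are isotropic, there is a dual basis $(Y^a)$ of $\uu^t$ with $B(Y_a,Y^b)=\delta_{ab}$. Together with a $B$-orthonormal-type pair of dual bases $(M_j),(M^j)$ of $\mm$, this gives
\[
 \Omega=\Omega_M+\sum_a\bigl(Y_aY^a+Y^aY_a\bigr)
       =\Omega_M+2\sum_a Y^aY_a+\sum_a[Y_a,Y^a].
\]
The first step is to compute the commutator term $\sum_a[Y_a,Y^a]$. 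It lies in $\mm$, and pairing with $Z\in\mm$ gives $B(Z,\sum_a[Y_a,Y^a])=\tr(\ad Z|_{\uu})$ (over $\R$). This is the standard $2\rho$ element. With the block action of $\Ad(m(a,k))$ on $\uu$ recorded in~\eqref{eq:unipotent-density}, I would identify it explicitly as $H_\rho=\diag(-4I_2,0,4I_2)$, up to the sign conventions forced by the ordering. Getting the sign right dictates writing $\sum_a Y_aY^a$ with $\uu^t$ on the right or the left, and I will arrange the reordering so that the $\uu^t$-factor acts first on $f$.

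Next, use the right $\tau(U)$-equivariance. Since $\tau(U)=U^t$ has Lie algebra $\uu^t$, and $f(g\tau(u))=\xi(u)f(g)$, differentiation gives $(Y^t)^Lf=d\xi(Y)f$ for $Y\in\uu$. Here $d\xi(N(A,C_0))=i\kappa\Tr(\tr A)=2i\kappa\Ree\tr A$. Hence, in each product with the $\uu^t$-element applied first, $(Y_a)^L(Y^a)^Lf=d\xi(\tau(Y^a))\,Y_a^Lf$. The order here depends on the convention $(YY')^L=Y^LY'^L$, and I will place $\uu^t$ rightmost. Summing then gives $\sum_a d\xi(\tau(Y^a))Y_a$, which is the element of $\uu$ representing the functional $d\xi\circ\tau$ on $\uu^t$ under $B$. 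I would check that this element is $i\kappa X_0$: for $Y=N(A,C_0)^t$ one computes $B(X_0,Y)=\Ree\tr(A^t)+\Ree\tr\bigl((H^{-1}A^\dagger H)^t\bigr)=2\Ree\tr A$. So the dual vector of $d\xi\circ\tau$ is $i\kappa X_0$, and the doubled sum yields $2i\kappa\mathcal X f$.

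The main obstacle is bookkeeping rather than ideas. One must reorder $Y^aY_a$ versus $Y_aY^a$ so that the $\uu^t$ factor acts first on $f$, and verify that the resulting commutator is exactly $+H_\rho$ with the stated normalization. I would first test this on the $\mathfrak{sl}_2$-like piece spanned by $X_0$, $X_0^t$, and their bracket $[X_0,X_0^t]=\diag(I_2,0,-I_2)$ multiplied appropriately. The constant $4$ is $\dim_\R$-weighted: the $\uu$ blocks $A$ and $C_0$ contribute $\dim_\R$ equal to $8$ and $4$ with weights $1$ and $2$, giving $\tr\ad$ scaled to match $B$. Finally, since~\eqref{eq:equiv} contains the right $\tau(U)$-equivariance with character $\xi$, the last sentence of the lemma follows at once.
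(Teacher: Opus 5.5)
Your proposal is correct and follows essentially the paper's argument: dual bases adapted to $\g=\uu\oplus\mm\oplus\uu^t$, reordering so that the $\uu^t$ factor acts first on $f$, identifying the commutator term through $\tr(\operatorname{ad}Z|_{\uu})$, and differentiating the right $U^t$-equivariance to see that $i\kappa X_0$ is the $B$-dual of $d\xi\circ\tau$. The sign you hedge on comes out right: with $\uu^t$ rightmost the commutator term is $\sum_a[Y^a,Y_a]$. Your pairing formula then gives $B\bigl(Z,\sum_a[Y^a,Y_a]\bigr)=-\tr(\operatorname{ad}Z|_{\uu})=8\ell(Z_1)-8\Ree\tr A=B(Z,H_\rho)$ for $Z=\diag(A,Z_1,\ell(Z_1)I_2-H^{-1}A^\dagger H)\in\mm$, so the term is exactly $+H_\rho$; it lies in $\mm$ because, as in the paper, a basis graded by $\operatorname{ad}\diag(I_2,0,-I_2)$ has a graded dual basis.
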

\begin{proof}
Let $\g=\bigoplus_{k=-2}^{2}\g_k$ be the eigenspace decomposition of
$\operatorname{ad}\diag(I_2,0,-I_2)$; note that $\diag(I_2,0,-I_2)\in\mm$.
Then $\g_0=\mm$, $\uu=\g_1\oplus\g_2$ and $\uu^t=\g_{-1}\oplus\g_{-2}$.
By invariance of $B$, $B(\g_k,\g_l)=0$ unless $k+l=0$. Hence $\uu$ and
$\uu^t$ are isotropic, $\mm$ is orthogonal to both, and $B$ restricts to
nondegenerate pairings of $\uu$ with $\uu^t$ and of $\mm$ with itself.

Choose a basis $(E_j)$ of $\uu$ with each $E_j$ in $\g_1$ or $\g_2$. Let
$(F_j)$ be the $B$-dual basis of $\uu^t$, and let $(M_a)$, $(M^a)$ be
$B$-dual bases of $\mm$, so that $\Omega_M=\sum_aM_aM^a$. Then
$(E_j,M_a,F_j)$ and $(F_j,M^a,E_j)$ are $B$-dual bases of $\g$, and
\begin{equation}\label{eq:casimir-decomposition}
 \Omega=\Omega_M+\sum_j(E_jF_j+F_jE_j)
       =2\sum_jE_jF_j+\Omega_M+\sum_j[F_j,E_j].
\end{equation}
Since $B$ pairs $\g_k$ only with $\g_{-k}$, we have $F_j\in\g_{-k}$
whenever $E_j\in\g_k$; hence $[F_j,E_j]\in\g_0=\mm$ for every $j$. To
identify $\sum_j[F_j,E_j]$, let $Y\in\mm$.
Invariance of $B$ gives
\[
 B\Bigl(Y,\sum_j[F_j,E_j]\Bigr)=\sum_jB([Y,F_j],E_j)
 =\tr\bigl(\operatorname{ad}Y|_{\uu^t}\bigr)
 =-\tr\bigl(\operatorname{ad}Y|_{\uu}\bigr),
\]
where the last equality holds because $B$ identifies $\uu^t$ with the
dual of $\uu$ as an $\mm$-module. Write
$Y=\diag(A,Z_1,\ell(Z_1)I_2-H^{-1}A^\dagger H)$ with $Z_1\in\kk$, and
note that $\Ree\tr Z_1=\ell(Z_1)$. On $\g_1\simeq M_2(\C)$ the operator
$\operatorname{ad}Y$ is $X\mapsto AX-XZ_1$. On $\g_2$ it is
$C'\mapsto AC'-C'(\ell(Z_1)I_2-H^{-1}A^\dagger H)$. Both have real trace
$4\Ree\tr A-4\ell(Z_1)$. Hence
\[
 \tr\bigl(\operatorname{ad}Y|_{\uu}\bigr)=8\Ree\tr A-8\ell(Z_1)
 =-B(Y,H_\rho).
\]
As $B$ is nondegenerate on $\mm$, this shows $\sum_j[F_j,E_j]=H_\rho$,
and therefore $\Omega=2\sum_jE_jF_j+D_M$.

Now let $F\in\uu^t$, and write $F=N^t$, where $N\in\uu$ has upper
adjacent block $A$. The element $\exp(tN)\in U$ has upper adjacent block
$tA$, so the hypothesis gives
\[
 f(g\exp(tF))=f\bigl(g\tau(\exp tN)\bigr)
 =\psi\bigl(\Tr(\tr tA)\bigr)f(g)=e^{2i\kappa t\Ree\tr A}f(g).
\]
Direct multiplication gives
$B(N,X_0^t)=\Ree\tr A+\Ree\tr(H^{-1}A^\dagger H)=2\Ree\tr A$, and $B$ is
invariant under transpose. Therefore
\[
 F^Lf=i\kappa\,B(F,X_0)\,f\qquad(F\in\uu^t).
\]
Since $X_0=\sum_jB(X_0,F_j)E_j$, we obtain
\[
 \sum_j(E_jF_j)^Lf=\sum_jE_j^L\bigl(i\kappa B(F_j,X_0)f\bigr)
 =i\kappa X_0^Lf=i\kappa\,\mathcal Xf.
\]
Applying $\Omega=2\sum_jE_jF_j+D_M$ to $f$ gives~\eqref{eq:casimir}.
Throughout, derivatives of generalized functions are defined by duality.
\end{proof}

\begin{lemma}\label{lem:casimir-transverse}
The operator $D_M^L$ is tangential to every rank stratum. The vector
field $\mathcal X$ is transverse to the strata $G_{3111}$, $G_{2110}$
and $G_{2000}$ (the second is empty if $H=I_2$); that is,
$\mathcal X_g\notin T_gG_R$ for every $g$ in each of these strata $G_R$.
\end{lemma}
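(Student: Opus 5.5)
For tangentiality, each rank stratum $G_R$ is a $P\times\tau(P)$ double coset, so it is stable under right multiplication by $\tau(P)$. It is in particular stable under right multiplication by $M=\tau(M)$. Hence for every $Y\in\mm$ the left-invariant vector field $Y^L_g=gY$ is tangent to $G_R$, since $g\exp(tY)\in G_R$ for all $t$. The operator $D_M^L=\Omega_M^L+H_\rho^L$ is a polynomial in such vector fields with constant coefficients. It is therefore locally a sum of products of vector fields tangent to every stratum, which is exactly the tangential condition in Lemma~\ref{lem:transverse}.

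For transversality, the tangent space of the double coset at $g$ is
\[
 T_gG_R=\mathfrak p\,g+g\,\tau(\mathfrak p),
\]
where $\mathfrak p=\mm\oplus\uu$ and $\tau(\mathfrak p)=\mm\oplus\uu^t$. Left-translating to the identity, $\mathcal X_g\in T_gG_R$ if and only if
\[
 X_0\in\Ad(g^{-1})\mathfrak p+\mm+\uu^t .
\]
Both $\mathcal X$ and $G_R$ are invariant under the transformations we use, up to controlled changes. Indeed, $\Ad(\tau(p))$ for $p\in P$ preserves $\mm+\uu^t$. Left multiplication by $P$ changes only the $\Ad(g^{-1})\mathfrak p$ term, and that term depends on $g$ only through the coset $Pg$. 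Moreover, right multiplication by $\tau(p)$ sends $X_0$ to $\Ad(\tau(p)^{-1})X_0$. By the cover \eqref{eq:rank-cover}, I would therefore reduce to $g=L_AwL_B$. Since $L_B$ normalizes $\uu$ and $\mm+\uu^t$, and since $\Ad(L_B^{-1})X_0$ is $X_0$ with its $\g_1$-block modified by $B$, the claim becomes the following. For the permutation representative $w$ of $G_{3111}$, $G_{2110}$, or $G_{2000}$, and for every $N\in\g_1$ that is a nonzero element of the form $\Ad(L_B^{-1})X_0$ (upper block $B^{-1}$, lower block its $H$-adjoint), we need
\[
 N\notin\Ad(w^{-1})\mathfrak p+\mm+\uu^t .
\]
The left factor $L_A$ can be absorbed into $P$ and drops out.

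To verify this I would project to $\uu$ along $\mm\oplus\uu^t$ and check that the $\uu$-component of $\Ad(w)^{-1}\mathfrak p$ misses the $\g_1$-directions $N(B^{-1},0)$ that occur. For these three strata $r_{12}\le 1$, so $wB_0\cap A_0^\perp$ has positive dimension; this is the invariant $t\ge 1$. Concretely, for the permutation $w$ the subspace $\Ad(w^{-1})\mathfrak p\cap\g_1$, modulo the terms coming from $\mm+\uu^t$, consists of matrices whose upper block has rank at most $r_{12}-1\le 1$, or lies in a proper coordinate subspace. Meanwhile $B^{-1}$ is invertible of rank two. I would carry this out by listing, for each of $w=(2\ 5)$, $(1\ 3)(2\ 5)(4\ 6)$, and $j$ (and $(2\ 6)$, $j$ when $H=I_2$), which root spaces of $\g_1$ lie in $\Ad(w^{-1})\mathfrak p$; this is a matter of reading off permuted block positions.

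The main obstacle is the projection step. An element of $\Ad(w^{-1})\mathfrak p$ with a component in $\g_2$ or $\g_{-1}$ could combine with elements of $\mm+\uu^t$ and produce an unexpected $\g_1$-component. I would handle this by grading: because $w$ is a permutation matrix, $\Ad(w^{-1})\mathfrak p$ is a sum of root spaces for the diagonal torus. So membership can be tested coordinatewise on the $\g_1$-root spaces, and the rank-two condition on $B^{-1}$ then gives the contradiction.
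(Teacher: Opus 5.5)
Your tangentiality argument is the paper's. For transversality you take a genuinely different, more structural route, and it works.

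You identify $T_gG_R=\mathfrak p\,g+g\,\tau(\mathfrak p)$ exactly. You then use $P\times\tau(P)$-equivariance to reduce to showing $N(C,0)\notin\Ad(w^{-1})\mathfrak p+\mm+\uu^t$ for invertible $C$ at a permutation representative $w$. Your projection worry is settled by grading. The element $h_0=\diag(I_2,0,-I_2)$ is diagonal, and $\Ad(w^{-1})$ preserves the diagonal subalgebra of $\g$, which lies in $\mathfrak p$. Hence $h_0\in\Ad(w^{-1})\mathfrak p$, and this subalgebra is $\operatorname{ad}h_0$-graded. So only $\Ad(w^{-1})\mathfrak p\cap\g_1$ matters, and it is cut out by the vanishing of matrix entries. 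Reading off positions, the allowed entries of the $(1,2)$ block are:
\begin{itemize}
\item the first row for $(2\ 5)$ and $(2\ 6)$;
\item the second column for $(1\ 3)(2\ 5)(4\ 6)$;
\item none for $j$.
\end{itemize}
So no invertible $C$ fits.

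The paper never computes $T_gG_R$. It uses only three facts:
\begin{itemize}
\item $\rank g_{33}$ and $\rank(g_{32},g_{33})$ are constant along each stratum;
\item the bottom block row moves with velocity $(0,g_{31},-g_{32})$ under $\mathcal X$;
\item on $G_{3111}$, the fixed-rank relation $\dot\Lambda(\ker\Lambda)\subset\operatorname{im}\Lambda$ holds.
\end{itemize}
From these it reaches a contradiction with invertibility of $g$. That argument is shorter and needs no representatives or case list. Your computation is heavier but also shows why $G_{3221}$ must be excluded. For $(1\ 4)(3\ 6)$ the allowed $(1,2)$-block entries form the lower triangular matrices, which include invertible ones. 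Indeed $wX_0w^{-1}=E_{46}+E_{21}-E_{65}-E_{13}\in\mathfrak p$, so $\mathcal X_w$ is tangent to $G_{3221}$ at $w$.

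A few slips to fix when you write it out:
\begin{itemize}
\item The bound is rank at most $r_{12}\le1$, not $r_{12}-1$. What is actually needed is that a whole row or column of $C$ is killed; a coordinate subspace such as the diagonal matrices would not suffice.
\item The invariant $t$ is $\dim(A_0\cap wB_0)$, not $\dim(wB_0\cap A_0^\perp)$.
\item For the right $\tau(S)$-factor of the cover, preservation of $\mm+\uu^t$ alone is not enough. The group $U^t$ acts nontrivially on $\g/(\mm+\uu^t)$, since it moves $\g_2$ into $\g_1$. What you need is $\Ad(\tau(s))X_0\equiv X_0$ modulo $\mm+\uu^t$. This holds because $\Ad(D(k))X_0=X_0$, $\operatorname{ad}(\uu^t)$ strictly lowers the $h_0$-grading, and $X_0\in\g_1$.
\item The condition at $g\tau(p)$ involves $\Ad(\tau(p))X_0$ rather than $\Ad(\tau(p)^{-1})X_0$; this is harmless.
\end{itemize}
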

\begin{proof}
For $Y\in\mm$ the flow of $Y^L$ is right multiplication by
$\exp(tY)\in M\subset\tau(P)=P^t$. It preserves every $P$--$\tau(P)$
double coset, that is, every rank stratum (Lemma~\ref{lem:rank-common}).
Hence $Y^L$ is tangent to every stratum, and $D_M^L$, a polynomial in
such vector fields with constant coefficients, is tangential to every
stratum.

Write $g=(g_{ij})_{1\le i,j\le3}$ in $2\times2$ blocks, so that
$r_{21}=\rank(g_{32},g_{33})$ and $r_{22}=\rank g_{33}$. Since
$\mathcal X_g=gX_0$, along the integral curve $g\exp(tX_0)$ the bottom
block row moves with velocity
\begin{equation}\label{eq:casimir-bottom-derivative}
 (\dot g_{31},\dot g_{32},\dot g_{33})=(0,\,g_{31},\,-g_{32}).
\end{equation}
Suppose that $\mathcal X_g$ is tangent to the stratum through $g$.

On $G_{2110}$ we have $g_{33}=0$ and $\rank g_{32}=1$. Since $g_{33}$
vanishes identically on the stratum, tangency gives
$\dot g_{33}=-g_{32}=0$, a contradiction.

On $G_{2000}$ we have $g_{32}=g_{33}=0$. Tangency gives
$\dot g_{32}=g_{31}=0$. Then the bottom block row of $g$ vanishes,
contradicting invertibility.

On $G_{3111}$ the $2\times4$ matrix $\Lambda=(g_{32},g_{33})$ has rank
one, and so does $g_{33}$. Hence
$\operatorname{im}g_{32}\subset\operatorname{im}\Lambda
=\operatorname{im}g_{33}$. Choose a nonzero row vector $\varpi$ with
$\varpi\Lambda=0$. If $\Lambda(t)$ is a smooth curve of matrices of
constant rank, every $v\in\ker\Lambda(0)$ extends to a smooth curve
$v(t)\in\ker\Lambda(t)$; differentiating $\Lambda(t)v(t)=0$ at $t=0$
gives
\begin{equation}\label{eq:casimir-fixed-rank-tangent}
 \dot\Lambda(\ker\Lambda)\subset\operatorname{im}\Lambda.
\end{equation}
The rank of $\Lambda$ is constant on the stratum. By
\eqref{eq:casimir-bottom-derivative}, tangency therefore gives
$\varpi(g_{31}x-g_{32}y)=0$ whenever $g_{32}x+g_{33}y=0$. For every
$x\in\C^2$ such a $y$ exists, because
$\operatorname{im}g_{32}\subset\operatorname{im}g_{33}$; as
$\varpi g_{32}=0$, we get $\varpi g_{31}x=0$ for all $x$. Thus
$\varpi(g_{31},g_{32},g_{33})=0$, again contradicting invertibility of
$g$.
\end{proof}

\begin{proposition}\label{prop:casimir-extension}
Let $f$ be a generalized function on $G$ satisfying~\eqref{eq:equiv} and
$\Omega f=\gamma f$ for some $\gamma\in\C$.
\begin{enumerate}
\item If $H=J$ and $f|_{G'}=0$, then $f=0$.
\item If $H=I_2$ and $f|_O=0$, then $f=0$.
\end{enumerate}
\end{proposition}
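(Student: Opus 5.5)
The plan is to extend vanishing outward along the open chain~\eqref{eq:open-chain}, removing one closed stratum at a time with Lemma~\ref{lem:transverse}. The key input is Lemma~\ref{lem:casimir}. Since $f$ satisfies~\eqref{eq:equiv}, it satisfies~\eqref{eq:casimir}, so the hypothesis $\Omega f=\gamma f$ becomes
\[
 \bigl(2i\kappa\,\mathcal X+D_M^L-\gamma\bigr)f=0 .
\]
Dividing by $2i\kappa\ne0$, this has the form $(V+D)f=0$. Here $V=\mathcal X$, and $D=(2i\kappa)^{-1}(D_M^L-\gamma)$ is a finite-order differential operator. The operator $D$ is tangential to every rank stratum: $D_M^L$ is tangential by Lemma~\ref{lem:casimir-transverse}, and the constant $\gamma$ is a zeroth-order term, hence trivially tangential.

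For $H=J$, I proceed in two steps, the first being the step from $G'$ to $G_1$. Assume $f|_{G'}=0$ and restrict $f$ to the open set $G_1=\{r_{22}\ge1\}$. The difference $G_1\setminus G'=G_{3111}$ is a smooth closed submanifold of $G_1$ by Lemma~\ref{lem:rank-common} and the discussion after~\eqref{eq:open-chain}. Hence $f|_{G_1}\in\cgen(G_1;G_{3111})$. The equation $(V+D)f=0$ holds on $G_1$, since restriction to an open set commutes with differential operators. By Lemma~\ref{lem:casimir-transverse}, $\mathcal X_g\notin T_gG_{3111}$ for every $g\in G_{3111}$. Lemma~\ref{lem:transverse} therefore gives $f|_{G_1}=0$.

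The second step repeats the same argument twice. First pass from $G_1$ to $G_2=\{r_{12}\ge1\}$, where the new closed stratum is $G_{2110}$. Then pass from $G_2$ to $G$, where it is $G_{2000}$. Transversality of $\mathcal X$ to each of these strata is again Lemma~\ref{lem:casimir-transverse}, and we conclude $f=0$.

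For $H=I_2$, the chain is $O\subset\{r_{22}\ge1\}\subset G$, with differences $G_{3111}$ and then $G_{2000}$. Starting from $f|_O=0$, the identical two-step argument gives $f=0$.

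I expect no serious obstacle beyond bookkeeping. The one point to state explicitly is that Lemma~\ref{lem:transverse} is applied on an open ambient manifold, namely the current member of the chain, in which the stratum is closed. Lemma~\ref{lem:transverse} is local and holds for arbitrary generalized functions, so no temperedness is needed. The other point is that the zeroth-order term $-\gamma$ qualifies as tangential in the sense required by Lemma~\ref{lem:transverse}, because multiplication by a smooth function is an order-zero tangential operator.
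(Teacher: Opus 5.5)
Your proposal is correct and follows the paper's proof step for step. As in the paper, you rewrite $\Omega f=\gamma f$ via Lemma~\ref{lem:casimir} as $(\mathcal X+D_\gamma)f=0$ with $D_\gamma=(D_M^L-\gamma)/(2i\kappa)$ tangential, and then apply Lemma~\ref{lem:transverse} successively along the open chains, using Lemma~\ref{lem:casimir-transverse} for transversality on $G_{3111}$, $G_{2110}$, $G_{2000}$ (respectively $G_{3111}$, $G_{2000}$ when $H=I_2$).
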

\begin{proof}
By Lemma~\ref{lem:casimir},
\[
 (\mathcal X+D_\gamma)f=0,\qquad
 D_\gamma=\frac{D_M^L-\gamma}{2i\kappa},
\]
and by Lemma~\ref{lem:casimir-transverse} the operator $D_\gamma$ is
tangential to every rank stratum.

We apply Lemma~\ref{lem:transverse}, the first-order case
of~\cite[Lemma~2.2]{JSZ}, with $V=\mathcal X$ and $D=D_\gamma$.
For $H=J$, consider the open sets
$G'\subset G_1\subset G_2\subset G$ of~\eqref{eq:open-chain}. Their
successive differences $G_{3111}$, $G_{2110}$ and $G_{2000}$ are closed
submanifolds of $G_1$, $G_2$ and $G$, respectively
(Lemma~\ref{lem:rank-common} and the discussion
after~\eqref{eq:open-chain}). If $f|_{G'}=0$, then $f|_{G_1}$ is
supported in $G_{3111}$, and Lemmas~\ref{lem:transverse}
and~\ref{lem:casimir-transverse} give $f|_{G_1}=0$. In the same way
$f|_{G_2}=0$, and then $f=0$.

Let $H=I_2$. The chain is $O\subset\{r_{22}\ge1\}\subset G$, with
differences $G_{3111}$ and $G_{2000}$, and the same argument applies.
\end{proof}

\subsection{The quasi-split boundary}\label{subsec:real-qs-boundary}

Assume $H=J$. The complement of $O$ in $G'$ is $T=G_{3221}$. By
Section~\ref{sec:common3221} it has the filtration
\[
 T\supset C\supset C_0\supset Z\supset\varnothing
\]
defined by~\eqref{eq:C}--\eqref{eq:Z}. Its members are $\HH$-stable and
closed in $G'$, and its successive differences are embedded submanifolds
of $G$. We verify the hypotheses of Lemma~\ref{lem:real-filtration} on
these four differences. The first and third are unipotently
$\chi$-incompatible; the second and fourth are metrically proper.
We first explain how to check these properties on the families of
representatives from Section~\ref{sec:common3221}.

Let $Y\subset G$ be an $\HH$-stable submanifold. A
\emph{parametrization} of $Y$ is a smooth map $p\mapsto g_p$ from a
manifold $\mathcal P$ to $Y$ such that
\begin{equation}\label{eq:parametrization}
 \HH\times\mathcal P\longrightarrow Y,\qquad(h,p)\longmapsto h\cdot g_p,
\end{equation}
is a surjective submersion. Section~\ref{sec:common3221} provides one for
each layer. For $T\setminus C$ and $C_0\setminus Z$ we restrict the
covering~\eqref{eq:cover} to the parameters $(A,B)$ with $c\ne n$, and
with $c=n=0$ and $dq/a$, $dq/m$ not both real, respectively. The
products of $\HH$ with these parameter sets are the preimages of the
layers under~\eqref{eq:cover}. A submersion restricts to a submersion
from the preimage of a submanifold onto that submanifold, so the
restricted maps are again surjective submersions.
For $C\setminus C_0$ we use the normalized family~\eqref{eq:normalized},
and for $Z$ the diagonal family~\eqref{eq:diagonalAB}.

\begin{lemma}\label{lem:unipotent-criterion}
Let $p\mapsto g_p$ be a parametrization of an $\HH$-stable submanifold
$Y\subset G$. Suppose that there are smooth maps $s,v:\mathcal P\to S$
such that, for every $p\in\mathcal P$, the elements $s_p$ and $v_p$ are
unipotent, $s_pg_p=g_pv_p^t$, and $\chi(s_p,v_p^{-1})\ne1$. Then $Y$ is
unipotently $\chi$-incompatible.
\end{lemma}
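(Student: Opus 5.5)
We verify Definition~\ref{def:unipotent-incompatible} directly, taking as local slice the image of (a small open piece of) $\mathcal P$ and as stabilizing map $a(g_p)=(s_p,v_p^{-1})\in\HH$. Three things must be checked: that $a$ fixes $g_p$, that $\chi(a)\ne1$, and that $a$ acts unipotently on the normal space $T_{g_p}G/T_{g_p}Y$. The first two are immediate from the hypotheses. Indeed, $(s_p,v_p^{-1})\cdot g_p=s_pg_p\tau(v_p^{-1})=s_pg_p(v_p^t)^{-1}=g_p$ by $s_pg_p=g_pv_p^t$, and $\chi(a(g_p))=\chi(s_p,v_p^{-1})\ne1$. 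Smoothness of $p\mapsto a(g_p)$ follows from smoothness of $s,v$.

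For the slice condition, we use that \eqref{eq:parametrization} is a surjective submersion. Near any point of $Y$ we may therefore find an open subset $\mathcal P_0\subset\mathcal P$ on which $p\mapsto g_p$ is an immersion transverse to the $\HH$-orbits. We choose a submanifold of $\mathcal P$ complementary to the fibre directions of the submersion and restrict $g$ to it, so that $Z_0=\{g_p:p\in\mathcal P_0\}$ is a local $\HH$-slice in $Y$. If the map $p\mapsto g_p$ is not injective locally, we first pass to such a transversal; this is the only point needing care. The maps $s$ and $v$ restrict to it, and since $g_p$ then determines $p$ locally, $a$ is well defined on $Z_0$.

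For unipotence on the normal space, we note that $a=(s_p,v_p^{-1})$ acts on $G$ by the algebraic map $g\mapsto s_pg\tau(v_p)^{-1}$. Its differential at the fixed point $g_p$ is $X\mapsto s_pX\tau(v_p)^{-1}$ on $T_{g_p}G\subset M_6(\C)$. This map is the tensor product of left multiplication by the unipotent matrix $s_p$ and right multiplication by the unipotent matrix $\tau(v_p)^{-1}=(v_p^t)^{-1}$. Both commute, so their composite is unipotent on $M_6(\C)$ and hence on the invariant subspace $T_{g_p}G$. Since $Y$ is $\HH$-stable, $a$ preserves $T_{g_p}Y$, and the induced map on the quotient $T_{g_p}G/T_{g_p}Y$ is again unipotent.

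This verifies every condition of Definition~\ref{def:unipotent-incompatible}. The main (mild) obstacle is the slice bookkeeping: extracting from a parametrization, which need only be a submersion after acting by $\HH$, an honest local slice on which $a$ is a smooth function of the point. This is handled by the transversal choice described above.
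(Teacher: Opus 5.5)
Your proof is correct. The fixed-point, character and normal-unipotence checks agree with the paper's: your argument that commuting left and right multiplications by unipotent matrices compose to a unipotent map is equivalent to the paper's identity $g_pX\mapsto s_pg_pXv_p^{-t}=g_p\Ad(v_p^t)X$.

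The difference is in how the local slice is obtained. You build a genuine transversal. This means a submanifold $\mathcal P_0\ni p$ whose tangent space is mapped by $dg_p$ isomorphically onto a complement of the orbit directions $\s g_p+g_p\s^t$ in $T_{g_p}Y$; that is the precise meaning your phrase \emph{complementary to the fibre directions} should carry. It exists because the differential of \eqref{eq:parametrization} at $(1,p)$ is onto. You then shrink $\mathcal P_0$ until $g|_{\mathcal P_0}$ is an embedding, so that $a(g_p)=(s_p,v_p^{-1})$ is a smooth function on the image.

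The paper avoids this bookkeeping. It takes as slice an open neighbourhood $Y_0$ of an arbitrary point of $Y$ itself, for which $\HH\times Y_0\to Y$ is trivially a submersion. It then chooses a smooth local section $y\mapsto(h(y),p(y))$ of the surjective submersion \eqref{eq:parametrization} and sets $\phi(y)=h(y)\,a_{p(y)}\,h(y)^{-1}$. Conjugation by $h(y)$ transports the fixed-point, character and unipotence properties from $g_{p(y)}$ to $y$.

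What each route buys: the paper's needs no immersion or injectivity argument and directly gives slices near every point of $Y$. Yours gives slices only through the points $g_p$. To cover all of $Y$ you must add one line, either translating by $h\in\HH$ with $a$ conjugated, or noting that $\HH\cdot Z_0$ is open in $Y$. In exchange your slice is a true transversal to the orbits, which is closer to the geometric picture but not needed for Lemma~\ref{lem:real-unipotent}.
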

\begin{proof}
Put $a_p=(s_p,v_p^{-1})\in\HH$. Then $a_p\cdot g_p=s_pg_pv_p^{-t}=g_p$,
and the differential at $g_p$ of $x\mapsto a_p\cdot x$ is
\begin{equation}\label{eq:boundary-normal-action}
 g_pX\longmapsto s_pg_pXv_p^{-t}=g_p\Ad(v_p^t)X\qquad(X\in\g).
\end{equation}
This is unipotent because $v_p^t$ is. Hence $a_p$ induces a unipotent
endomorphism of $T_{g_p}G/T_{g_p}Y$.

Let $y_0\in Y$. Since~\eqref{eq:parametrization} is a surjective
submersion, it has a smooth local section $y\mapsto(h(y),p(y))$ on an
open neighborhood $Y_0$ of $y_0$ in $Y$. Put
\[
 \phi(y)=h(y)\,a_{p(y)}\,h(y)^{-1}\qquad(y\in Y_0).
\]
Then $\phi$ is smooth, $\phi(y)\cdot y=y$, and
$\chi(\phi(y))=\chi(a_{p(y)})\ne1$. The differential of $h(y)$ carries
$T_{g_{p(y)}}Y$ to $T_yY$ and conjugates the endomorphism induced by
$a_{p(y)}$ into the one induced by $\phi(y)$, which is therefore
unipotent. Since $Y_0$ is open in $Y$, the action map
$\HH\times Y_0\to Y$ is a submersion. Thus $Y_0$ is a local slice, and $\phi$ satisfies all three
conditions of Definition~\ref{def:unipotent-incompatible}
\cite[Definition~3.3]{JSZ}.
\end{proof}

For the metric calculations, let $Q=\diag(Q_1,Q_2,Q_3)\in\g$ in
$2\times2$ blocks and suppose $Q_1+Q_2+Q_3=0$. Since
$\s=D_*(\kk)\oplus\uu$, we have
\begin{equation}\label{eq:block-orthogonality}
 B(Q,S')=B(Q,S'^t)=0\qquad(S'\in\s).
\end{equation}
Indeed, for $Z_1\in\kk$ and $N\in\uu$ we have
$B(Q,D_*(Z_1)+N)=\Ree\tr\bigl((Q_1+Q_2+Q_3)Z_1\bigr)=0$, and similarly
for transposes.

\begin{lemma}\label{lem:metric-criterion}
Let $p\mapsto g_p$ be a parametrization of an $\HH$-stable submanifold
$Y\subset G$. Suppose that for every $p\in\mathcal P$ there is $Q\in\g$
such that
\begin{enumerate}
\item both $Q$ and $g_pQg_p^{-1}$ are block diagonal in $2\times2$
blocks, and the three diagonal blocks of each matrix sum to zero;
\item $B(Q,g_p^{-1}\dot g)=0$ for every tangent vector $\dot g$ at $g_p$
to the family $p\mapsto g_p$;
\item $B(Q,Q)\ne0$.
\end{enumerate}
Then $Y$ is metrically proper.
\end{lemma}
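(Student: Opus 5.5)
The plan is to show that, at each point $y=h\cdot g_p$ of $Y$, the vector $y\,(h\text{-transport of }Q)$ is orthogonal to $T_yY$ and has nonzero length. Then $v^\perp$ is a proper nondegenerate subspace containing $T_yY$, which is the form of Definition~\ref{def:metric-proper} noted right after it. The metric is bi-invariant, and the $\HH$-action $x\mapsto s_1x\tau(s_2)$ is a composition of left and right translations, hence an isometry. It therefore suffices to work at the points $g_p$ themselves and transport by $h$. So fix $p$ and set $v=g_pQ\in T_{g_p}G$. Condition (iii) gives $\langle v,v\rangle_{g_p}=B(Q,Q)\ne0$.

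Next I describe $T_{g_p}Y$. Since $\HH\times\mathcal P\to Y$ is a surjective submersion, $T_{g_p}Y$ is spanned by two kinds of vectors: the parameter derivatives $\dot g$ of $p\mapsto g_p$, and the orbit directions. The orbit directions are the derivatives of $s_1g_p\tau(s_2)$ at the identity, namely $S_1g_p+g_pS_2^t$ with $S_1,S_2\in\s$; here $\tau$ is transpose over $\R$. Orthogonality to the parameter directions is exactly condition (ii), because $\langle v,\dot g\rangle=B(Q,g_p^{-1}\dot g)$.

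For the orbit directions I compute
\[
 \langle g_pQ,\;S_1g_p+g_pS_2^t\rangle_{g_p}
 =B(Q,g_p^{-1}S_1g_p)+B(Q,S_2^t)
 =B(g_pQg_p^{-1},S_1)+B(Q,S_2^t),
\]
using $\Ad$-invariance of $B$ for the first term. The second term vanishes by \eqref{eq:block-orthogonality}, since $Q$ is block diagonal with blocks summing to zero. The first term vanishes by the same identity applied to $g_pQg_p^{-1}$, which also satisfies the hypothesis by condition (i). Hence $v\perp T_{g_p}Y$.

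Finally I transport to an arbitrary $y=h\cdot g_p$ using the differential of the isometry $x\mapsto h\cdot x$. That differential carries $T_{g_p}Y$ onto $T_yY$, because $Y$ is $\HH$-stable, and it preserves both the orthogonality and the nonzero length. The only point needing care, and it is a mild one, is \eqref{eq:block-orthogonality}: it requires that $g_pQg_p^{-1}$, and not only $Q$, be block diagonal with blocks summing to zero, and this is precisely why condition (i) is imposed on both matrices. I also have to check that the identity covers transposes of elements of $\s$, which the paper notes holds ``similarly for transposes.'' No step looks like a genuine obstacle.
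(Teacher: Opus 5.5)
Your proof is correct and is essentially the paper's argument. Both take $g_pQ$ as the normal vector of length $B(Q,Q)\ne0$, check orthogonality to the orbit directions $S_1g_p+g_pS_2^t$ via $\Ad$-invariance and the block-orthogonality identity, check orthogonality to the parameter directions via condition (2), and then spread the conclusion over all of $Y$ using the isometric $\HH$-action.
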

\begin{proof}
Fix $p$, put $g=g_p$, and consider $gQ\in T_gG$. Its squared length is
$\langle gQ,gQ\rangle_g=B(Q,Q)\ne0$. The differential
of~\eqref{eq:parametrization} at $(1,p)$ maps onto $T_gY$, so $T_gY$ is
spanned by the orbit directions $S_1g+gS_2^t$, with $S_1,S_2\in\s$, and
by the vectors $\dot g$ of~(2). By invariance of $B$
and~\eqref{eq:block-orthogonality},
\[
 \langle gQ,S_1g+gS_2^t\rangle_g=B(gQg^{-1},S_1)+B(Q,S_2^t)=0,
\]
and $\langle gQ,\dot g\rangle_g=B(Q,g^{-1}\dot g)=0$ by~(2). Hence
$gQ\perp T_gY$. Its orthogonal complement is a proper nondegenerate
subspace containing $T_gY$, so $Y$ is metrically proper at $g$ by
Definition~\ref{def:metric-proper}. Every point of $Y$
has the form $h\cdot g_p$, and each $h\in\HH$ acts on $G$ by a left
translation followed by a right translation. This is an isometry of the
bi-invariant metric, and it preserves $Y$. Thus $Y$ is metrically proper
at every point.
\end{proof}

We now treat the four layers, using the notation~\eqref{eq:AB}.

\begin{lemma}\label{lem:firstU}
The layer $T\setminus C$ is unipotently $\chi$-incompatible.
\end{lemma}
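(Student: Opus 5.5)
The plan is to apply Lemma~\ref{lem:unipotent-criterion} to the parametrization of $T\setminus C$ obtained by restricting the covering~\eqref{eq:cover} to the open parameter set $\mathcal P=\{(A,B)\in\GL_2(\C)^2: c\ne n\}$. As explained before Lemma~\ref{lem:unipotent-criterion}, this restriction is a surjective submersion onto $T\setminus C$. The required stabilizing family is the one found in the proof of Lemma~\ref{lem:3221-filtration}. For $z\in\C$, put $u_z=u(ze_{12},0)$; at $w$ one has $u_zw=wu_z^t$. At $g_p=g(A,B)=L_AwL_B$, set
\[
 s_p=L_Au_{z(p)}L_A^{-1},\qquad v_p=L_B^tu_{z(p)}L_B^{-t}.
\]
Then $s_pg_p=L_Au_zwL_B=L_Awu_z^tL_B=g_pv_p^t$. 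Both $s_p$ and $v_p$ lie in $U\subset S$, since $L_A$ and $L_B^t$ normalize $U$, and both are unipotent.

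It remains to choose $z(p)$ smoothly so that the character is nontrivial. By~\eqref{eq:firstchar}, up to the inversion convention in the second factor (which only changes the sign of the second trace contribution, already accounted for there), we have
\[
 \chi(s_p,v_p^{-1})=\psi\bigl(\Tr((c-n)z)\bigr)=e^{i\kappa\,2\Ree((c-n)z)}.
\]
I will take $z(p)=\pi\,\overline{(c-n)}/(\kappa|c-n|^2)$. This gives $2\Ree((c-n)z)=2\pi/\kappa$. Since the resulting value $e^{2\pi i}$ is trivial, I will adjust to $z(p)=\pi\,\overline{(c-n)}/(2\kappa|c-n|^2)$, which gives $\chi=e^{i\pi}=-1\ne1$. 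This choice is smooth on $c\ne n$, and indeed Nash.

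The only point needing care is checking that the sign and normalization in~\eqref{eq:firstchar} match the pair $(s_p,v_p^{-1})$ used in Lemma~\ref{lem:unipotent-criterion}. That lemma uses the convention $s_pg_p=g_pv_p^t$, whereas Lemma~\ref{lem:3221-filtration} writes the pair as $(L_Au_zL_A^{-1},(L_B^tu_zL_B^{-t})^{-1})$; these agree. I expect this bookkeeping to be the main obstacle, and it is minor. Any residual sign of $n$ is harmless: if the character were $\psi(\Tr((c+n)z))$ instead, the locus $C$ would be defined differently, contradicting Lemma~\ref{lem:3221-filtration}. Hence the formula $\psi(\Tr((c-n)z))$ can be taken as given. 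With $s,v$ smooth, unipotent, stabilizing, and of nontrivial character, Lemma~\ref{lem:unipotent-criterion} yields that $T\setminus C$ is unipotently $\chi$-incompatible.
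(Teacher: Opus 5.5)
Your proposal is correct and is essentially the paper's proof. You use the same parametrization by $g(A,B)$ with $c\ne n$, the same unipotent stabilizing pair built from $u(ze_{12},0)$ (note $L_B^t=L_{B^t}$), the character $\psi(\Tr((c-n)z))$ from \eqref{eq:firstchar}, and the same choice $z=\pi\,\overline{(c-n)}/(2\kappa|c-n|^2)$ giving the value $-1$, followed by Lemma~\ref{lem:unipotent-criterion}. Your hedge about a possible sign of $n$ is unnecessary: conjugating $ze_{12}$ by $A$ and by $B^t$ gives traces $cz$ and $nz$ directly, and the inverse on the second factor produces the minus sign.
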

\begin{proof}
Parametrize $T\setminus C$ by $g(A,B)$ with $(A,B)\in\GL_2(\C)^2$ and
$c\ne n$. For $z\in\C$ put
\[
 s=L_Au(ze_{12},0)L_A^{-1},\qquad v=L_{B^t}u(ze_{12},0)L_{B^t}^{-1}.
\]
These are unipotent elements of $U\subset S$. Since $L_B^t=L_{B^t}$, the
stabilizer calculation preceding~\eqref{eq:firstchar} gives
$sg(A,B)=g(A,B)v^t$, and~\eqref{eq:firstchar} gives
\[
 \chi(s,v^{-1})=\exp\bigl(i\kappa\Tr((c-n)z)\bigr).
\]
The choice $z=\pi\,\overline{(c-n)}/(2\kappa|c-n|^2)$ depends smoothly on
$(A,B)$ and makes this character equal to $e^{i\pi}=-1$.
Lemma~\ref{lem:unipotent-criterion} applies.
\end{proof}

\begin{lemma}\label{lem:Cmetric}
The layer $C^\times=C\setminus C_0$ is metrically proper.
\end{lemma}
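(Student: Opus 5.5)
We will apply Lemma~\ref{lem:metric-criterion} to the normalized parametrization $(\zeta,x,y)\mapsto g_\zeta(x,y)$ of Lemma~\ref{lem:3221-normal-form}. By that lemma the action map onto $C^\times$ is a surjective submersion, so this is a parametrization in the required sense. It then suffices to produce, at each $g=g_\zeta(x,y)$, a block-diagonal $Q=\diag(Q_1,Q_2,Q_3)\in\g$ with the following properties: $Q_1+Q_2+Q_3=0$; $gQg^{-1}$ is again block diagonal with block sum zero; $Q$ is $B$-orthogonal to $g^{-1}\partial_\zeta g$, $g^{-1}\partial_x g$ and $g^{-1}\partial_y g$; and $B(Q,Q)\ne0$.

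\textbf{Choosing $Q$.} Since $g=L_{A_x}wL_{B_{y,\zeta}}$, first choose a block-diagonal $Q_w\in\g$ that commutes with $w$ up to the Levi conjugations. For example, take $Q$ of the form $L_{B}^{-1}Q_wL_B$ where $wQ_ww^{-1}$ is again block diagonal. Then
\[
 gQg^{-1}=L_{A_x}(wQ_ww^{-1})L_{A_x}^{-1}
\]
is block diagonal automatically, because $L_A$, $L_B$ are block diagonal. The permutation $w=(1\ 4)(3\ 6)$ swaps coordinates $1\leftrightarrow4$ and $3\leftrightarrow6$ and fixes $2$ and $5$. Block-diagonality of $wQ_ww^{-1}$ therefore forces $Q_w$ to be diagonal, say
\[
 Q_w=\diag(q_1,\dots,q_6).
\]
The remaining constraints are the following:
\begin{itemize}
\item[(i)] membership in $\g$, namely $Q^\dagger W+WQ=\ell W$, which relates $q_i$ to $-\bar q_{7-i}+\ell$;
\item[(ii)] the two block-sum conditions $q_1+q_3+q_5=q_2+q_4+q_6=0$, and the analogous conditions after applying $w$;
\item[(iii)] orthogonality to the three parameter derivatives.
\end{itemize}
This leaves a small real linear system. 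We expect a one- or two-dimensional solution space, possibly involving purely imaginary entries.

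\textbf{Orthogonality.} We compute $g^{-1}\partial g$ directly from the explicit matrix \eqref{eq:normalizedmatrix}, or more cleanly from the factorization:
\[
 g^{-1}\partial_xg=L_B^{-1}w^{-1}\bigl(L_{A_x}^{-1}\partial_xL_{A_x}\bigr)wL_B .
\]
Hence
\[
 B\bigl(Q,g^{-1}\partial_xg\bigr)=B\bigl(wQ_ww^{-1},\,L_{A_x}^{-1}\partial_xL_{A_x}\bigr).
\]
Here $L_{A_x}^{-1}\partial_xL_{A_x}$ is supported in the first block, with a $1/2$ entry in the $(2,2)$ slot after conjugation, together with the corresponding $a^*$ block. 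Orthogonality thus becomes one linear condition on the diagonal entries. The $y$ and $\zeta$ derivatives similarly give conditions of the form
\[
 B\bigl(Q_w,\,L_B^{-1}\partial L_B\bigr)=0 .
\]
Since these Levi logarithmic derivatives are nilpotent or diagonal in explicit positions, the conditions only involve certain $q_i$.

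\textbf{Main obstacle.} The hard step is arranging $B(Q,Q)=\Ree\sum q_i^2\ne0$ compatibly with all the constraints. The membership condition (i) and the trace-zero conditions (ii) can force $Q$ toward isotropic combinations. We would first try
\[
 Q_w=\diag(i,-i,0,-i,i,0)\cdot\theta
\]
and its real analogues, adjusting for (i), and then check whether the $\zeta$-derivative, which is diagonal, kills it. If the diagonal ansatz fails, we would relax $wQ_ww^{-1}$ to being block diagonal with $2\times2$ off-diagonal entries inside the fixed coordinates $\{2,5\}$ and their block partners. Alternatively, we would conjugate $Q$ by a different element of the stabilizer. We would then recompute until a nonzero squared length appears. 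The squared length $B(Q,Q)$ is computed on the diagonal as a sum of squares of (possibly imaginary) numbers, so a nonzero value only requires avoiding exact cancellation between real and imaginary parts.
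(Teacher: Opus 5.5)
As written, this is a plan rather than a proof. Given Lemma~\ref{lem:metric-criterion}, the whole content of the statement is producing, at each $g_\zeta(x,y)$, a $Q$ satisfying hypotheses (1)--(3), and you never exhibit one. The ``main obstacle'' is left open. Your fallback does not help either: you correctly showed that block-diagonality of both $Q$ and $gQg^{-1}$ forces $Q_w=L_BQL_B^{-1}$ to be diagonal. Giving $wQ_ww^{-1}$ off-diagonal entries would therefore violate hypothesis~(1).

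More importantly, the linear system you set up is the wrong one, so solving it would not give a valid $Q$.

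\textbf{Block sums.} Hypothesis~(1) concerns the block sums of $Q=L_B^{-1}Q_wL_B$ and of $gQg^{-1}=L_{A_x}(wQ_ww^{-1})L_{A_x}^{-1}$, not those of $Q_w$ and $wQ_ww^{-1}$. Conjugation by $\diag(B,I_2,B^*)$ does not preserve block sums. For instance, the first block of $Q$ is $B^{-1}\diag(q_1,q_2)B=\left(\begin{smallmatrix}q_2&0\\(q_1-q_2)y/2&q_1\end{smallmatrix}\right)$, while the middle block is unchanged.

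\textbf{Orthogonality.} Since $g^{-1}\partial_\zeta g=L_B^{-1}\partial_\zeta L_B$, one has $B(Q,L_B^{-1}\partial L_B)=B(Q_w,(\partial L_B)L_B^{-1})$, not $B(Q_w,L_B^{-1}\partial L_B)$. With the correct form, the $x$- and $y$-conditions are automatic, since the relevant logarithmic derivatives are nilpotent. Together with (i), the $\zeta$-condition forces $q_2=q_5=\ell/2$.

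\textbf{Consequences.} Take $Q_w=\diag(1,0,-1,1,0,-1)$. It satisfies your (i), your (ii), and these orthogonality conditions. Yet the resulting $Q$ has block sum $\diag(-2,2)$. Hence $gQ$ is not orthogonal to the orbit direction $g\,D_*(\diag(1,-1))$, where $\diag(1,-1)\in\kk$: the pairing is $-4$. Conversely, the paper's vector $Q_y=\diag(0,1,1,-1,-1,0)+\tfrac y2(E_{21}-E_{65})$ corresponds in your notation to $Q_w=\diag(1,0,1,-1,0,-1)$. This violates your (ii), since $q_1+q_3+q_5=2$.

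With the correct conditions, your diagonal ansatz does lead (up to a real scalar) to exactly this $Q_y$. What is missing is that choice and its verification. The paper checks the following.
\begin{itemize}
\item $gQ_yg^{-1}=-Q_x^t$, so the conjugate is again block diagonal with zero block sum.
\item $B(Q_y,Q_y)=4$.
\item $Q_y$ is orthogonal to the parameter derivatives. This holds because $Q_yg^{-1}$ is supported in the positions where $g_{24}=g_{42}=g_{35}=g_{53}=1$ and $g_{36}=g_{63}=0$, and these entries are constant along the family.
\end{itemize}
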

\begin{proof}
Parametrize $C^\times$ by the normalized matrices $g=g_\zeta(x,y)$
of~\eqref{eq:normalizedmatrix}, with $(\zeta,x,y)\in\C^\times\times\R^2$.
Put
\begin{equation}\label{eq:boundary-Qy}
 Q_y=\diag(0,1,1,-1,-1,0)+\frac y2(E_{21}-E_{65}).
\end{equation}
By~\eqref{eq:lie-algebra}, $Q_y\in\g$, with $\ell(Q_y)=0$. Its diagonal
blocks are
$\left(\begin{smallmatrix}0&0\\y/2&1\end{smallmatrix}\right)$,
$\diag(1,-1)$ and
$\left(\begin{smallmatrix}-1&0\\-y/2&0\end{smallmatrix}\right)$. Thus
the three diagonal blocks sum to zero, and $B(Q_y,Q_y)=1+2+1=4$.
From~\eqref{eq:normalizedmatrix} one computes
\begin{equation}\label{eq:Qconjugate}
 gQ_yg^{-1}=-Q_x^t,
\end{equation}
so $gQ_yg^{-1}$ is also block diagonal with its three blocks summing
to zero. Moreover
\[
 Q_yg^{-1}=E_{24}+E_{35}+\frac x2E_{36}-E_{42}-E_{53}-\frac y2E_{63},
\]
so for every tangent vector $\dot g$ to the family,
\[
 \tr(Q_yg^{-1}\dot g)=\dot g_{42}+\dot g_{53}+\frac x2\dot g_{63}
 -\dot g_{24}-\dot g_{35}-\frac y2\dot g_{36}=0.
\]
Indeed, the entries $g_{42}=g_{24}=g_{53}=g_{35}=1$ and
$g_{63}=g_{36}=0$ do not depend on $(\zeta,x,y)$.
Lemma~\ref{lem:metric-criterion} applies.
\end{proof}

\begin{lemma}\label{lem:secondU}
The layer $C_0\setminus Z$ is unipotently $\chi$-incompatible.
\end{lemma}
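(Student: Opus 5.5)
The plan is to follow the proof of Lemma~\ref{lem:firstU}, using the unipotent stabilizer family computed in Lemma~\ref{lem:3221-filtration} on $C_0$, and then apply Lemma~\ref{lem:unipotent-criterion}.

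\textbf{Parametrization.} I parametrize $C_0\setminus Z$ by $g(A,B)$ with $c=n=0$ and with $dq/a$ and $dq/m$ not both real. This is the restriction of the covering~\eqref{eq:cover} described before Lemma~\ref{lem:unipotent-criterion}, so it is a parametrization in the required sense. Here $a,d,m,q\ne0$.

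\textbf{Stabilizer family.} For $u_0,r\in E^-=i\R$ and $z\in\C$, I take the elements with $k_{22}=1$ from the proof of Lemma~\ref{lem:3221-filtration}:
\[
 k=\begin{pmatrix}1&u_0\\0&1\end{pmatrix},\qquad
 H_0=\begin{pmatrix}1&u_0d/a\\0&1\end{pmatrix},\qquad
 X=\begin{pmatrix}0&z\\0&qr/m\end{pmatrix},\qquad Y=re_{12}.
\]
They give $h=m(H_0,k)u(X,Y)$ together with $s=L_AhL_A^{-1}$ and $v=(L_B^{-1}whwL_B)^t$. These lie in $S$ and satisfy $sg(A,B)=g(A,B)v^t$.

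Both $s$ and $v$ are unipotent. The element $h$ has unipotent Levi part $m(H_0,k)$ and unipotent $U$-part. Also $\lambda(k)=1$, so $k^*$ is unipotent as well. Conjugates and transposes of unipotent matrices are unipotent.

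By~\eqref{eq:secondchar}, the character is
\[
 \chi(s,v^{-1})=\exp\Bigl(i\kappa\Tr\bigl(dq(r/m-u_0/a)\bigr)\Bigr).
\]
I take $z=0$ throughout.

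\textbf{Smooth choice of parameters.} The main step is to choose $(u_0,r)$ smoothly so that this character is $-1$ everywhere on the layer. Write $u_0=i\alpha$ and $r=i\beta$ with $\alpha,\beta\in\R$. The exponent becomes
\[
 i\kappa\bigl(\beta L_1-\alpha L_2\bigr),\qquad
 L_1=\Tr(i\,dq/m)=-2\Imm(dq/m),\quad L_2=\Tr(i\,dq/a)=-2\Imm(dq/a).
\]
On $C_0\setminus Z$ the vector $(L_1,L_2)$ is nonzero, since $dq/a$ and $dq/m$ are not both real. I set
\[
 (\beta,-\alpha)=\frac{\pi}{\kappa}\,\frac{(L_1,L_2)}{L_1^2+L_2^2}.
\]
This is smooth in $(A,B)$ and makes $\kappa(\beta L_1-\alpha L_2)=\pi$, so the character equals $-1$.

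The one point needing care is that $s$ and $v$ depend smoothly on $(A,B)$. This holds because their entries are rational in $a,d,m,q,u_0,r$, with nonvanishing denominators $a$ and $m$. Lemma~\ref{lem:unipotent-criterion} then gives the claim.
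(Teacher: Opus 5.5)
Your proposal is correct and is essentially the paper's proof. It uses the same parametrization, the same $k_{22}=1$ stabilizer family with $z=0$, the character formula \eqref{eq:secondchar}, and the same explicit choice of $(u_0,r)$ (your $(\alpha,\beta)$ equals the paper's $(\nu_1,\nu_2)$), followed by Lemma~\ref{lem:unipotent-criterion}; the only slip is that the third diagonal block of $m(H_0,k)$ is $\lambda(k)H_0^*$, not $k^*$, and it is unipotent because $\lambda(k)=1$ and $H_0$ is unipotent, so your unipotency argument still goes through.
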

\begin{proof}
Parametrize $C_0\setminus Z$ by $g(A,B)$ with $c=n=0$, where, with
$t=dq$, the numbers $t/a$ and $t/m$ are not both real. Recall that
$a,d,m,q\ne0$ on $C_0$. Put
\[
 \alpha_0=\Imm(t/a),\qquad\mu_0=\Imm(t/m),\qquad
 \Delta_0=\alpha_0^2+\mu_0^2>0.
\]
For $\nu_1,\nu_2\in\R$, let $s$ and $v$ be the elements defined
before~\eqref{eq:secondchar}, with $u_0=i\nu_1$, $r=i\nu_2$ and $z=0$.
They lie in $S$ and satisfy $sg(A,B)=g(A,B)v^t$. They are unipotent, because the
Levi blocks $H_0$ and $k$ used there are unipotent upper triangular.
By~\eqref{eq:secondchar},
\[
 \chi(s,v^{-1})=\psi\Bigl(\Tr\Bigl(t\Bigl(\frac{i\nu_2}m-\frac{i\nu_1}a
 \Bigr)\Bigr)\Bigr)
 =\exp\bigl(2i\kappa(\nu_1\alpha_0-\nu_2\mu_0)\bigr).
\]
The choice $\nu_1=\pi\alpha_0/(2\kappa\Delta_0)$ and
$\nu_2=-\pi\mu_0/(2\kappa\Delta_0)$ depends smoothly on $(A,B)$ and makes
this character equal to $e^{i\pi}=-1$.
Lemma~\ref{lem:unipotent-criterion} applies.
\end{proof}

\begin{lemma}\label{lem:Zmetric}
The layer $Z$ is metrically proper.
\end{lemma}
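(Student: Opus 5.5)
We apply Lemma~\ref{lem:metric-criterion} to the diagonal parametrization of Lemma~\ref{lem:3221-diagonal-form}. There
\[
 g_p=g(\diag(a,d),\diag(m,q))=L_{\diag(a,d)}\,w\,L_{\diag(m,q)},\qquad w=(1\ 4)(3\ 6),
\]
with $a,d,m,q\in\C^\times$ and $dq/a,\,dq/m\in\R^\times$. Since $L_A$, $L_B$ are diagonal and $w$ is a permutation matrix, $g_p$ is a monomial matrix. It therefore has exactly one nonzero entry in each row and column, supported on the pattern of $w$, with entries expressible in $a,d,m,q$ and $\bar a^{-1},\dots$ coming from the blocks $\lambda(k)a^*$.

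We look for $Q$ real diagonal (or purely imaginary diagonal) in $\g$ with $\ell(Q)=0$. Conjugating a diagonal matrix by a monomial matrix only permutes its entries, via the permutation $w$. So $g_pQg_p^{-1}=wQw$ is again diagonal and independent of $p$. Condition~(1) of Lemma~\ref{lem:metric-criterion} thus reduces to linear conditions on the six diagonal entries $q_1,\dots,q_6$:
\begin{itemize}
\item the block sums $q_1+q_3+q_5$ and $q_2+q_4+q_6$ vanish;
\item the same sums vanish after applying the permutation $(1\ 4)(3\ 6)$.
\end{itemize}
Membership in $\g$ with $\ell=0$ for $W=w_6$ forces $q_{7-i}=-\bar q_i$. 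A natural candidate is $Q=\diag(1,-1,-1,1,0,0)$ adjusted to satisfy the $\g$-constraint, or, mirroring $Q_y$ in Lemma~\ref{lem:Cmetric}, $Q=\diag(0,1,1,-1,-1,0)$. The latter has block sums $0+1$, $1-1$, $-1+0$ in the block-entry sense: its blocks are $\diag(0,1)$, $\diag(1,-1)$, $\diag(-1,0)$, which sum to zero. After conjugation by $w$ its entries become $(q_4,q_2,q_6,q_1,q_5,q_3)=(-1,1,0,0,-1,1)$, with blocks $\diag(-1,1)$, $\diag(0,0)$, $\diag(-1,1)$; these do not sum to zero. So we will instead solve the small linear system directly and pick a real solution with $B(Q,Q)=\sum q_i^2\neq0$. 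If no diagonal solution exists, we allow one off-diagonal correction as in $Q_y$.

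Condition~(2) requires $\Ree\tr(Qg_p^{-1}\dot g)=0$ for tangent vectors to the family. Since $\dot g$ is supported on the same monomial pattern as $g_p$, the product $g_p^{-1}\dot g$ is diagonal, with entries the logarithmic derivatives $\dot a/a,\dots$ and their conjugate-inverse counterparts. Condition~(2) therefore becomes a linear identity in $\Ree(\dot a/a),\Imm(\dot a/a),\dots$ constrained by the tangent conditions of $Z$, namely that $\Imm(\dot d/d+\dot q/q-\dot a/a)=0$ and similarly with $m$. We must choose $Q$ so that this pairing vanishes on the whole tangent space of the family. The main obstacle is compatibility. Real diagonal $Q$ pairs with $\Ree\log$-derivatives, which are unconstrained on $Z$, so a real $Q$ would have to annihilate them identically. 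The natural fix is to take $Q$ purely imaginary diagonal, $Q=i\,\diag(q_1,\dots,q_6)$ with $q_i$ real. Its pairing then sees only $\Imm$ of the log-derivatives, and we arrange that this pairing is a combination of the two defining constraints of $Z$. Here $B(Q,Q)=-\sum q_i^2\ne0$.

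The computation we expect to be the crux is solving simultaneously three linear systems on $(q_1,\dots,q_6)$:
\begin{itemize}
\item the $\g$-condition, which now reads $q_{7-i}=q_i$;
\item the block-sum conditions for $Q$ and for $wQw$;
\item proportionality of the pairing to the constraint functionals $dq/a$ and $dq/m$.
\end{itemize}
We will carry this out explicitly from the entries of $g_p$ and verify that a nonzero solution exists. Lemma~\ref{lem:metric-criterion} then gives metric properness of $Z$.
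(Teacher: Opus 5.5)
Your plan is the paper's proof. You use the diagonal family of Lemma~\ref{lem:3221-diagonal-form} and the fact that $g_p$ is monomial, so $g_pQg_p^{-1}=wQw$. You take $Q$ purely imaginary diagonal, so that condition~(2) sees only imaginary parts of logarithmic derivatives, and these are constrained by $dq/a,\,dq/m\in\R^\times$.

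The only thing missing is that you never produce $Q$, and you hedge about needing an off-diagonal correction. The system you set up closes at once:
\begin{itemize}
\item Write $Q=i\diag(q_1,q_2,q_3,q_3,q_2,q_1)$ with $q_j\in\R$; this is your $\g$-condition with $\ell(Q)=0$. Put $t=dq$.
\item The tangent vectors of the family satisfy
$g_p^{-1}\dot g=\diag\bigl(\dot m/m,\ \dot t/t,\ -\overline{\dot a/a},\ \dot a/a,\ -\overline{\dot t/t},\ -\overline{\dot m/m}\bigr)$.
\item Hence $B(Q,g_p^{-1}\dot g)=-2\bigl(q_1\Imm(\dot m/m)+q_2\Imm(\dot t/t)+q_3\Imm(\dot a/a)\bigr)$.
\item On the tangent space of $Z$ the three imaginary parts coincide, so this pairing equals $-2(q_1+q_2+q_3)\Imm(\dot a/a)$.
\item The three diagonal blocks of $Q$ sum to $i(q_1+q_2+q_3)I_2$, and so do those of $wQw$.
\end{itemize}

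So conditions~(1) and~(2) reduce to the single equation $q_1+q_2+q_3=0$. Any nonzero real solution works, since then $B(Q,Q)=-2(q_1^2+q_2^2+q_3^2)\ne0$. The paper's choice $Q_Z=i\diag(0,-1,1,1,-1,0)$ is the triple $(0,-1,1)$.
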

\begin{proof}
Parametrize $Z$ by the diagonal family~\eqref{eq:diagonalAB}: $g=g(A,B)$
with $A=\diag(a,d)$, $B=\diag(m,q)$ and $t/a,\,t/m\in\R^\times$, where
$t=dq$. Put
\begin{equation}\label{eq:boundary-QZ}
 Q_Z=i\diag(0,-1,1,1,-1,0).
\end{equation}
By~\eqref{eq:lie-algebra}, $Q_Z\in\g$. Its three $2\times2$ diagonal
blocks sum to zero, and $B(Q_Z,Q_Z)=-4$. Since $g$ is the permutation
matrix $w$ multiplied on both sides by diagonal matrices,
\[
 gQ_Zg^{-1}=wQ_Zw=i\diag(1,-1,0,0,-1,1),
\]
whose three $2\times2$ diagonal blocks also sum to zero. Moreover
\[
 Q_Zg^{-1}=i\bigl(a^{-1}E_{41}-t^{-1}E_{22}+\bar aE_{36}-\bar tE_{55}\bigr),
\]
and $g_{14}=a$, $g_{22}=t$, $g_{63}=\bar a^{-1}$, $g_{55}=\bar t^{-1}$.
Hence, for every tangent vector $\dot g$ to the family,
\[
 \tr(Q_Zg^{-1}\dot g)
 =i\left(\frac{\dot a}a-\frac{\dot t}t-\frac{\dot{\bar a}}{\bar a}
   +\frac{\dot{\bar t}}{\bar t}\right)
 =2\Imm\left(\frac{\dot t}t-\frac{\dot a}a\right)=0,
\]
since $\dot t/t-\dot a/a$ is the logarithmic derivative of the real
function $t/a$. Lemma~\ref{lem:metric-criterion} applies.
\end{proof}

\begin{proposition}\label{prop:boundary}
Let $\gamma\in\C$. Every $f\in\cgen_\chi(G';T)$ with $\Omega f=\gamma f$
vanishes.
\end{proposition}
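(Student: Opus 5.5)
The plan is to apply Lemma~\ref{lem:real-filtration} on the manifold $G'$ with the bi-invariant trace metric~\eqref{eq:metric}, the character $\chi$ of $\HH$, and the Laplacian-type operator $D=\Omega-\gamma$. The closed invariant subset is $T=G_{3221}$, which is closed in $G'$ by~\eqref{eq:open-chain}. The filtration is~\eqref{eq:filtration}, namely $T\supset C\supset C_0\supset Z\supset\varnothing$. By Lemma~\ref{lem:3221-filtration}, each member is $\HH$-stable and closed in $G'$. The successive differences $T\setminus C$, $C\setminus C_0$, $C_0\setminus Z$ and $Z$ are smooth embedded submanifolds of dimensions $35,33,31,29$. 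Since $\Omega-\gamma$ has the inverse metric as principal symbol, it is of Laplacian type, as remarked after Lemma~\ref{lem:metric-support}.

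It remains to check the hypothesis on each successive difference, which is exactly what the four preceding lemmas provide:
\begin{itemize}
\item $T\setminus C$ is unipotently $\chi$-incompatible by Lemma~\ref{lem:firstU};
\item $C\setminus C_0$ is metrically proper by Lemma~\ref{lem:Cmetric};
\item $C_0\setminus Z$ is unipotently $\chi$-incompatible by Lemma~\ref{lem:secondU};
\item $Z$ is metrically proper by Lemma~\ref{lem:Zmetric}.
\end{itemize}
Lemma~\ref{lem:real-filtration} then gives $\cgen_\chi(G';T)\cap\ker(\Omega-\gamma)=0$, which is the claim.

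The only point needing care is the bookkeeping in that lemma's induction, since it must be run inside the correct ambient open sets. Put $V_1=G'\setminus C$, $V_2=G'\setminus C_0$, $V_3=G'\setminus Z$ and $V_4=G'$. Then $T\setminus C$, $C\setminus C_0$, $C_0\setminus Z$ and $Z$ are closed in $V_1,\dots,V_4$ respectively, and each $V_j$ is $\HH$-invariant and open. Take $f\in\cgen_\chi(G';T)$ with $(\Omega-\gamma)f=0$. Its restriction to $V_1$ is supported on $T\setminus C$, so it vanishes by Lemma~\ref{lem:real-unipotent}. Next, $f|_{V_2}$ is supported on $C\setminus C_0$ and satisfies the Laplacian equation, so it vanishes by Lemma~\ref{lem:metric-support}. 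The same two lemmas dispose of $V_3$ and then $V_4$ in turn.

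The incompatibility and metric properness conditions were verified in the ambient manifold $G$ rather than in $V_j$. This causes no problem, because both notions are local and refer only to $T_gG$ at points of the layer. I do not expect a genuine obstacle: the substantive work is already contained in Lemmas~\ref{lem:firstU}--\ref{lem:Zmetric}.
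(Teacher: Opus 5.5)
Your proposal is correct and follows the paper's proof: it applies Lemma~\ref{lem:real-filtration} on $X=G'$ with $D=\Omega-\gamma$ along $T\supset C\supset C_0\supset Z\supset\varnothing$, and uses Lemmas~\ref{lem:firstU}, \ref{lem:Cmetric}, \ref{lem:secondU} and \ref{lem:Zmetric} for the four layers in that order. Your unwinding of the induction through the open sets $G'\setminus C$, $G'\setminus C_0$, $G'\setminus Z$, $G'$ is just the content of the filtration lemma, and you carry it out correctly.
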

\begin{proof}
We apply Lemma~\ref{lem:real-filtration}
\cite[Lemma~3.7]{JSZ} with
\[
 X=G',\qquad L=\HH,\qquad\theta=\chi,\qquad D=\Omega-\gamma.
\]
The operator $D$ is of Laplacian type for the metric~\eqref{eq:metric},
as explained after Lemma~\ref{lem:metric-support}. The filtration
$T\supset C\supset C_0\supset Z\supset\varnothing$ consists of closed
$\HH$-stable subsets of $G'$, and its successive differences are smooth.
They are unipotently $\chi$-incompatible
(Lemma~\ref{lem:firstU}), metrically proper (Lemma~\ref{lem:Cmetric}),
unipotently $\chi$-incompatible (Lemma~\ref{lem:secondU}) and metrically
proper (Lemma~\ref{lem:Zmetric}), respectively. All hypotheses of the
filtration lemma therefore hold. It removes the four differences in
this order, so the support of $f$ is contained successively in $C$,
$C_0$, $Z$ and the empty set. Hence $f=0$.
\end{proof}

\subsection{Proof of Theorem~\ref{thm:distribution} over
\texorpdfstring{$\R$}{R}}\label{subsec:real-proof}

Let $f$ be an $(\HH,\chi)$-equivariant tempered generalized function on
$G$ with $\Omega f=\gamma f$, and put $f^-=f-\tau^*f$. Transpose is a
Nash diffeomorphism of $G$, so $\tau^*f$ is tempered. Transpose
interchanges the two factors of $\HH$, which carry the same character
$\eta$, so $\tau^*f$ is again $(\HH,\chi)$-equivariant. It also commutes
with $\Omega$ (Section~\ref{subsec:real-tools}). Thus $f^-$ is a tempered
$(\HH,\chi)$-equivariant generalized function with $\tau^*f^-=-f^-$ and
$\Omega f^-=\gamma f^-$.

By Proposition~\ref{prop:open}, $f^-|_O=0$. If $H=J$, then $f^-|_{G'}$ is
supported in $T=G'\setminus O$, so Proposition~\ref{prop:boundary} gives
$f^-|_{G'}=0$, and Proposition~\ref{prop:casimir-extension}(1) gives
$f^-=0$. If $H=I_2$, Proposition~\ref{prop:casimir-extension}(2) gives
$f^-=0$ directly. In both cases $f=\tau^*f$, which proves the real case
of Theorem~\ref{thm:distribution}. Temperedness was used only through
Proposition~\ref{prop:open}.

\subsection{The multiplicity bound}\label{subsec:real-multiplicity}

Let $\pi$ be an irreducible Casselman--Wallach representation of $G$; the
$\Hom$ spaces below consist of continuous linear functionals. We apply
Proposition~\ref{prop:real-GK} \cite[Proposition~10.1]{JSZ} with
$G_0=G$, $\sigma=\tau$, and the subgroups and characters
in~\eqref{eq:GK-characters}. Its equivariance condition is
exactly~\eqref{eq:equiv}. Since $\Omega\in\mathcal Z_G$, every joint
eigenvector occurring in that criterion satisfies $\Omega f=\gamma f$
for some $\gamma\in\C$. Theorem~\ref{thm:distribution} therefore gives
the required $\tau$-invariance, and the criterion yields
\[
 \dim\Hom_S(\pi,\eta)\,
 \dim\Hom_{\tau(S)}(\pi^\vee,\eta\circ\alpha)\leq1,
 \qquad\alpha(g)=\tau(g)^{-1}.
\]
By Lemma~\ref{lem:dual}, $\pi^\vee\simeq\pi\circ\alpha$. Since $\alpha$
restricts to an isomorphism $\tau(S)\to S$, a continuous linear
functional on the space of $\pi$ is $(\tau(S),\eta\circ\alpha)$-equivariant
for $\pi\circ\alpha$ exactly when it is $(S,\eta)$-equivariant for $\pi$.
Hence
\[
 \Hom_{\tau(S)}(\pi^\vee,\eta\circ\alpha)
 \simeq\Hom_{\tau(S)}(\pi\circ\alpha,\eta\circ\alpha)
 =\Hom_S(\pi,\eta).
\]
Therefore $(\dim\Hom_S(\pi,\eta))^2\leq1$, which proves
Theorem~\ref{thm:main} over $\R$.

\end{document}